\documentclass{article}
\usepackage[a4paper,top=3cm,bottom=3cm,left=2.1cm,right=2.1cm,marginparwidth=1.75cm]{geometry}

\setlength\parindent{0pt}

\usepackage{bm}
\usepackage{amsmath, amsfonts, amssymb, amsthm}
\usepackage{mathrsfs, dsfont}
\usepackage[dvipsnames]{xcolor}
\usepackage{graphicx}
\usepackage{verbatim}
\usepackage{float}
\usepackage{comment}
\usepackage{multicol}
\usepackage{ulem}
\usepackage{subfig}
\usepackage{bigints}
\def\twoplotswidth{0.48\linewidth}

\usepackage[colorlinks=true, allcolors=blue]{hyperref}
\usepackage{xparse} 
\usepackage{hyperref}
\usepackage{tasks}
\settasks{style=itemize}
\usepackage{enumitem}

\usepackage[numbers]{natbib}

\newtheorem{theorem}{Theorem}[section]
\newtheorem{lemma}[theorem]{Lemma}
\newtheorem{definition}[theorem]{Definition}

\newtheorem{proposition}[theorem]{Proposition}

\newtheorem{corollary}[theorem]{Corollary}

\theoremstyle{remark}
\newtheorem{remark}{Remark}[section]
\newtheorem{example}[remark]{Example}

\numberwithin{equation}{section}

\newenvironment{sqremark}{\begin{remark}}{\hfill \tiny $\blacksquare$ \end{remark}}
\newenvironment{sqexample}{\begin{example}}{\hfill \tiny $\blacksquare$ \end{example}}

\usepackage{pifont}

\newcommand{\R}{\mathbb{R}}
\newcommand{\N}{\mathbb{N}}

\newcommand{\E}{\mathbb{E}}

\def\d{\mathrm{d}}
\def\P{\mathbb{P}} 
\newcommand{\F}{\mathcal{F}}

\newcommand{\A}{\mathcal{A}}
\newcommand{\I}{\mathcal{I}}
\newcommand{\Ah}{{\A_h}}
\newcommand{\Aexp}{{\A_{\exp}}}

\newcommand{\set}[1]{\left\{#1 \right\}}

\newcommand{\bra}[1]{\left(#1 \right)}
\newcommand{\sqbra}[1]{\left[#1 \right]}
\newcommand{\inverse}[1]{\coninv{\bra{#1}}}
\newcommand{\res}[1]{\inverse{\emptyword - #1}}

\newcommand{\abs}[1]{\left|#1 \right|}
\newcommand{\norm}[1]{\abs{\abs{#1}}}
\newcommand{\normA}[2][t]{\norm{#2}_{#1}^\A}
\newcommand{\normh}[2][t]{\norm{#2}_{#1}^\Ah}


\newcommand{\indic}[1]{\mathds{1}_{\left\{ #1 \right\}}}

\newcommand{\alphabet}[1][d]{A_{#1}}
\newcommand{\TA}[1][d]{T(\R^{#1})}
\newcommand{\eTA}[1][d]{T((\R^{#1}))}
\newcommand{\tTA}[2][d]{T^{#2}(\R^{#1})}

\newcommand{\emptyword}{{\color{NavyBlue} \textup{\textbf{\o{}}}}}
\newcommand{\word}[1]{{\mathcolor{NavyBlue}{\mathbf{#1}}}}
\newcommand{\proj}[1]{|_{\word{#1}}}

\newcommand{\bp}{\bm{p}}
\newcommand{\bq}{\bm{q}}

\newcommand{\wv}{\word{v}}
\newcommand{\wu}{\word{u}}
\newcommand{\ww}{\word{w}}
\newcommand{\wi}{\word{i}}
\newcommand{\wj}{\word{j}}


\newcommand{\bgamma}{\bm{\gamma}}

\newcommand{\bell}{\bm{\ell}}
\newcommand{\bphi}{\bm{\varphi}}

\newcommand{\lgv}[1][t]{\bell_{#1}^\textnormal{GV}}

\newcommand{\lde}{\bell^\textnormal{DE}}
\newcommand{\pde}{\bp^\textnormal{DE}}
\newcommand{\qde}{\bq^\textnormal{DE}}

\newcommand{\lvol}{\bell^\textnormal{VOL}}
\newcommand{\pvol}{\bp^\textnormal{VOL}}
\newcommand{\qvol}{\bq^\textnormal{VOL}}

\newcommand{\shuprod}{\mathrel{\sqcup \mkern -3mu \sqcup}}
\newcommand{\shupow}[1]{^{\shuprod #1}}
\newcommand{\shuexp}[1]{e \shupow{#1}}
\newcommand{\conpow}[1]{^{\otimes #1}}
\newcommand{\conexp}[1]{e \conpow{#1}}
\newcommand{\coninv}[1]{{#1}^{-1}}
\newcommand{\dominated}{\preceq}
\newcommand{\fawcett}[1][t]{\bm{\mathcal{E}}_{#1}}

\NewDocumentCommand{\sigX}{O{t} O{X}}{\mathbb{#2}_{#1}}
\NewDocumentCommand{\sighat}{O{t} O{W}}{\widehat{\mathbb{#2}}_{#1}}
\NewDocumentCommand{\sigtilde}{O{t} O{W}}{\widetilde{\mathbb{#2}}_{#1}}
\NewDocumentCommand{\sig}{O{t} O{W}}{\sighat[#1][#2]}

\newcommand{\bracket}[2]{\left \langle #1, #2 \right \rangle}
\NewDocumentCommand{\bracketsigX}{O{t} O{X} m}{\bracket{#3}{\sigX[#1][#2]}}
\NewDocumentCommand{\bracketsig}{O{t} O{W} m}{\bracket{#3}{\sig[#1][#2]}}
\NewDocumentCommand{\bracketsigtrunc}{O{M} O{t} O{W} m}{\bracket{#4}{\sig[#2][#3]^{\leq #1}}}
\NewDocumentCommand{\bracketsigtilde}{O{t} O{W} m}{\bracket{#3}{\sigtilde[#1][#2]}}

\newcommand{\half}{\frac{1}{2}}
\newcommand{\thalf}{\tfrac{1}{2}}

\usepackage{mathtools}
\usepackage{shuffle}
\usepackage{bbm}
\usepackage{authblk}
\mathtoolsset{showonlyrefs}
\typeout{get arXiv to do 4 passes: Label(s) may have changed. Rerun}

\newcommand{\revone}{}
\newcommand{\revtwo}{}

\title{Path-dependent processes from signatures}

\author[1]{Eduardo Abi Jaber\thanks{eduardo.abi-jaber@polytechnique.edu. The first author is grateful for the financial support from the Chaires FiME-FDD, Financial Risks, Deep Finance \& Statistics and Machine Learning and systematic methods in finance at Ecole Polytechnique.}}
\author[2,3]{Louis-Amand Gérard\thanks{louis-amand.gerard@etu.univ-paris1.fr}}
\author[1]{Yuxing Huang\thanks{yuxing.math@gmail.com. \\ The authors would like to thank Xuyang Lin for fruitful discussions and the two anonymous referees for their insightful suggestions.}}
\affil[1]{Ecole Polytechnique, CMAP}
\affil[2]{Université Paris 1 Panthéon-Sorbonne, CES}
\affil[3]{Gefip}

\begin{document}

\maketitle

\begin{abstract}
    We provide explicit series expansions to certain stochastic path-dependent integral equations in terms of the path signature of the time augmented driving Brownian motion. Our framework encompasses a large class of stochastic linear Volterra and delay equations and in particular the fractional Brownian motion with a Hurst index $H \in (0, 1)$. Our expressions allow to disentangle an infinite dimensional Markovian structure and open the door to straightforward and simple approximation schemes, that we illustrate numerically. {\revone A key application is the derivation of explicit series representations for both conditional and unconditional moments.}
\end{abstract}

\begin{description}
\item[Mathematics Subject Classification (2010):] 60L10, 60L70, 60H20, 60G22 
\item[Keywords:] Path-signatures, Volterra processes, Stochastic Delay equations, fractional Brownian motion.
\end{description}

\section{Introduction}

    We provide explicit solutions to certain stochastic path-dependent integral equations in the form 
    $$ X_t = X_0 + \int_0^t b(t, s, (X_u)_{{\revtwo 0 \leq} u \leq s}) \d s + \int_0^t \sigma(t, s, (X_u)_{{\revtwo 0 \leq} u \leq s}) \d W_s, $$
    for some linear coefficients $b, \sigma$, in terms of the path signature of the time-augmented driving Brownian motion $(t, W_t)_{t \geq 0}$. \\
    
    We consider two non-Markovian specifications:
    \begin{itemize}
        \item \textbf{The Volterra case}, where the coefficients $b, \sigma$ are of the form 
        $$ g(t, s, (X_u)_{{\revtwo 0 \leq} u \leq s}) = K(t-s) (a + b X_s), $$
        for some locally integrable deterministic kernel $K$ and $a, b \in \R$. The Riemann-Liouville Fractional Brownian motion and Gaussian Volterra processes constitute a particular case. 
        
        \item \textbf{The Delay case}, where the coefficients $b, \sigma$ are of the form 
        $$ g(t, s, (X_u)_{{\revtwo 0 \leq} u \leq s}) = a + b X_s + \int_0^s K(s-u) X_u \d u.$$
    \end{itemize}
    
    In both cases, the unique strong solution $X$ is a non-anticipative measurable function of the whole path of the driving Brownian motion, that is 
    
    $$ X_t = f(t, (W_u)_{{\revtwo 0 \leq} u \leq t}), \quad t \geq 0, $$
    for some measurable functional $f$.  \\
    
    Our main contribution is to explicit this functional $f$. More precisely, we show that the solution $X$ can be written in terms of an infinite linear combination of the signature process $\sig$ of the time extended Brownian motion $\widehat{W}_t := (t, W_t)$ defined by the infinite sequence of iterated integrals in the Stratonovich sense:
    \begin{align*}
        \sig &
        = \left( 1,
        \begin{pmatrix}
            t \\ W_t
        \end{pmatrix},
        \begin{pmatrix}
            \frac{t^2}{2!} & \int_0^t s \d W_s \\
            \int_0^t W_s \d s & \frac{W_t^2}{2!}
        \end{pmatrix},
        \begin{pmatrix}
            \frac{t^3}{3!} & \int_0^t \int_0^s u \d  W_u \d s & \\
            \int_0^t \int_0^s W_u \d u \d s & \frac{1}{2!} \int_0^t W_s^2 \d s & \\
            & \frac{1}{2!} \int_0^t s^2 \d W_s & \int_0^t \int_0^s u \d W_u \circ \d W_s \\
            & \int_0^t \int_0^s W_u du \circ \d W_s & \frac{W_t^3}{3!}
        \end{pmatrix},
       \cdots \right),
    \end{align*}
    that is
    \begin{align}\label{eq:brackintro}
         X_t = \bracketsig{\bell_t}, \quad t \geq 0,
    \end{align}
    where the (possibly) time-dependent deterministic coefficients $\bell_t$, given by a sequence of {\revtwo numbers indexed by words}, are explicit. \\
    
    To highlight our strategy, consider a generalized geometric Brownian motion process $X$ solution to
    \begin{align} \label{eq:introOU}
        X_t = x + \int_0^t (a + bX_s) \d s + \int_0^t (\alpha + \beta X_s) \circ \d W_s, \quad \text{with } x, a, b, \alpha, \beta \in \R,
    \end{align}
    where $\circ$ denotes Stratonovich integration rule. \\

\textbf{Our recipe:}
    \begin{enumerate}
        \item Translate the probabilistic equation into an algebraic linear equation on \( \bell \):
        \begin{align} \label{eq:introlinear}
            \bell = x \emptyword + (a  +b \bell )\word{1} +  (\alpha + \beta \bell) \word{2},
        \end{align}
        where concatenation with the letters $\word{1}$ and $\word{2}$  play respectively the role of integration with respect to $\d t$ and $\d W_t$ (in the sense of Stratonovich).
        
        \item Solve the linear algebraic equation \eqref{eq:introlinear} by writing  
        \begin{align} \label{eq:introlinear2}
            \bell = \bp + \bell \bq,    
        \end{align}
        with 
        \begin{align} 
            \bp = (x \emptyword + a\word{1} + \alpha \word{2})\quad \text{and} \quad \bq = b \word{1} + \beta \word{2},
        \end{align}
        to get
        \begin{align} 
            \bell =  \bp \res{\bq} = \bp \sum_{n \geq 0} \bq^{\otimes n}.
        \end{align}
        
        \item Argue that the infinite series $\langle \bell, \sig \rangle$ is well-defined and solves the stochastic differential equation \eqref{eq:introOU}. 
    \end{enumerate}

    The recipe is simple yet powerful, as it remains robust across non-Markovian settings and for multi-dimensional Brownian motions, with the same algebraic equation \eqref{eq:introlinear2} but with different $\bp$ and $\bq$. To make the analysis rigorous, we proceed as follows. First, we study the existence of solutions of the algebraic equation \eqref{eq:introlinear} taking values in the extended tensor algebra together with some estimates for the solutions, see Proposition~\ref{prop:resolvent-solution} and Theorem~\ref{thm:ell_in_Sh_if_q_dominated}. Second, since $\bell$ can have infinitely many non-zero elements, a delicate analysis is needed to make sense of the series expansions in \eqref{eq:brackintro}, this is the object of Section~\ref{subsec:Sh}. In particular, we derive a tractable criterion for the global convergence in the supremum norm which is new compared to the related literature on infinite series of signature elements, see for instance \citet*{arous1989flots, cuchiero2023signature}. 
    The third step, a central ingredient for the derivation of our representation formulas in Section~\ref{sec:representations} is an Itô's formula for possibly infinite linear combinations of signature elements, see Theorem~\ref{thm:sig_ito_formula}. {\revtwo Finally, in Section~\ref{subsec:Aexp} we derive a tractable criterion based on the domination by a shuffle exponential of $\bp$ and $\bq$ that validates the aforementioned steps, i.e.~that the linear functional is well-defined, converges against the signature and is an Itô process.} \\       
    
    We provide two types of representations: (i) time-independent representations \eqref{eq:brackintro} for non-Markovian stochastic Volterra and delay equations with smooth enough kernels $K$ where the coefficients $\bell$ do not depend on time, see Theorems~\ref{thm:volterra} and \ref{thm:delayed}. Such $\bell$ would in principal correspond to the coefficients of a {\revtwo Stochastic}--Taylor expansion applied in a path-dependent setting, except that here we compute $\bell$ algebraically and establish the convergence of the Taylor series. For more general kernels, we provide approximation results in Corollary~\ref{corr:convergence-K^n}; but also (ii) time-dependent representations \eqref{eq:brackintro} with $\bell_t$ depending on time which allowed us to recover more general Gaussian processes in Theorem~\ref{thm:gaussian} with singular kernels. For instance, for an Ornstein-Uhlenbeck process, a time-dependent representation involves 
    $$ \bell_t^\textnormal{OU} = \shuexp{-\kappa (t \emptyword - \word{1})} \word{2}, $$
    which is the algebraic translation of the probabilistic expression 
    $$ X_t = \int_0^t e^{-\kappa (t-s)} \d W_s, $$
    with $\shuexp{}$ the shuffle exponential \eqref{nota:shuexp}. \\
    
    The representation \eqref{eq:brackintro} has at least two main features. First, it allows to disentangle an infinite dimensional Markovian structure in terms of the process $\sig$. Second it opens the door to straightforward and simple approximation schemes by truncating $\bell_t$, numerical illustrations are provided in Section~\ref{subsec:numerics}. {\revone A key application is the derivation of explicit series representations for both conditional and unconditional moments of Volterra and delay processes,   which can be efficiently computed numerically without relying on Monte Carlo simulations, as shown in Section~\ref{sec:conditional_moments}. In particular, the linear Volterra process is a polynomial Volterra process in the terminology of \citet{polynomial_volterra}, where unconditional moments were derived in terms of deterministic integral equations, without any treatment of conditional moments or numerical methods. Our results provide an alternative explicit representation for both conditional and unconditional moments and, most importantly, offer the first numerical implementation for such process.
 }
    \\

    \textbf{Motivation.} The elegant framework of path signatures, as introduced by \citet{chen1957integration} and extensively used in rough path theory \cite{friz_victoir_2010, lyons2014rough}, not only captures the intricate dynamics of path-dependent processes but also facilitates their integration into various fields of applications. {\revone A notable attribute driving the recent prominence of signatures is their inherent {linearization property}, or equivalently the universal approximation property of linear functionals of the signature, and the fact that signatures characterize paths, see \cite{chevyrev2022signaturemomentscharacterizelaws, giusti2022topologicalapproachmappingspace}.} These properties have found many practical applications in Machine Learning (\citet*{chevyrev2016primer, fermanian2021embedding, arribas-bipolardisorder}), Mathematical Finance (\citet*{sig_vol, bayer2023optimal, bayer2023primal, arribas-optiexec, cuchiero2023spvix, dupire2022functional, arribas-nonparam})... \\

    Our motivation for studying exact representation formulae in the form \eqref{eq:brackintro} for path-dependent processes arises from the importance of non-Markovian stochastic modeling in various fields. Specifically, stochastic Volterra and delay equations capture intricate temporal dependencies in time series data. For such non-Markovian processes, the representation formulae \eqref{eq:brackintro} not only reveal the underlying Markovian structure in the whole signature process $\sig$ but also enable us to re-frame them within a rather universal framework of linear functionals of the signature, for which one can develop generic methods. As illustration, recent works by \citet*{sig_vol, cuchiero2023signature} have demonstrated interest in infinite linear combinations of signature elements, unveiling a surprising infinite-dimensional affine structure underlying a broad spectrum of stochastic models. Certain Fourier-Laplace transforms of linear functionals of the signature can be computed in terms of non-standard tensor valued infinite-dimensional Riccati equations  and applied in practice for risk management purposes in mathematical finance for instance.
    \\

    \textbf{Related literature on series and functional expansions.} {\revtwo Stochastic Taylor expansions} of solutions to Markovian stochastic differential equations (SDEs) in terms of the signature of the time augmented driving Brownian motion $(t, W_t)_{t \geq 0}$ have appeared in \citet[Chapter 5]{kloeden1992stochastic} and have been used to develop a cubature formula on the Wiener space by \citet{lyons-smg}. \citet{arous1989flots} establishes convergence criterion for such stochastic Taylor series for Markovian SDEs with analytic coefficients; the so-called Chen–Fliess \cite{fliess1983concept,fliess1986vers} approximation that generalizes the classic stochastic Taylor expansion for path dependent functionals of Markovian SDEs is studied by \citet{litterer2014chen}. A functional Taylor expansion in terms of a finite series of higher order functional derivatives multiplied by iterated integrals is presented by \citet{dupire2022functional}, a nice overview on series and functional expansions is given there. Variation of constants formulae for linear forward and backward stochastic Volterra integral equations using certain sequence of iterated Volterra integrals, adapting Wiener–Itô chaos expansions, are derived by \citet{hamaguchi2023variation}. {\revone Related  finite order expansions also appeared in \citet*{bayer2017regularitystructureroughvolatility}, \citet{Hu01102013} and \citet*{neuenkirch2006treesasymptoticdevelopmentsfractional}.}\\

     Our expansions can be seen as an infinite Stochastic-Taylor series expansion of the solution of path-dependent stochastic equations whose coefficients depend linearly on the whole trajectory. In particular, we believe that the coefficients of our time-independent expansions, obtained in `one shot' by solving an algebraic equation, can be recovered from successive functional derivatives of the coefficients of the stochastic equation. None of the works cited above rely on such algebraic method. When it comes to studying the convergence of the stochastic Taylor series, our work is closest in spirit to \citet{arous1989flots} which operates in a Markovian setting. Just like Taylor expansions \cite{kloeden1992stochastic,dupire2022functional}, and unlike Volterra and Wiener expansions, the coefficients $\bell$ that depend on the model parameters and the signature $\sig$ of the driving noise are disentangled. Combined with the explicit knowledge of the coefficients, this allows for a straightforward and simple numerical implementation of non-Markovian processes. \\

{\revone Moreover, finite expansions of Volterra and delay processes have also appeared in the rough path literature, using tailor-made constructions of signature objects. 
We mention, for instance, {Volterra-type signatures}, where the iterated integrals involve weights coming from certain kernels, as developed in \citet{deya2008roughvolterraequations1, tindel2008roughvolterraequations2, harang2021volterraequationsdrivenrough, harang2022volterraequationsdrivenrough, 10.1214/22-EJP890}, as well as the {discrete delay signature} introduced in \citet*{neuenkirch2007delayequationsdrivenrough}.  Compared to these works, in the present paper we do not rely on such specialized signature constructions. 
Instead, we leverage the standard (time-augmented) signature, which offers several advantages: 
it provides a unified and versatile framework for expanding both Volterra and delay processes, 
it preserves the classical Chen identity and shuffle product, 
and it yields a Markovian lift of our inherently non-Markovian dynamics, making it particularly convenient for computations (e.g.~conditional moments in Section~\ref{sec:conditional_moments}).    Beyond these considerations, we stress that our contribution goes significantly further than previous works that focus primarily on formal expansions. While existing literature, including Volterra-based approaches, typically produces {finite} stochastic Taylor expansions, we {rigorously establish the convergence of the infinite signature expansion} for the class of processes we consider.
}\\

    \textbf{Outline.} Section~\ref{S:prel} deals with some preliminaries on path-signatures. Section~\ref{sec:inf_bracket_sig} is concerned with infinite linear combinations of signature elements and their properties. Section~\ref{sec:representations} collects our representations formulae together with a numerical illustration. Section~\ref{sec:conditional_moments} presents series expansions for the moments. The proofs are postponed to later sections and to the Appendices.

\section{Preliminaries}\label{S:prel}

    In this section, we setup the framework for dealing with signatures of semimartingales. We not only collect some (well-known) properties of finite linear combinations of signature elements, refer also to the first sections in \cite{bayer2023optimal,cuchiero2023signature,arribas-nonparam}, but we also introduce and study the resolvent object in Section~\ref{subsec:resolvent} which plays a crucial role in our main results.

\subsection{Tensor algebra}

    Let $d \in \N$ and denote by $\otimes$ the tensor product over $\R^d$, e.g. $(x \otimes y \otimes z)_{ijk} = x_i y_j z_k$, for $i,j,k = 1, \dots, d$, for $x,y,z \in \R^d$. For $n \geq 1$, we denote by $(\R^d) \conpow{n}$ the space of tensors of order $n$ and by $(\R^d) \conpow{0} = \R$. In the sequel, we will consider mathematical objects, path signatures, that live on the extended tensor algebra space $\eTA $ over $\R^d$, that is the space of (infinite) sequences of tensors defined by
    $$ \eTA := \left\{ \bell = (\bell^n)_{n=0}^\infty : \bell^n \in (\R^d) \conpow{n} \right\}. $$
    Similarly, for $M \geq 0$, we define the truncated tensor algebra $\tTA{M}$ as the space of sequences of tensors of order at most $M$ defined by
    $$ \tTA{M} := \left\{ \bell \in \eTA : \bell^n = 0, \quad \forall n > M \right\},$$
    and the tensor algebra $\TA$ as the space of all finite sequences of tensors defined by
    $$ \TA := \bigcup_{M \in \N } \tTA{M}. $$
    We clearly have $\TA \subset \eTA$. For $\bell = (\bell^n)_{n \in \N}, \bphi = (\bphi^n)_{n \in \N} \in \eTA$ and $\lambda \in \R$, we define the following operations:
    \begin{align*}
        \bell + \bphi :&
        = (\bell^n + \bphi^n)_{n\in \N}, \quad  \bell \otimes \bphi :
        = \left( \sum_{k=0}^n \bell^k \otimes \bphi^{n-k} \right)_{n \in \N}, \quad 
        \lambda \bell :
        = (\lambda \bell^n)_{n \in \N}.
    \end{align*}
    For the rest of the paper, we will use $\bell \bphi$ and $\bell \otimes \bphi$ interchangeably.
    These operations are closed on $\tTA{M}$ and $\TA$. \\
    
    \textbf{Important notations.} Let $\set{e_1, \dots, e_d} \subset \R^d$ be the canonical basis of $\R^d$ and $\alphabet = \set{\word{1}, \word{2}, \dots, \word{d}}$ be the corresponding alphabet. To ease reading, for $i \in \set{1, \dots, d}$, we write $e_{i}$ as the blue letter $\word{i}$ and for $n \geq 1,$ $i_1, \dots, i_n \in \set{1, \dots, d}$, we write $e_{i_1} \otimes \cdots \otimes e_{i_n} $ as the concatenation of letters $\word{i_1 \cdots i_n}$, that we call a word of length $n$. We note that the family $(e_{i_1} \otimes \cdots \otimes e_{i_n})_{(i_1, \dots, i_n) \in \set{1, \dots, d}^n}$ is a basis of $(\R^d) \conpow{n}$ that can be identified with the set of words of length $n$ defined by 
    \begin{equation} \label{def:sig_basis}
        V_n := \set{\word{i_1 \cdots i_n}: \word{i_k} \in \alphabet \text{ for } k = 1, 2, \dots, n}. 
    \end{equation} 
    
    Moreover, we denote by $\emptyword$ the empty word and we set $V_0 = \set{\emptyword}$ which serves as a basis for $(\R^d) \conpow{0} = \R$. {\revtwo Equipped with the product topology, $\eTA$ admits $V := \bigcup_{n\ge 0} V_n$ as a Schauder topological basis.} In particular, every $\bell \in \eTA$ can be decomposed as
    \begin{equation} \label{eq:sig_expansion}
        \bell = \sum_{n=0}^{\infty} \sum_{\word{v} \in V_n} \bell^{\word{v}} \word{v},
    \end{equation}
    where $\bell^{\word{v}} \in \R$ is the real coefficient of $\bell$ at coordinate $\word{v}$. {\revtwo Representation \eqref{eq:sig_expansion} induces a natural linear homeomorphism from $\eTA$ to $\R^\N$, showing that it is a locally convex space.} This representation will be frequently used in the paper. We stress again that in the sequel, every blue `word' $\word{v} \in V$ represents an element of the canonical basis of $\eTA$, i.e. there exists $n \geq 0$ such that $\word{v}$ is of the form $\word{v} = \word{i_1 \cdots i_n}$, which represents the element $e_{i_1} \otimes \cdots \otimes e_{i_n} $. The concatenation $\bell \word{v}$ of an element $\bell \in \eTA$ and the word $\word{v} = \word{i_1 \cdots i_n}$ means $\bell \otimes e_{i_1} \otimes \cdots \otimes e_{i_n}$.
    
    In addition to the decomposition \eqref{eq:sig_expansion} of elements $\bell \in \eTA$, we introduce the projection $\bell \proj{u} \in \eTA$ as
    \begin{equation} \label{eq:projection}
        \bell \proj{u} := \sum_{n=0}^{\infty} \sum_{\word{v} \in V_n} \bell^{\word{vu}} \word{v}
    \end{equation}
    for all $\word{u} \in V$. The projection plays an important role in the space of iterated integrals as it is closely linked to partial differentiation, in contrast with the concatenation that relates to integration. It will be used throughout the paper.
    
    \begin{sqremark} \label{rem:proj-decomposition}
        The projection allows us to decompose elements of the extended tensor algebra $\bell \in \eTA$ as 
        \begin{align}
            \bell = \bell^\emptyword \emptyword + \sum_{\word{i} \in \alphabet} \bell \proj{i} \word{i}.
        \end{align}
        For instance, for $\bell = 4 \cdot \emptyword + 3 \cdot \word{1} - 1 \cdot \word{12} + 2 \cdot \word{2212}$, we have that  $\bell^{\emptyword} = 4$, $\bell \proj{1} = 3 \cdot \emptyword$, $\bell \proj{2} = - 1 \cdot \word{1} + 2 \cdot \word{221}$
        and $\bell \proj{3} = 0$.
    \end{sqremark}
   
    We define the bracket between $\bell \in \TA$ and ${\revtwo \mathbb{X}} \in \eTA$ by
    \begin{align} \label{eq:dualdef}
        \langle \bell, {\revtwo \mathbb{X}} \rangle 
        = \sum_{n=0}^{\infty} \sum_{\word{v} \in V_n} \bell^\word{v} {\revtwo \mathbb{X}}^\word{v}. 
    \end{align}
    Notice that it is well-defined as $\bell$ has finitely many non-zero terms. For $\bell \in \eTA$, the series in \eqref{eq:dualdef} involves infinitely many terms and requires special care, this will be discussed in Section \ref{sec:inf_bracket_sig} below.
  
    We will also consider another operation on the space of words $V$, the shuffle product. The shuffle product plays a crucial role for an integration by parts formula on the space of iterated integrals, see Lemma~\ref{lem:shuffleproperty} below.
    
    \begin{definition}[Shuffle product] \label{def:shuffleprod}
        The shuffle product $\shuprod: V \times V \to \TA$ is defined inductively for all words $\word{v}$ and $\word{w}$ and all letters $\word{i}$ and $\word{j}$ in $\alphabet$ by
        \begin{align*}
            (\word{v} \word{i}) \shuprod (\word{w} \word{j}) &
            = (\word{v} \shuprod (\word{w} \word{j})) \word{i} + ((\word{v} \word{i}) \shuprod \word{w}) \word{j}
            \\ \word{w} \shuprod \emptyword &
            = \emptyword \shuprod \word{w} = \word{w}.
        \end{align*}
        
        With some abuse of notation, the shuffle product on $\eTA$ induced by the shuffle product on $V$ will also be denoted by $\shuprod$. The shuffle product is clearly commutative. See \cite{gainesshuffle,reeshuffles} for more information on the shuffle product.
    \end{definition}
    
    The shuffle product corresponds to the shuffling of two decks of cards together, while keeping the order of each single deck as illustrated on the following example: $\word{1} \shuprod \word{23} =  \word{123} + \word{213} + \word{231}$.

\subsection{Resolvent and linear equation}\label{subsec:resolvent}

    For $n \in \N$ and $\bell \in \eTA$, we define the concatenation power of $\bell$ by
    $$ \bell \conpow{n} := \overbrace{\bell \otimes \bell \otimes \cdots \otimes \bell}^{\text{$n$ times}}, $$
    with the convention $\bell \conpow{0} = \emptyword$.

    For every $\bell \in \eTA$ such that $\bell^\emptyword = 0$, we define its \textit{resolvent} by
    \begin{equation} \label{nota:resolvent}
        \inverse{\emptyword - \bell} = \sum_{n=0}^\infty \bell \conpow{n}.
    \end{equation}

    {\revtwo
    \begin{sqremark} \label{rmk:resolvent_scaled}
        The condition $\bell^\emptyword = 0$ can easily be loosened to $\bell^\emptyword \neq 1$ by scaling and shifting $\bell$, i.e.
        \begin{equation}
            \inverse{\emptyword - \bell} = \frac{1}{1 - \bell^\emptyword} \inverse{\emptyword - \frac{1}{1 - \bell^\emptyword} \bra{\bell - \bell^\emptyword \emptyword}} = \sum_{n=0}^\infty \frac{1}{\bra{1 - \bell^\emptyword}^{n+1}} \bra{\bell - \bell^\emptyword \emptyword} \conpow{n}.
        \end{equation}
        We thus write $\res{\bell}$ for all $\bell \in \eTA$ such that $\bell^\emptyword \neq 1$.
    \end{sqremark}
    }
    
    \begin{sqremark} \label{rmk:resolvent_is_inverse}
        It is clear that $\bra{\emptyword - \bell} \inverse{\emptyword - \bell} = \inverse{\emptyword - \bell} \bra{\emptyword - \bell} = \emptyword$.
    \end{sqremark}
    
    The purpose of the resolvent is to solve linear algebraic equations as shown in the next proposition.
    \begin{proposition} \label{prop:resolvent-solution}
        Let $\bp, \bq \in \eTA$ such that $\bq^\emptyword \neq 1$, then the unique solution $\bell \in \eTA$ to the linear algebraic equation 
        \begin{align} \label{eq:resolvent}
            \bell = \bp + \bell \bq
        \end{align}
        is given by 
        \begin{align}
            \bell = \bp \inverse{\emptyword - \bq}.
        \end{align}
    \end{proposition}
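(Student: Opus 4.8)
The plan is to proceed in three steps: (i) check that the proposed object $\bp \inverse{\emptyword - \bq}$ is a genuine element of $\eTA$, (ii) verify that it solves \eqref{eq:resolvent}, and (iii) establish uniqueness by a grading argument. For step (i), since $\abs{\bq^\emptyword} < 1$, Lemma~\ref{apn:resolvent} guarantees that the resolvent $\inverse{\emptyword - \bq} = \sum_{n \geq 0} \bq \conpow{n}$ is well defined, i.e.\ each of its tensor components is finite. Because the concatenation product is an internal operation on $\eTA$ — the order-$n$ component of $\bell \otimes \bp$ is the \emph{finite} sum $\sum_{k=0}^n \bell^k \otimes \bp^{n-k}$ — the element $\bell := \bp \inverse{\emptyword - \bq}$ indeed lies in $\eTA$.

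For step (ii), I would use Remark~\ref{rmk:resolvent_is_inverse}, which says that $\inverse{\emptyword - \bq}$ is a two-sided inverse of $\emptyword - \bq$, together with associativity and distributivity of $\otimes$ on $\eTA$. Writing $\bq = \emptyword - (\emptyword - \bq)$ gives
$$ \bell \bq = \bp \inverse{\emptyword - \bq} \bq = \bp \inverse{\emptyword - \bq} - \bp \inverse{\emptyword - \bq} (\emptyword - \bq) = \bp \inverse{\emptyword - \bq} - \bp = \bell - \bp, $$
so that $\bell = \bp + \bell \bq$, as desired. (Equivalently, one can manipulate the series directly: $\bp \sum_{n \geq 0} \bq \conpow{n} = \bp + \bp \big( \sum_{n \geq 0} \bq \conpow{n} \big) \bq$, the reindexing being legitimate since within each fixed grade only finitely many terms appear.)

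For step (iii), suppose $\bell_1, \bell_2 \in \eTA$ both solve \eqref{eq:resolvent} and set $\bd := \bell_1 - \bell_2$, so $\bd = \bd \bq$. Reading this grade by grade, $\bd^n = \sum_{k=0}^n \bd^k \otimes \bq^{n-k}$ for every $n \geq 0$, and I would argue by strong induction on $n$: assuming $\bd^k = 0$ for all $k < n$ (vacuous for $n = 0$), the identity collapses to $\bd^n = \bd^n \otimes \bq^{\otimes 0} = \bq^\emptyword \, \bd^n$, hence $(1 - \bq^\emptyword) \bd^n = 0$; since $\abs{\bq^\emptyword} < 1$ forces $1 - \bq^\emptyword \neq 0$, we get $\bd^n = 0$. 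Thus $\bd = 0$ and the solution is unique. There is no deep obstacle here; the only point demanding care is the bookkeeping — ensuring the resolvent and the product $\bp \inverse{\emptyword - \bq}$ stay in $\eTA$ (the content of Lemma~\ref{apn:resolvent}) and performing every algebraic manipulation grade-wise, where all sums are finite.
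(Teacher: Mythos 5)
Your proof is correct. For existence you do essentially what the paper does (verify the candidate solves the equation using the two-sided inverse property of Remark~\ref{rmk:resolvent_is_inverse}), but for uniqueness you take a genuinely different, more elementary route: the paper passes from $\tilde{\bell}(\emptyword - \bq) = \bp$ to $\tilde{\bell} = \bp \inverse{\emptyword - \bq}$ by right-multiplying with the resolvent, whereas you set $\bd = \bell_1 - \bell_2$, read $\bd = \bd\,\bq$ grade by grade, and kill $\bd^n$ by strong induction using only $(1 - \bq^\emptyword)\bd^n = 0$. Your argument has the small advantage of not invoking the resolvent at all in the uniqueness step, and it makes visible that uniqueness only needs $\bq^\emptyword \neq 1$ rather than $\abs{\bq^\emptyword} < 1$ (the stronger hypothesis being what Lemma~\ref{apn:resolvent} needs for existence of the resolvent, hence of a solution); the paper's version is shorter but leans on the inverse property twice. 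Your attention to the fact that all grade-$n$ sums are finite, so the reindexing and associativity manipulations are legitimate in $\eTA$, is exactly the right bookkeeping point.
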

    
    \begin{proof}
        It is easy to verify that $\bell = \bp \inverse{\emptyword - \bq}$ is a solution of \eqref{eq:resolvent}. On the other hand, for every $\bphi \in \eTA$ s.t. $\bphi = \bp + \bphi \bq$, we directly derive $\bphi(\emptyword - \bq) = \bp = \bell(\emptyword - \bq)$ and then $(\bphi - \bell)(\emptyword - \bq) = 0$. Hence $\bphi = \bell$, verifying the uniqueness.
    \end{proof}
    
    Interestingly, whenever $\bell$ is a linear combination of single letters, the resolvent of $\bell$ is equal to the shuffle exponential $\shuexp{\bell}$ defined by
    \begin{equation} \label{nota:shuexp}
        \shuexp{\bell} := \sum_{n=0}^{\infty} \frac{\bell \shupow{n}}{n!}
    \end{equation}
    with
    $$ \bell \shupow{n} := \overbrace{\bell \shuprod \bell \shuprod \cdots \shuprod \bell}^{\text{$n$ times}}, \quad n \geq 1, \quad \bell \shupow{0} = \emptyword. $$
    
    \begin{proposition} \label{prop:resolvent-shuexp}
        Let $\bm{b} = \sum_{\word{i} \in \alphabet} \bm{b}^\word{i} \word{i}$, with $\bm{b}^\word{i} \in \R$, we have that
        
        \begin{equation} \label{eq:prop_2.3}
            \inverse{\emptyword - \bm{b}} = \shuexp{\bm{b}}.
        \end{equation}
        
        In particular, this implies     
        \begin{equation} \label{eq:prop_2.3_cor}            
            \shuexp{\bm{b}} = \emptyword + \shuexp{\bm{b}} \bm{b} = \emptyword + \bm{b} \shuexp{\bm{b}}.
        \end{equation} 
    \end{proposition}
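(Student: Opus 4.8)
The plan is to compare the two series coefficientwise in the tensor algebra. Since $\bell = \sum_{\word{i} \in \alphabet} \bell^\word{i} \word{i}$ is homogeneous of degree one we have $\bell^\emptyword = 0$, so that $\inverse{\emptyword - \bell}$ is well-defined by Lemma~\ref{apn:resolvent}; moreover $\bell\conpow{n}$ is homogeneous of degree $n$ (concatenation adds lengths) and $\bell\shupow{n}$ is homogeneous of degree $n$ as well (the shuffle product also adds lengths). Consequently the degree-$n$ component of $\inverse{\emptyword - \bell} = \sum_{k \geq 0} \bell\conpow{k}$ is $\bell\conpow{n}$, while that of $\shuexp{\bell} = \sum_{k \geq 0} \bell\shupow{k}/k!$ is $\bell\shupow{n}/n!$, so \eqref{eq:prop_2.3} is equivalent to the family of purely algebraic identities
\begin{equation} \label{eq:prop23-key}
    \bell\shupow{n} = n!\, \bell\conpow{n}, \qquad n \geq 0.
\end{equation}

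To prove \eqref{eq:prop23-key} I would first establish the combinatorial lemma that, for any letters $\word{a_1}, \dots, \word{a_n} \in \alphabet$,
\begin{equation} \label{eq:shuffle-letters}
    \word{a_1} \shuprod \word{a_2} \shuprod \cdots \shuprod \word{a_n} = \sum_{\sigma \in S_n} \word{a_{\sigma(1)} a_{\sigma(2)} \cdots a_{\sigma(n)}},
\end{equation}
with $S_n$ the symmetric group on $n$ symbols, the left-hand product being unambiguous by associativity of $\shuprod$. This goes by induction on $n$: the case $n = 1$ is trivial, and for the inductive step one unrolls the recursive definition in Definition~\ref{def:shuffleprod} to see that shuffling an arbitrary word $\word{b_1 \cdots b_n}$ with a single letter $\word{c}$ yields $\sum_{k=0}^{n} \word{b_1 \cdots b_k\, c\, b_{k+1} \cdots b_n}$, i.e. the letter $\word{c}$ inserted in each of the $n+1$ positions; combined with the inductive hypothesis for $\word{a_1} \shuprod \cdots \shuprod \word{a_n}$, this produces every permutation of $\word{a_1}, \dots, \word{a_{n+1}}$ exactly once, since from such a permutation one recovers the insertion slot as the position occupied by $\word{a_{n+1}}$ and, deleting it, the underlying permutation of $\word{a_1}, \dots, \word{a_n}$. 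Expanding $\bell\shupow{n} = \sum_{\word{a_1}, \dots, \word{a_n} \in \alphabet} \bell^{\word{a_1}} \cdots \bell^{\word{a_n}}\, (\word{a_1} \shuprod \cdots \shuprod \word{a_n})$ and inserting \eqref{eq:shuffle-letters}, each fixed $\sigma \in S_n$ contributes, after relabelling the summation indices, one full copy of $\bell\conpow{n} = \sum_{\word{a_1}, \dots, \word{a_n} \in \alphabet} \bell^{\word{a_1}} \cdots \bell^{\word{a_n}}\, \word{a_1 \cdots a_n}$; summing over the $n!$ permutations gives \eqref{eq:prop23-key}, hence \eqref{eq:prop_2.3}.

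The corollary \eqref{eq:prop_2.3_cor} then follows immediately from Remark~\ref{rmk:resolvent_is_inverse}: since the resolvent exists, $\bra{\emptyword - \bell}\inverse{\emptyword - \bell} = \inverse{\emptyword - \bell}\bra{\emptyword - \bell} = \emptyword$; substituting \eqref{eq:prop_2.3} and expanding the concatenation products gives $\shuexp{\bell} - \bell\, \shuexp{\bell} = \emptyword$ and $\shuexp{\bell} - \shuexp{\bell}\, \bell = \emptyword$, which rearrange to \eqref{eq:prop_2.3_cor}. Alternatively, \eqref{eq:prop23-key} shows $\shuexp{\bell} = \sum_{n \geq 0} \bell\conpow{n}$, which plainly commutes with $\bell$ under concatenation, so either one of the two identities yields the other.

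I expect the only genuine difficulty to be the bookkeeping in \eqref{eq:shuffle-letters} — carefully unrolling the recursive shuffle to handle ``word shuffled with one letter'' and checking the slot/permutation bijection in the induction step. There are no analytic subtleties: $\bell$ being homogeneous of degree one, every tensor level receives only finitely many contributions from each series, so all the manipulations above are legitimate.
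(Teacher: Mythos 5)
Your proposal is correct. It shares the paper's overall skeleton — both arguments reduce \eqref{eq:prop_2.3} to the single algebraic identity $\bell\shupow{n} = n!\,\bell\conpow{n}$ and then obtain \eqref{eq:prop_2.3_cor} from the resolvent identity of Remark~\ref{rmk:resolvent_is_inverse} — but the combinatorial core is handled differently. The paper proves the key identity by induction on $k$ in the equivalent form $\bell\conpow{k-1}\shuprod\bell = k\,\bell\conpow{k}$, unrolling the shuffle recursion one step via the projection decomposition and never expanding into individual words. You instead prove the classical fact that a shuffle of $n$ single letters equals the sum over all $n!$ permutations of their concatenations, and then expand $\bell\shupow{n}$ multilinearly, using the symmetry of the coefficient product $\bell^{\word{a_1}}\cdots\bell^{\word{a_n}}$ to collapse each permutation to one copy of $\bell\conpow{n}$. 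Your route is more explicit and makes the combinatorics transparent (at the cost of the slot/permutation bijection bookkeeping you anticipate), while the paper's induction is shorter and stays at the level of tensors; both are complete, and your homogeneity argument for matching the two series level by level, as well as the derivation of \eqref{eq:prop_2.3_cor}, are sound.
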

    
    \begin{proof}
        We first observe that $\bm{b}^{\emptyword} = 0$, so $\res{\bm{b}}$ is well-defined. We then prove that $\frac{1}{k!} \bm{b} \shupow{k} = \bm{b} \conpow{k}$.
        By induction, it suffices to prove for every $k \in \N^*$,
        \begin{equation} \label{eq:lemma_for_prop2.3}
            \bm{b} \conpow{k-1} \shuprod \bm{b} = k \bm{b} \conpow{k}.
        \end{equation}
        It is easy to check that \eqref{eq:lemma_for_prop2.3} is satisfied for $k = 1$. Now, assuming it holds for some $k \geq 1$, we can deduce
        \begin{align}
            \bm{b} \conpow{k} \shuprod \bm{b}
            &= \sum_{\word{i} \in \alphabet} \bm{b} \conpow{k} \bm{b}^{\word{i}} \word{i} + \sum_{\word{i} \in \alphabet} \bra{\bm{b} \conpow{k} \proj{i} \shuprod \bm{b}} \word{i}
            \\
            &= \bm{b} \conpow{k+1} + \sum_{\word{i} \in \alphabet} \bra{\bm{b} \conpow{k-1} \shuprod \bm{b}} \bm{b}^{\word{i}} \word{i}
            \\
            &= \bm{b} \conpow{k+1}+ \bra{\bm{b} \conpow{k-1} \shuprod \bm{b}} \bm{b}
            \\
            &= \bm{b} \conpow{k+1} + (k \bm{b} \conpow{k}) \bm{b}
            \\
            & = (k+1) \bm{b} \conpow{k+1},
        \end{align}
        which proves 
        \eqref{eq:lemma_for_prop2.3} by induction, hence proving the equality \eqref{eq:prop_2.3}. Noticing that $\res{\bm{b}} = \emptyword + \bm{b} \res{\bm{b}} = \emptyword + \res{\bm{b}} \bm{b}$, we derive \eqref{eq:prop_2.3_cor}.
    \end{proof}

\subsection{Signature of a continuous semimartingale} \label{subsec:finitesignature}

    We define the (path) signature of a semimartingale as the sequence of iterated stochastic integrals in the sense of Stratonovich. Throughout the paper, the Itô integral is denoted by $\int_0^\cdot Y_t \d X_t$ and the Stratonovich integral by $\int_0^\cdot Y_t \circ \d X_t$. If both $X$ and $Y$ are semimartingales then, $\int_0^\cdot Y_t \circ \d X_t := \int_0^\cdot Y_t \d X_t + \half [X, Y]_\cdot$.
    
    \begin{definition}[Signature] \label{def:sig} Fix $T > 0$.
        Let $(X_t)_{t \geq 0}$ be a continuous semimartingale on $\R^d$ on a complete filtered probability space $(\Omega, \mathcal{F}, (\mathcal{F}_t)_{t \geq 0}, \P)$. The signature of $X$ is defined by
        \begin{align*}
            \mathbb{X}: \Omega \times [0, T] &
            \to \eTA
            \\ (\omega, t) &
            \mapsto \sigX (\omega) := (1, \sigX^1(\omega), \dots, \sigX^n(\omega), \dots),
        \end{align*}
        where
        $$ \sigX^n := \int_{0 < u_1 < \cdots < u_n < t} \circ \d X_{u_1} \otimes \cdots \otimes \circ \d X_{u_n} $$
        takes value in $(\R^d)^{\otimes n}$, $n \geq 0$. Similarly, the truncated signature of order $M \in \N$ is defined by
        \begin{align} \label{def:sig-trunc}
            \mathbb{X}^{\leq M}: [0, T] &
            \to \tTA{M}
            \\ (\omega, t) &
            \mapsto \sigX^{\leq M}(\omega) := (1, \sigX^1(\omega), \dots, \sigX^M(\omega), 0, \dots, 0, \dots).
        \end{align}
    \end{definition}
    
    The signature plays a similar role as polynomials on path-space. Indeed, in dimension $d=1$, the signature of $X$ is the sequence of monomials $\left( \frac{(X_t - X_0)^n}{n!} \right)_{n \in \N}$. In particular, for $\bell \in T^M(\R)$, $\bracketsigX{\bell}$ is a polynomial in $(X_t-X_0)$ of degree no greater than $M$.
    
    \begin{sqexample}
        For $X_t = (t, W_t^1, W_t^2, \dots, W_t^d)$, where $(W_t^1, W_t^2, \dots, W_t^d)$ is a $d$-dimensional Brownian motion, we can compute the first level of the  signature
        $$ \sigX^1 = \left( t, W_t^1, W_t^2, \cdots, W_t^d \right), $$
        and the  second level
        $$ \sigX^2 =
        \begin{pmatrix}
            \frac{t^2}{2} & \int_0^t s \d W^1_s & \cdots & \int_0^t s \d W_s^d \\
            \int_0^t W_s^1 \d s & \frac{\left( W_t^1 \right)^2}{2} & \cdots & \int_0^t W_s^1 \circ \d W_s^d  \\
            \vdots & \vdots & \ddots & \vdots \\
            \int_0^t W_s^d \d s & \int_0^t W_s^d \circ \d W_s^1 & \cdots & \frac{\left( W_t^d \right)^2}{2}
        \end{pmatrix}
        . $$
    \end{sqexample}
    
    \begin{sqremark} \label{rmk:sig_iteration_def}
        It follows from Definition~\ref{def:sig} that the level $n$ of the signature $\sigX^n := (\sigX^{\word{i_1 \cdots i_n}})_{(\word{i_1 \cdots i_n}) \in V_n} \in (\R^d) \conpow{n}$ can be written  in  the following iterated form
        \begin{align} \label{eq:signdef2}
            \sigX^{\word{i_1 \cdots i_n}} = \int_0^t \sigX[s]^{\word{i_1 \cdots i_{n-1}}} \circ \d X_s^{\word{i_n}}, \quad {\word{i_1 \cdots i_n}} \in V_n.
        \end{align} 
    \end{sqremark}

\subsection{Finite linear combinations of signature elements}

    In this section, we fix $X$ a continuous semimartingale on $\R^d$ and we let $\mathbb{X}$ be its signature. We recall some important properties of finite linear combinations of signature elements $\bracketsigX{\bell}$ for $\bell \in \TA$, recall \eqref{eq:dualdef}. \\
    
    The first one highlights a crucial linearization property of the signature, the product of two linear combinations of the signature is again a linear combination of the signature where the coefficients are given by the shuffle product, recall Definition~\ref{def:shuffleprod}. This property will be later extended to infinite linear combinations of signatures in Proposition~\ref{prop:shuffle_property} below.
    
    \begin{lemma}[Shuffle product property] \label{lem:shuffleproperty}
        For $\bell, \bphi \in \TA$,
        $$ \bracketsigX{\bell} \bracketsigX{\bphi} = \bracketsigX{\bell \shuprod \bphi}. $$
    \end{lemma}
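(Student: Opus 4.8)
The plan is to prove the shuffle identity $\bracketsigX{\bell}\bracketsigX{\bp} = \bracketsigX{\bell \shuprod \bp}$ by reducing it to the case where $\bell = \word{v}$ and $\bp = \word{w}$ are single words, since both sides are bilinear in $(\bell, \bp)$ and every element of $\TA$ is a finite linear combination of words. So it suffices to show $\sigX^{\word{v}} \sigX^{\word{w}} = \bracketsigX{\word{v} \shuprod \word{w}}$ for all $\word{v}, \word{w} \in V$, i.e. that the pointwise product of two iterated Stratonovich integrals equals the linear functional of the signature associated with the word $\word{v} \shuprod \word{w}$.

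First I would set up an induction on the total length $|\word{v}| + |\word{w}|$. The base case $|\word{v}| + |\word{w}| = 0$ (both empty words) is trivial since $\sigX^{\emptyword} = 1$ and $\emptyword \shuprod \emptyword = \emptyword$; the case where one of the words is empty is immediate from the definition of the shuffle product. For the inductive step, write $\word{v} = \word{v'i}$ and $\word{w} = \word{w'j}$ with $\word{i}, \word{j} \in \alphabet$. Using the iterated representation from Remark~\ref{rmk:sig_iteration_def}, namely $\sigX^{\word{v'i}} = \int_0^t \sigX[s]^{\word{v'}} \circ dX_s^{\word{i}}$, apply the Stratonovich integration-by-parts / product rule (which, unlike Itô's, carries no correction term: $d(YZ) = Y \circ dZ + Z \circ dY$ for continuous semimartingales $Y, Z$) to the product $\sigX^{\word{v'i}} \sigX^{\word{w'j}}$. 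This yields
\begin{align*}
    \sigX^{\word{v'i}} \sigX^{\word{w'j}} &= \int_0^t \sigX[s]^{\word{v'i}} \sigX[s]^{\word{w'}} \circ dX_s^{\word{j}} + \int_0^t \sigX[s]^{\word{v'}} \sigX[s]^{\word{w'j}} \circ dX_s^{\word{i}}.
\end{align*}
Now apply the induction hypothesis to the two integrands: $\sigX[s]^{\word{v'i}} \sigX[s]^{\word{w'}} = \bracket{(\word{v'i}) \shuprod \word{w'}}{\sigX[s]}$ and $\sigX[s]^{\word{v'}} \sigX[s]^{\word{w'j}} = \bracket{\word{v'} \shuprod (\word{w'j})}{\sigX[s]}$, both having strictly smaller total word length. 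Since concatenation of a fixed element of $\TA$ with the letter $\word{j}$ corresponds exactly to Stratonovich integration against $dX^{\word{j}}$ at the level of the signature (again Remark~\ref{rmk:sig_iteration_def}, extended by linearity), the first integral becomes $\bracket{((\word{v'i}) \shuprod \word{w'})\word{j}}{\sigX[t]}$ and the second becomes $\bracket{(\word{v'} \shuprod (\word{w'j}))\word{i}}{\sigX[t]}$. Adding these and invoking the inductive definition of the shuffle product, $(\word{v'i}) \shuprod (\word{w'j}) = ((\word{v'i}) \shuprod \word{w'})\word{j} + (\word{v'} \shuprod (\word{w'j}))\word{i}$, gives exactly $\bracket{(\word{v'i}) \shuprod (\word{w'j})}{\sigX[t]}$, closing the induction.

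The main obstacle — and the only genuinely analytic point — is justifying the Stratonovich product rule and the interchange of "concatenate with a letter" and "integrate against that letter's driver" at the level of signature coordinates, i.e. that $\bracket{\bell\word{j}}{\sigX[t]} = \int_0^t \bracket{\bell}{\sigX[s]} \circ dX_s^{\word{j}}$ for $\bell \in \TA$. This follows by linearity from Remark~\ref{rmk:sig_iteration_def} together with the fact that each $\sigX^{\word{v}}$ is itself a continuous semimartingale (so the Stratonovich integrals are well-defined and the product rule applies), but one should state it cleanly. A small bookkeeping subtlety is the case analysis when exactly one of $\word{v}, \word{w}$ has length zero while the other is nonempty, which must be handled separately from the generic step since the decomposition $\word{v} = \word{v'i}$ presupposes $|\word{v}| \geq 1$; this is, however, immediate from $\word{w} \shuprod \emptyword = \word{w}$ and $\sigX^{\emptyword} = 1$.
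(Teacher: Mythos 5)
Your proof is correct and takes the same route the paper does: the paper simply cites the ``standard chain rule of Stratonovich integrals'' (Gaines, Proposition 2.2), and your induction on total word length via the correction-free Stratonovich product rule and the iterated-integral definition \eqref{eq:signdef2} is exactly the standard argument behind that citation, with the implicit use of associativity of Stratonovich integration ($\int Y \circ d(\int Z \circ dX) = \int YZ \circ dX$) being the only step you gloss over. You have merely written out in full what the paper delegates to the reference.
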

    
    \begin{proof}
        This follows from the standard chain rule of Stratonovich integrals, see \citet[Proposition 2.2]{gainesshuffle}.
    \end{proof}
    
    The shuffle product can be seen as an extension of the Cauchy product to the space of signatures. For example, set $d=1$, $X_0=0$ and $\bell \in T^M(\R), \bphi \in T^N(\R)$ for some $N, M \in \N$, i.e. there exists $(a_m)_{m \leq M}, (b_n)_{n \leq N} \in \R$ such that $\bell = \sum_{m=0}^M a_m \word{1} \conpow{m}$, $\bphi = \sum_{n=0}^N b_n \word{1} \conpow{n}$. Then,
    \begin{align*}
        \bracketsigX{\bell} \bracketsigX{\bphi} &
        = \sum_{m=0}^M a_m \frac{(X_t)^m}{m!} \sum_{n=0}^N b_n \frac{(X_t)^n}{n!}
        = \sum_{k=0}^{M+N} c_k \frac{(X_t)^k}{k!}
    \end{align*}
    with $c_k = \sum_{i=0}^{k} \binom{k}{i} a_i b_{k-i}$. In this case $\bell \shuprod \bphi = \sum_{k=0}^{M+N} c_k \word{1} \conpow{k}$. \\
    
    The second property shows that any finite linear combination of signature elements is a semi-martingale with an explicit decomposition given in terms of the projection elements, recall \eqref{eq:projection}. This semi-martingality property is no longer always true when infinite linear combinations of signature elements are considered, see Theorem~\ref{thm:sig_ito_formula} below. 
    
    \begin{lemma} \label{lem:derivsig}
        Let $\bell \in \TA$, then $(\bracketsigX{\bell})_{t\geq 0}$ is a semimartingale with decomposition
        \begin{align} \label{eq:decompsitionsemi}
            \d \bracketsigX{\bell} = \sum_{\word{i} \in \alphabet} \bracketsigX{\bell \proj{i}} \circ \d X_t^{\word{i}}.
        \end{align}
    \end{lemma}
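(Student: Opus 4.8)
The plan is to reduce to the case of a single word $\word{v} = \word{i_1 \cdots i_n} \in V_n$ by linearity, since $\bell \in \TA$ has only finitely many non-zero coefficients and both sides of \eqref{eq:decompsitionsemi} are linear in $\bell$. So it suffices to show that for every word $\word{v}$,
\begin{align*}
    d\, \sigX^{\word{v}} = \sum_{\word{i} \in \alphabet} \sigX^{(\word{v})|_{\word{i}}} \circ dX_t^{\word{i}},
\end{align*}
where $(\word{v})|_{\word{i}}$ is the projection from \eqref{eq:projection}: it equals $\word{i_1\cdots i_{n-1}}$ if $\word{i} = \word{i_n}$ and is $0$ otherwise (for $n \geq 1$), while for the empty word $\emptyword$ all projections vanish, consistent with $d\,\sigX^{\emptyword} = d(1) = 0$.

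First I would recall from Remark~\ref{rmk:sig_iteration_def}, equation \eqref{eq:signdef2}, that $\sigX[t]^{\word{i_1\cdots i_n}} = \int_0^t \sigX[s]^{\word{i_1\cdots i_{n-1}}} \circ dX_s^{\word{i_n}}$. Differentiating this relation gives directly $d\,\sigX[t]^{\word{i_1\cdots i_n}} = \sigX[t]^{\word{i_1\cdots i_{n-1}}} \circ dX_t^{\word{i_n}}$. Then I observe that this single term is exactly $\sum_{\word{i}\in\alphabet} \sigX[t]^{(\word{i_1\cdots i_n})|_{\word{i}}} \circ dX_t^{\word{i}}$, because by definition of the projection \eqref{eq:projection}, $(\word{i_1\cdots i_n})|_{\word{i}}$ equals $\word{i_1\cdots i_{n-1}}$ when $\word{i} = \word{i_n}$ and is zero for all other letters $\word{i}$, and $\langle 0, \mathbb{X}_t\rangle = 0$. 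Summing against the coefficients $\bell^{\word{v}}$ and using that $\sum_{n}\sum_{\word{v}\in V_n} \bell^{\word{v}} (\word{v})|_{\word{i}} = \bell|_{\word{i}}$ by definition of the projection on $\eTA$, together with linearity of the bracket, yields \eqref{eq:decompsitionsemi}. That each $\sigX^{\word{v}}$, and hence the finite sum $\langle\bell,\mathbb{X}_t\rangle$, is a genuine semimartingale follows by induction on the word length: $\sigX^{\emptyword}\equiv 1$ is trivially a semimartingale, and if $\sigX^{\word{i_1\cdots i_{n-1}}}$ is a semimartingale then its Stratonovich integral against the semimartingale $X^{\word{i_n}}$ — which, by the definition $\int_0^\cdot Y\circ dX = \int_0^\cdot Y\,dX + \thalf[X,Y]_\cdot$, is the sum of an Itô integral and a finite-variation bracket term — is again a semimartingale.

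There is no real obstacle here; the statement is essentially a bookkeeping translation of the iterated-integral definition \eqref{eq:signdef2} into the projection notation \eqref{eq:projection}. The only mild subtlety worth spelling out is the bookkeeping of the projection: one must check that, for a word of positive length, the sum over letters $\word{i}\in\alphabet$ in \eqref{eq:decompsitionsemi} collapses to a single non-zero summand (the last letter), and that the empty-word case is correctly handled with a vanishing right-hand side. I would also briefly note that Stratonovich integration is used throughout, so no Itô correction terms appear at this level — the semimartingale decomposition in the Itô sense would instead carry the extra $\thalf[\cdot,\cdot]$ terms, but in the Stratonovich formulation the decomposition \eqref{eq:decompsitionsemi} is exactly the ``chain rule'' one expects.
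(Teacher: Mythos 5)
Your proof is correct and follows essentially the same route as the paper: both rest on the iterated-integral identity \eqref{eq:signdef2} combined with the projection decomposition $\bell = \bell^{\emptyword}\emptyword + \sum_{\word{i}\in\alphabet}\bell\proj{i}\word{i}$, the only difference being that you carry out the bookkeeping word-by-word before summing, while the paper applies it directly at the level of $\bell$. Your added induction for the semimartingale property is a point the paper leaves implicit, but it is not a divergence in method.
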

        
    \begin{proof}
        It follows from the equivalent definition of the signature \eqref{eq:signdef2} that
        $$ \bracketsigX{ \bell \proj{i} \word{i}} = \int_0^t \bracketsigX[s]{ \bell \proj{i}} \circ \d X_s^{\word{i}}, \quad \word{i} \in \alphabet. $$
        Then, using the projection relation \eqref{eq:projection} and the linearity of the bracket, we get that 
        \begin{align*}
            \bracketsigX{\bell} 
            = \bracketsigX{ \bell^\emptyword \emptyword + \sum_{\word{i} \in \alphabet} \bell \proj{i} \word{i}}
            = \bell^\emptyword + \sum_{\word{i} \in \alphabet} \int_0^t \bracketsigX[s]{\bell \proj{i}} \circ \d X_s^{\word{i}}.
        \end{align*}
    \end{proof}
    
    Finally, we show how to transform the Stratonovich integral into the Itô integral in the context of finite linear combination of the signature.

    \begin{lemma} \label{lem:stratoito}
        Let $\bell \in \TA$, then for every $\word{i} \in \alphabet$:
        $$ \bracketsigX{\bell} \circ \d X_t^{\word{i}} = \bracketsigX{\bell} \d X_t^{\word{i}} + \half\sum_{\wj\in\alphabet} \bracketsigX{\bell \proj{j}} \d [X^{\wj},X^{\wi}]_t. $$
    \end{lemma}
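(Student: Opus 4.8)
The statement is an essentially immediate unfolding of the definition of the Stratonovich integral, combined with the semimartingale decomposition already established in Lemma~\ref{lem:derivsig}. Here is the plan. Since $\bell \in \TA$ has only finitely many non-zero coordinates, $Y_t := \bracketsigX{\bell}$ is a genuine continuous semimartingale (a finite sum of iterated Stratonovich integrals of the continuous semimartingale $X$), so the Stratonovich integral $\int_0^\cdot Y_s \circ dX_s^{\word{i}}$ is well-defined and, by the definition recalled at the start of Section~\ref{S:finitesignature},
$$ Y_t \circ dX_t^{\word{i}} = Y_t \, dX_t^{\word{i}} + \half \, d[X^{\word{i}}, Y]_t . $$
It therefore only remains to identify $d[X^{\word{i}}, Y]_t$ with $\sum_{\word{j} \in \alphabet} \bracketsigX{\bell \proj{j}} \, d[X^{\word{j}}, X^{\word{i}}]_t$.

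To do so, I would invoke Lemma~\ref{lem:derivsig}, which gives $dY_t = \sum_{\word{j} \in \alphabet} \bracketsigX{\bell \proj{j}} \circ dX_t^{\word{j}}$. Each summand is, by definition of the Stratonovich integral (legitimate since $\bell \proj{j} \in \TA$), the sum of the Itô integral $\int_0^\cdot \bracketsigX[s]{\bell \proj{j}} \, dX_s^{\word{j}}$ and a continuous process of finite variation. A continuous finite-variation process has vanishing quadratic covariation with any semimartingale, so bilinearity of the bracket yields
$$ [X^{\word{i}}, Y]_t = \sum_{\word{j} \in \alphabet} \Bigl[ X^{\word{i}}, \int_0^\cdot \bracketsigX[s]{\bell \proj{j}} \, dX_s^{\word{j}} \Bigr]_t = \sum_{\word{j} \in \alphabet} \int_0^t \bracketsigX[s]{\bell \proj{j}} \, d[X^{\word{i}}, X^{\word{j}}]_s , $$
where the last equality is the standard identity $[X^{\word{i}}, \int_0^\cdot H_s \, dX_s^{\word{j}}]_t = \int_0^t H_s \, d[X^{\word{i}}, X^{\word{j}}]_s$ applied to the locally bounded adapted integrand $H_s = \bracketsigX[s]{\bell \proj{j}}$. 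Substituting this into the first display and using the symmetry of the covariation completes the proof.

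The argument is routine and no real obstacle is expected; the only point worth a line of justification is precisely the claim that one may discard the finite-variation Stratonovich correction terms when computing the covariation with $X^{\word{i}}$, which is exactly the fact that $[A, Z] \equiv 0$ for a continuous finite-variation process $A$. The finiteness of the support of $\bell$ (and of each $\bell\proj{j}$) is what keeps everything in the realm of genuine semimartingales and is used implicitly throughout; this is the counterpart of the caveat that the lemma, like Lemma~\ref{lem:derivsig}, need not survive verbatim for infinite linear combinations, as anticipated before Theorem~\ref{thm:sig_ito_formula}.
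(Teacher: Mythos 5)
Your proof is correct and follows essentially the same route as the paper's: identify $Y_t = \bracketsigX{\bell}$ as a semimartingale via Lemma~\ref{lem:derivsig}, apply the definition of the Stratonovich integral, and compute the covariation $[Y, X^{\word{i}}]$ termwise. You merely spell out the last step (discarding the finite-variation Stratonovich corrections and using the standard bracket identity for Itô integrals) in more detail than the paper, which states the resulting identity directly.
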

    
    \begin{proof}
        Denote by $Y_t = \bracketsigX{\bell}$ and fix $\word{i} \in \alphabet$. It follows from Lemma~\ref{lem:derivsig} that $Y$ is a semimartingale with dynamics
        \begin{align*}
             \d Y_t = \sum_{\word{j} \in \alphabet} \bracketsigX{\bell \proj{j}} \circ \d X_t^\word{j}.
        \end{align*}
        Therefore, by the definition of the Stratonovich integral:
        \begin{align*}
            Y_t \circ \d X_t^\word{i} = Y_t \d X_t^{\word{i}} + \half \d [Y, X^\word{i}]_t = Y_t \d X_t^\word{i} + \half \sum_{\wj\in\alphabet}\bracketsigX{\bell \proj{j}} \d [X^{\wj},X^{\wi}]_t. 
        \end{align*}
    \end{proof}

\section{Infinite linear combinations of signature elements} \label{sec:inf_bracket_sig}
    From now on, we fix a  complete filtered probability space ($\Omega, \F,\mathbb F:=(\F_t)_{t \in [0, T]}, \P$) satisfying the usual conditions, and $\bra{X_t}_{0 \leq t \leq T}$ a continuous $\mathbb F$-semimartingale on $\R^d$ and we let $\mathbb{X}$ be its signature. In Section~\ref{subsec:finitesignature}, we recalled well-known properties of finite linear combinations of signature elements $\bracketsigX{\bell}$ for $\bell \in \TA$. In this section, we study infinite linear combinations $\bracketsigX{\bell}$ for certain admissible $\bell \in \eTA$ for which the infinite series will make sense. Our central results in this section are an Itô formula for such elements in Theorem~\ref{thm:sig_ito_formula} below, {a control for their supremum norm in Propositions~\ref{prop:uniform_bound_L2} and \ref{prop:Ah},} and an estimate of solutions of algebraic linear equations taking values in the extended tensor algebra in Theorem~\ref{thm:ell_in_Sh_if_q_dominated}. \\

    For this, we consider the space $\A(\mathbb{X})$ of admissible elements $\bell$ using the associated stochastic semi-norm:
    
    $$ \norm{\bell}_t^{\A(\mathbb{X})} := \sum_{n=0}^\infty \left| \sum_{\word{v} \in V_n} \bell^\word{v} \sigX^\word{v} \right|, \quad t \geq 0, $$
    
    recall the definition of $V_n$ in \eqref{def:sig_basis} and the decomposition \eqref{eq:sig_expansion}. $\norm{\bell}_t^{\A(\mathbb{X})} $ is actually random variables. Whenever $\norm{\bell}_t^{\A(\mathbb{X})}< \infty$ a.s., the infinite linear combination 
    $$ \bracketsigX{\bell} := \sum_{n=0}^\infty \sum_{\word{v} \in V_n} \bell^\word{v} \sigX^\word{v} $$
    is well-defined. This leads to the following definition for the admissible set $\A(\mathbb{X}):$
    $$ \A(\mathbb{X}) := \set{ \bell \in \eTA: \norm{\bell}_t^{\A(\mathbb{X})} < \infty \text{ for all } t \in [0, T] \text{ a.s.} }. $$
    
    For simplicity, we write $\A$ (resp. $\normA{\cdot}$), in place of $\A(\mathbb{X})$ (resp. $\norm{\cdot}_t^{\mathcal{A}(\mathbb{X})}$), when it does not cause ambiguity. Note that $\TA \subset \A$ and that $\bracketsigX{\bell}$  extends \eqref{eq:dualdef}, as the two bracket operations $\langle \cdot, \cdot \rangle$ coincide whenever $\bell \in \TA$.

    \begin{sqremark}
        The space $\A$ plays a crucial role in this paper. Whenever we talk about $\bracketsigX{\bell}$ for $\bell \in \eTA$, we have to first verify that $\bell$ belongs to $\A$ to ensure that $\bracketsigX{\bell}$ is well-defined. The set $\A$ is a natural set to consider. It has also been studied in \citet*[Example 3.2 (iii)]{cuchiero2023signature}. In practice, however, we will use a stronger but more tractable criterion in Section~\ref{subsec:Sh} below to obtain that a given $\bell$ belongs to $\A$.
    \end{sqremark}

    \begin{sqremark}
        If $\bell \in \A$, then $\bracketsigX{\bell}$ is a progressively measurable process, as it is the a.s.-limit of a series of progressively measurable processes.
    \end{sqremark}    
    
    {\revtwo
    \begin{sqremark} \label{rmk:A_tilde}
       We could restrict to a slightly smaller space $\tilde{\A}$:
        $$ \tilde{\A}(\mathbb{X}) := \set{\bell \in \eTA: \sup_{t \in [0, T]} \norm{\bell}_t^{\A(\mathbb{X})} < \infty \text{ a.s.}}, $$
        and show that whenever $X$ and $Y$ follow the same law, $\tilde{\A}(\mathbb{X}) = \tilde{\A}(\mathbb{Y})$. This would however not be the case in general for $\A$ due to measurability issue.
    \end{sqremark}
    }

    It is straightforward to see that the sets $\A$ is closed under linear operations, i.e.~for $\alpha, \beta \in \R$ and $\bell, \bphi \in \A$, then $\alpha \bell + \beta \bphi \in \A$. Moreover, $\A$ is closed under the shuffle product.
    
    The next proposition extends the shuffle property, derived in Proposition~\ref{lem:shuffleproperty}, to the case of infinite linear combination of signatures. We note that it can be obtained from a particular instance of the more general result in \cite[Lemma 4.4]{cuchiero2023signature} dealing with so-called shuffle compatible partitions. For completeness we provide a proof for our specific setting in Appendix~\ref{A:proofshuffle}.
 
    \begin{proposition}[Extended Shuffle property] \label{prop:shuffle_property}
        If $\bell, \bphi \in \A$, then $\bell \shuprod \bphi \in \A$ and
        $$ \bracketsigX{\bell} \bracketsigX{\bphi} = \bracketsigX{\bell \shuprod \bphi}. $$
    \end{proposition}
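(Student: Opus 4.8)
The plan is to reduce the infinite-dimensional statement to the already-established finite case of Lemma~\ref{lem:shuffleproperty} by a truncation-and-limit argument, with the main work being a careful interchange of limits that is controlled by the admissibility semi-norm $\normA{\cdot}$. First I would fix $t \in [0,T]$ and work pathwise (on the full-measure event where $\normA[t]{\bell}, \normA[t]{\bp} < \infty$). Write $\bell_{\leq N} := \sum_{n=0}^N \sum_{\word{v} \in V_n} \bell^\word{v} \word{v} \in \TA$ and similarly $\bp_{\leq N}$, so that $\bracketsigX{\bell_{\leq N}} \to \bracketsigX{\bell}$ and $\bracketsigX{\bp_{\leq N}} \to \bracketsigX{\bp}$ as $N \to \infty$, by definition of $\A$. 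By Lemma~\ref{lem:shuffleproperty} we have, for every $N$,
\begin{align*}
    \bracketsigX{\bell_{\leq N}} \bracketsigX{\bp_{\leq N}} = \bracketsigX{\bell_{\leq N} \shuprod \bp_{\leq N}}.
\end{align*}
The left-hand side converges to $\bracketsigX{\bell} \bracketsigX{\bp}$ as $N \to \infty$, so it suffices to show the right-hand side converges to $\bracketsigX{\bell \shuprod \bp}$, and simultaneously that $\bell \shuprod \bp \in \A$.

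The key estimate is a sub-multiplicativity bound for the semi-norm: I claim $\normA[t]{\bell \shuprod \bp} \leq \normA[t]{\bell}\,\normA[t]{\bp}$ whenever the right side is finite. To see this, note that the shuffle product is \emph{graded}: if $\word{v} \in V_m$ and $\word{w} \in V_n$ then $\word{v} \shuprod \word{w}$ is supported on $V_{m+n}$. Hence, grouping the coefficients of $\bell \shuprod \bp$ by level and using the finite-level shuffle identity $\langle \ell^{(m)} \shuprod p^{(n)}, \sigX^{\leq m+n}\rangle = \langle \ell^{(m)}, \sigX\rangle \langle p^{(n)}, \sigX\rangle$ (which is exactly Lemma~\ref{lem:shuffleproperty} applied level by level), one gets
\begin{align*}
    \normA[t]{\bell \shuprod \bp}
    = \sum_{k=0}^\infty \left| \sum_{m+n=k} \Big(\sum_{\word{v}\in V_m}\bell^\word{v}\sigX^\word{v}\Big)\Big(\sum_{\word{w}\in V_n}\bp^\word{w}\sigX^\word{w}\Big)\right|
    \leq \left(\sum_{m=0}^\infty \Big|\sum_{\word{v}\in V_m}\bell^\word{v}\sigX^\word{v}\Big|\right)\left(\sum_{n=0}^\infty \Big|\sum_{\word{w}\in V_n}\bp^\word{w}\sigX^\word{w}\Big|\right),
\end{align*}
which is the Cauchy-product / Mertens inequality and gives $\normA[t]{\bell\shuprod\bp} \leq \normA[t]{\bell}\normA[t]{\bp} < \infty$ a.s.; thus $\bell \shuprod \bp \in \A$. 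Here I should double-check the grading claim carefully against Definition~\ref{def:shuffleprod} (an easy induction on word lengths), and confirm that the pathwise level-$k$ value of $\bracketsigX{\bell \shuprod \bp}$ really is the Cauchy product of the level-series of $\bracketsigX{\bell}$ and $\bracketsigX{\bp}$ — this is where the finite-case lemma does its work.

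Finally, for the convergence of $\bracketsigX{\bell_{\leq N} \shuprod \bp_{\leq N}}$ to $\bracketsigX{\bell \shuprod \bp}$: since $\bell_{\leq N} \shuprod \bp_{\leq N}$ and $\bell \shuprod \bp$ agree on all levels $k \leq N$ (because levels $\leq N$ of the shuffle product only involve $\word{v}, \word{w}$ with $|\word{v}|, |\word{w}| \leq N$ — in fact $\leq k$), the difference $\bracketsigX{\bell\shuprod\bp} - \bracketsigX{\bell_{\leq N}\shuprod\bp_{\leq N}}$ is bounded in absolute value by $\sum_{k > N} |\langle (\bell\shuprod\bp)^{(k)}, \sigX\rangle|$, which is the tail of the convergent series $\normA[t]{\bell\shuprod\bp}$ and hence tends to $0$ as $N \to \infty$. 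Combining the two convergences with the finite-$N$ identity yields $\bracketsigX{\bell}\bracketsigX{\bp} = \bracketsigX{\bell\shuprod\bp}$ pathwise on a full-measure set, for each fixed $t$; since this holds for all $t \in [0,T]$ this completes the proof. The main obstacle, and the place I would be most careful, is justifying the Cauchy-product rearrangement and the level-by-level matching rigorously — i.e.\ making sure the unconditional/absolute convergence afforded by the $\A$-semi-norm genuinely licenses regrouping the double sum by total level — rather than any deep new idea; the proof is essentially Mertens' theorem dressed in tensor-algebra notation, leaning on the already-proven finite shuffle identity.
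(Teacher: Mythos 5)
Your proposal is correct and follows essentially the same route as the paper: both arguments rest on the grading of the shuffle product, the finite-level identity of Lemma~\ref{lem:shuffleproperty} applied level by level, and the resulting Mertens/Cauchy-product bound $\normA{\bell \shuprod \bp} \leq \normA{\bell}\,\normA{\bp}$, which is exactly the estimate in Appendix~\ref{A:proofshuffle}; your explicit truncation argument merely fills in the step the paper dismisses with ``it can then be argued very similarly.'' One small imprecision: the difference $\bracketsigX{\bell\shuprod\bp} - \bracketsigX{\bell_{\leq N}\shuprod\bp_{\leq N}}$ is not bounded by the tail of $\normA{\bell\shuprod\bp}$ alone, since $\bell_{\leq N}\shuprod\bp_{\leq N}$ itself has nonzero levels up to $2N$; the correct dominating quantity is the tail of the absolute double series $\sum_{k}\sum_{m+n=k}\left|\sum_{\word{v}\in V_m}\bell^{\word{v}}\sigX^{\word{v}}\right|\left|\sum_{\word{w}\in V_n}\bp^{\word{w}}\sigX^{\word{w}}\right|$, which is finite by the same sub-multiplicativity estimate, so the conclusion stands.
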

    
    \begin{proof}
        The proof is provided in Appendix~\ref{A:proofshuffle}.
    \end{proof}


\subsection{The space \texorpdfstring{$\I$}{I} and Itô's formula} 

   The aim of this subsection is the derivation of an Itô's formula in Theorem~\ref{thm:sig_ito_formula} below for infinite linear combinations of signature elements. In preparation, we define $\mathcal{Q}_\word{i}(X)$, the set of coordinates of $X$ that have a non-zero quadratic covariation with $X^\word{i}$:
   $$ \mathcal{Q}_\word{i}(X) := \set{\word{j} \in \alphabet: [X^\word{j}, X^\word{i}]_t \neq 0, \, \text{on a set of non-zero $\d t \otimes \d \P$ measure}}, \quad \word{i} \in \alphabet. $$
    
    \begin{sqremark}
        $\mathcal{Q}_\word{i}(X)$ is introduced in order to get rid of unnecessary assumptions in Lemma~\ref{thm:sig_semimartingale}, since we only need $\bell \proj{j} \in \A$ for those $\word{j} \in \mathcal{Q}_\word{i}(X)$. For example, in the particular case of $X_t = (t, W_t)$, with $W$ a one-dimensional Brownian motion, assumptions are only needed for the terms $\bell, \bell \proj{1}, \bell \proj{2}, \bell \proj{22} \in \A$, but not for $\bell \proj{21}$ and $\bell \proj{12}$.
    \end{sqremark}
    
    \begin{lemma} \label{thm:sig_semimartingale}
        Fix $\word{i} \in \alphabet$ and let $\bell \in \A$ be such that  
        
        \begin{equation} \label{eq:assumption_3.1}
            \bell \word{i} \in \A, \quad \int_0^T \left( \normA{\bell} \right)^2 \d [X^\word{i}, X^\word{i}]_t < \infty \text{ and } \int_0^T \normA{\bell} \d \abs{F_t^\word{i}} < \infty,
        \end{equation}
        where $F^{\word{i}}$ is the finite variation part of $X^{\word{i}}$. We also assume that for every $\word{j} \in \mathcal{Q}_\word{i}(X)$,
        
        \begin{equation} \label{eq:assumption_ito_decop}
            \bell \proj{j} \in \A \quad \text{and} \quad \int^T_0 \normA{\bell \proj{j}} \d [X^\word{j}, X^\word{i}]_t < \infty. 
        \end{equation}
        Then,
        \begin{align} \label{eq:lemmaito}
            \bracketsigX{\bell \word{i}} = \int_0^t \bracketsigX[s]{\bell} \d X_s^\word{i} + \half \sum_{\word{j} \in \alphabet} \int^t_0 \bracketsigX[s]{\bell \proj{j}} \d [X^\word{j}, X^\word{i}]_s,  \quad \text{a.s.}
        \end{align}
        for every $t \in [0, T]$. 
    \end{lemma}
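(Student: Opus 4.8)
The plan is to reduce everything to the finite-dimensional identity already available and then pass to the limit by truncation.

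\textbf{First}, I would establish \eqref{eq:lemmaito} for $\bell \in \TA$. Every word occurring in the concatenation $\bell\word{i}$ ends with the letter $\word{i}$, so $(\bell\word{i})^\emptyword = 0$, $(\bell\word{i})\proj{i} = \bell$, and $(\bell\word{i})\proj{j} = 0$ for $\word{j} \in \alphabet\setminus\set{\word{i}}$. Applying Lemma~\ref{lem:derivsig} to $\bell\word{i}\in\TA$ then gives $\bracketsigX{\bell\word{i}} = \int_0^t \bracketsigX[s]{\bell}\circ dX_s^\word{i}$ (the constant term vanishing since $\sigX[0]$ has only its empty-word coordinate nonzero), and converting this Stratonovich integral into an Itô integral via Lemma~\ref{lem:stratoito} produces exactly \eqref{eq:lemmaito}. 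Equivalently, this is immediate from the iterated-integral relation \eqref{eq:signdef2} applied termwise.

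\textbf{Second}, for general $\bell \in \A$ I would truncate: set $\bell^{\leq M} := \sum_{n=0}^M\sum_{\word{v}\in V_n}\bell^\word{v}\word{v}\in\TA$. The words of $\bell^{\leq M}\word{i}$ are precisely the words of $\bell\word{i}$ of length at most $M+1$, so $\bracketsigX{\bell^{\leq M}\word{i}}$ is the $(M{+}1)$-th partial sum of the series $\bracketsigX{\bell\word{i}}$, which converges a.s.\ since $\bell\word{i}\in\A$; likewise $(\bell^{\leq M})\proj{j}$ is a truncation of $\bell\proj{j}$, so $\bracketsigX[s]{(\bell^{\leq M})\proj{j}}$ is a partial sum of $\bracketsigX[s]{\bell\proj{j}}$, and $\bracketsigX[s]{\bell^{\leq M}}$ is a partial sum of $\bracketsigX[s]{\bell}$. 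The first step applied to $\bell^{\leq M}$ gives
\begin{align*}
    \bracketsigX{\bell^{\leq M}\word{i}} = \int_0^t \bracketsigX[s]{\bell^{\leq M}}\, dX_s^\word{i} + \half\sum_{\word{j}\in\alphabet}\int_0^t \bracketsigX[s]{(\bell^{\leq M})\proj{j}}\, d[X^\word{j},X^\word{i}]_s,
\end{align*}
and the whole matter reduces to letting $M\to\infty$ in this identity. Only the finitely many $\word{j}\in\mathcal{Q}_\word{i}(X)$ contribute to the sum, since $[X^\word{j},X^\word{i}]\equiv 0$ for the others (a continuous finite-variation process vanishing $dt\otimes d\P$-a.e.); for those $\word{j}$, $\bracketsigX[s]{(\bell^{\leq M})\proj{j}}\to\bracketsigX[s]{\bell\proj{j}}$ a.s.\ for each $s$ (as $\bell\proj{j}\in\A$) with $|\bracketsigX[s]{(\bell^{\leq M})\proj{j}}|\leq\normA[s]{\bell\proj{j}}$, so ordinary dominated convergence together with $\int_0^T\normA{\bell\proj{j}}\,d[X^\word{j},X^\word{i}]_t<\infty$ handles the bracket terms. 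For the stochastic integral, write $X^\word{i}=X_0^\word{i}+M^\word{i}+F^\word{i}$ with $M^\word{i}$ a continuous local martingale and $F^\word{i}$ of finite variation; since $\bracketsigX[\cdot]{\bell}$ is progressively measurable (an a.s.\ limit of such processes), $|\bracketsigX[s]{\bell^{\leq M}}|\leq\normA[s]{\bell}$, and $\normA{\bell}$ satisfies the two integrability bounds in \eqref{eq:assumption_3.1} (note $[X^\word{i},X^\word{i}]=[M^\word{i},M^\word{i}]$), the dominated convergence theorem for stochastic integrals yields $\int_0^\cdot\bracketsigX[s]{\bell^{\leq M}}\,dX_s^\word{i}\to\int_0^\cdot\bracketsigX[s]{\bell}\,dX_s^\word{i}$ uniformly on $[0,T]$ in probability. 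Passing to an a.s.\ convergent subsequence, all three terms converge a.s., giving \eqref{eq:lemmaito} for each fixed $t$; since its right-hand side is a.s.\ continuous in $t$ (as $X$ is a continuous semimartingale), the identity then holds simultaneously for all $t\in[0,T]$ a.s.

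\textbf{The main obstacle} is the interchange of the limit with the stochastic integral: one must check that $\bracketsigX[\cdot]{\bell}$ is an admissible integrand against $X^\word{i}$ — which is precisely what \eqref{eq:assumption_3.1} ensures — and then invoke the dominated convergence theorem for stochastic integrals with dominating process $\normA{\bell}$, keeping in mind that it only delivers convergence uniformly on compacts in probability and hence requires a subsequence argument to be reconciled with the pathwise convergence of the other two terms. The finite-dimensional reduction in the first step and the Lebesgue–Stieltjes limit for the bracket terms are routine given Lemmas~\ref{lem:derivsig} and~\ref{lem:stratoito}.
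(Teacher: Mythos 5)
Your proof is correct and follows essentially the same route as the paper's: the word-level (finite) identity via Lemmas~\ref{lem:derivsig} and~\ref{lem:stratoito}, ordinary dominated convergence with dominating function $\normA[s]{\bell\proj{j}}$ for the bracket terms, and the stochastic dominated convergence theorem with dominating process $\normA[s]{\bell}$ for the $dX^{\word{i}}$ integral; the paper identifies the u.c.p.\ limit with the a.s.\ limit of the partial sums (which exists because the other two terms of the truncated identity converge a.s.) rather than extracting an a.s.\ convergent subsequence, but the two devices are interchangeable here. One caveat: your closing upgrade to ``simultaneously for all $t\in[0,T]$'' is not justified by continuity of the right-hand side alone, because the left-hand side $\bracketsigX{\bell\word{i}}$ is defined as an infinite series and need not be continuous under the stated hypotheses; agreement for each fixed $t$ with a continuous process does not force agreement for all $t$. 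This is precisely the point of Remark~\ref{rmk:3.5}. The lemma as stated only asserts the identity for each fixed $t$ almost surely, which your argument does deliver (and, if you wanted the simultaneous version, the correct route would be through the uniform a.s.\ convergence along your subsequence together with the a.s.\ convergence of the left-hand partial sums for all $t$, not through continuity of the right-hand side).
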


    \begin{proof}
        Fix $t \in [0, T]$. Clearly, under the assumptions \eqref{eq:assumption_3.1}-\eqref{eq:assumption_ito_decop} on $\bell$, the right side of equation \eqref{eq:lemmaito} is well-defined.  For every word $\word{v} \in V$, we can write it as $\word{v} = \sum_{\word{j} \in \alphabet} \word{v} \proj{j} \word{j}$. By Lemma~\ref{lem:stratoito}, we obtain:  
        \begin{align}
            \bracketsigX{\word{vi}} = \int_0^t\bracketsigX{\wv}\circ \d X^{\wi}_t
         = \half \sum_{\word{j} \in \alphabet} \int_0^t \bracketsigX[s]{\word{v} \proj{j}} \d [X^\word{j}, X^\word{i}]_s + \int_0^t \bracketsigX[s]{\word{v}} \d X_s^\word{i}.
        \end{align}
        
        Combined with the decomposition \eqref{eq:sig_expansion}, it yields that 
        \begin{align}
            \bracketsigX{\bell \word{i}} &
            = \sum_{n=0}^\infty \sum_{\word{v} \in V_n} \bell^\word{v} \bracketsigX{\word{vi}}
            = \sum_{n=0}^\infty \sum_{\word{v} \in V_n} \bell^\word{v} \int_0^t \left( \half \sum_{\word{j} \in \alphabet} \bracketsigX[s]{\word{v} \proj{j}} \d [X^\word{j}, X^\word{i}]_s + \bracketsigX[s]{\word{v}} \d X_s^\word{i} \right). \label{eq:proof_sig_ito_1}
        \end{align}
        
        It remains to argue, using dominated convergence, that both terms appearing on the right-hand side of \eqref{eq:proof_sig_ito_1} correspond to the terms in \eqref{eq:lemmaito}. \\
        \begin{enumerate}[label=\normalfont(\textbf{\roman*})]
            \item
            For the first term, if $\word{j} \notin \mathcal{Q}_\word{i}(X)$, $[X^\word{j}, X^\word{i}]_t = 0$ for all $t$, so we only care about the letters $\word{j} \in \mathcal{Q}_\word{i}(X)$. Since $\bell \proj{j} = \sum_{n=0}^\infty \sum_{\word{v} \in V_n} \bell^\word{v} \word{v} \proj{j} \in \A$ by assumption \eqref{eq:assumption_ito_decop}, we can apply the dominated convergence theorem, with $\normA{\bell \proj{j}}$ as the dominated function, and get
            \begin{align} 
                \sum_{n=0}^\infty \sum_{\word{v} \in V_n} \bell^\word{v} \int_0^t \bracketsigX[s]{\word{v} \proj{j}} \d [X^\word{j}, X^\word{i}]_s &
                = \int_0^t \left( \sum_{n=0}^\infty \sum_{\word{v} \in V_n} \bell^\word{v} \bracketsigX[s]{\word{v} \proj{j}} \right) \d [X^\word{j}, X^\word{i}]_s
                \\
                &= \int_0^{t} \bracketsigX[s]{\bell \proj{j}} \d [X^\word{j}, X^\word{i}]_s\label{proof_sig_ito_3}
            \end{align}

            \item
            For the second term, let us define
            
            $$ Y_t^N := \sum_{n=0}^N \sum_{\word{v} \in V_n} \bell^\word{v} \int_0^t \bracketsigX[s]{\word{v}} \d X_s^\word{i}. $$
            
            On the one hand, as $\int_0^T \bra{\normA{\bell}}^2 \d [X^\word{i}, X^\word{i}]_t < \infty$ and $\sum_{n=0}^N \sum_{\word{v} \in V_n} \bell^\word{v} \bracketsigX{\word{v}} \leq \normA{\bell}$ for every $N$ and assumption \eqref{eq:assumption_3.1}, by the dominated convergence theorem for stochastic integrals, the random variable $Y^N$ converges in probability to 
            
            $$ Y_t^\infty := \int_0^t \sum_{n=0}^\infty \sum_{\word{v} \in V_n} \bell^\word{v} \bracketsigX[s]{\word{v}} \d X_s^\word{i} $$
            as $N \to \infty$.

            On the other hand, since $\bell \word{i} \in \A$, the left-hand side of \eqref{eq:proof_sig_ito_1} is a.s.~finite, and since second term of its right-hand side has been proved to be finite, it follows that $\sum_{n=0}^\infty \sum_{\word{v} \in V_n} \bell^\word{v} \int_0^t \bracketsigX[s]{\word{v}} \d X_s^\word{i}$ exists a.s.~and corresponds to an almost sure limit of $Y^N$ as $N \to \infty$. Which yields
            
            \begin{equation} \label{proof_sig_ito_2}
                \sum_{n=0}^\infty \sum_{\word{v} \in V_n} \bell^\word{v} \int_0^t \bracketsigX[s]{\word{v}} \d X_s^\word{i} 
                = Y^\infty_t
                = \int_0^t \bracketsigX[s]{\bell} \d X_s^\word{i} \quad {a.s.}
            \end{equation}
        \end{enumerate}
        
        Combining \eqref{eq:proof_sig_ito_1}, \eqref{proof_sig_ito_3} and \eqref{proof_sig_ito_2} yields \eqref{eq:lemmaito} and completes the proof. 
    \end{proof}
   
    \begin{sqremark}\label{rmk:3.5}
        To obtain that equality \eqref{eq:lemmaito} holds for every $t$ almost surely, one could argue that $\bracketsigX{\bell \word{i}}$ is a continuous process for instance. This requires additional conditions on $\bell$, see for instance Proposition~\ref{prop:Ah}-\ref{P:continuity} below.
    \end{sqremark}

    We are now in place to state the main result of this section which is an Itô's formula for infinite linear combinations of signature elements. For this, we need to define the following set
    \begin{equation} \label{def:I}
        \I(X) := \set{
        \bell \in \A:
        \begin{matrix}
            \text{for every } \word{i} \in \alphabet \text{ and } \word{j} \in \mathcal{Q}_\word{i}(X), \; \bell \proj{i}, \bell \proj{ji} \in \A \text{ and } \hfill
            \\
            \int_0^T \left( \normA{\bell \proj{i}} \d \abs{F_t^\word{i}} + \normA{\bell \proj{ji}} \d [X^\word{j}, X^\word{i}]_t + \bra{\normA{\bell \proj{i}}}^2 \d [X^\word{i}, X^\word{i}]_t \right) < \infty, \; \text{a.s.} \hfill
        \end{matrix}
        },
    \end{equation}
    which can be seen as the analog of the space  $C^2(\R)$ when one applies Itô's formula on $f(W)$, see Example~\ref{ex:ito} below.

    \begin{theorem}[Itô's formula] \label{thm:sig_ito_formula}
         Let $\bell \in \I(X)$, then $(\bracketsigX{\bell})_{t \leq T}$ is an Itô process such that
         \begin{equation}\label{eq:Ito_formula}
             \bracketsigX{\bell}
             = \bell^\emptyword 
             + \sum_{\word{i} \in \alphabet} \int_0^t \bracketsigX[s]{\bell \proj{i}} \d X_s^\word{i} 
             + \half \sum_{\word{i} \in \alphabet} \sum_{\word{j} \in \alphabet} \int_0^t \bracketsigX[s]{\bell \proj{ji}} \d [X^\word{j}, X^\word{i}]_s,
         \end{equation}
        for all $t \leq T$. In particular, when we take $X_t$ to be $\widehat{W}_t = (t, W^2_t, \ldots, W^d_t)$, with $W$ a $(d-1)$-Brownian motion then, the set $\mathcal I$ reads 
        \begin{equation} \label{eq:I-hatW}
            \I(\widehat{W}) = \set{
            \bell \in \A:
            \begin{matrix}
                \bell \proj{1}, \bell \proj{j}, \bell \proj{jj} \in \A, \;  \word{j} \in \set{\word{2}, \dots, \word{d}}, \text{ and } \hfill
                \\
                \int_0^T \normA{\bell \proj{1}} \d t + \sum_{\word{j} \in \set{\word{2}, \dots, \word{d}}} \int_0^T \bra{\normA{\bell \proj{jj}} + \bra{\normA{\bell \proj{j}}}^2} \d t < \infty, \; \text{a.s.} \hfill
            \end{matrix}
            },
        \end{equation}
        and for every $\bell \in \I(\widehat{W})$:
        \begin{align} \label{eq:ItohatW}
            \bracketsig{\bell}
            = \bell^\emptyword
            + \int_0^t \bracketsig[s]{\bell\proj{1} + \half \sum_{\word{j} \in \set{\word{2}, \dots, \word{d}}} \bell \proj{jj}} \d s
            + \sum_{\word{j} \in \set{\word{2}, \dots, \word{d}}} \int_0^t \bracketsig[s]{\bell \proj{j}} \d W_s^\word{j}.    
        \end{align}
    \end{theorem}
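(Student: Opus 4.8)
The plan is to derive the Itô formula \eqref{eq:Ito_formula} by iterating Lemma~\ref{thm:sig_semimartingale} along the decomposition of $\bell$ provided by the projection operators. First I would invoke Remark~\ref{rem:proj-decomposition} to write
\begin{align*}
    \bell = \bell^\emptyword \emptyword + \sum_{\word{i} \in \alphabet} (\bell \proj{i}) \word{i},
\end{align*}
so that $\bracketsigX{\bell} = \bell^\emptyword + \sum_{\word{i} \in \alphabet} \bracketsigX{(\bell \proj{i}) \word{i}}$ by linearity of the bracket, provided the individual terms make sense. To handle each summand $\bracketsigX{(\bell \proj{i}) \word{i}}$ I would apply Lemma~\ref{thm:sig_semimartingale} with $\bell$ there replaced by $\bell \proj{i}$ and the fixed letter being $\word{i}$; this produces
\begin{align*}
    \bracketsigX{(\bell \proj{i}) \word{i}} = \int_0^t \bracketsigX[s]{\bell \proj{i}} dX_s^\word{i} + \half \sum_{\word{j} \in \alphabet} \int_0^t \bracketsigX[s]{(\bell \proj{i}) \proj{j}} d[X^\word{j}, X^\word{i}]_s,
\end{align*}
and since $(\bell \proj{i}) \proj{j} = \bell \proj{ji}$ (composition of projections corresponds to stripping the word $\word{ji}$ from the right, matching the definition \eqref{eq:projection}), summing over $\word{i}$ yields exactly \eqref{eq:Ito_formula}.

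The key step is therefore checking that the hypotheses of Lemma~\ref{thm:sig_semimartingale} are met for each $\bell \proj{i}$: we need $\bell \proj{i} \in \A$, $(\bell \proj{i})\word{i} \in \A$, the two integrability conditions in \eqref{eq:assumption_3.1}, and for each $\word{j} \in \mathcal{Q}_\word{i}(X)$ that $\bell \proj{ji} \in \A$ together with the integrability of $\normA{\bell \proj{ji}}$ against $d[X^\word{j},X^\word{i}]$. All of these are precisely what the defining conditions of $\I(X)$ in \eqref{def:I} stipulate — except $(\bell \proj{i})\word{i} \in \A$, which I would recover from the identity $\bell = \bell^\emptyword \emptyword + \sum_{\word{i}} (\bell \proj{i})\word{i}$: since $\bell \in \A$ and the other summands $(\bell \proj{k})\word{k}$ for $k \neq i$ are in $\A$ (being finite sums of admissible pieces, or more carefully, since each $\bracketsigX{(\bell\proj{k})\word{k}}$ is controlled once $\bell\proj{k}\in\A$ via Lemma~\ref{thm:sig_semimartingale} applied coordinate-wise), closure of $\A$ under linear combinations gives $(\bell\proj{i})\word{i} \in \A$. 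I would spell this bookkeeping out carefully, as it is the one genuinely non-obvious admissibility reduction.

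For the specialization to $X_t = \widehat{W}_t = (t, W^2_t, \dots, W^d_t)$: here the only non-zero quadratic covariations are $[W^\word{j}, W^\word{j}]_t = t$ for $\word{j} \in \set{\word{2},\dots,\word{d}}$, so $\mathcal{Q}_\word{1}(\widehat W) = \emptyset$ and $\mathcal{Q}_\word{j}(\widehat W) = \set{\word{j}}$ for $\word{j} \neq \word{1}$. Substituting these into \eqref{def:I} immediately collapses it to \eqref{eq:I-hatW} (the conditions on $\bell\proj{ji}$ survive only for $\word{i} = \word{j} \neq \word{1}$, giving $\bell \proj{jj} \in \A$; the finite-variation part of $\widehat W^\word{1}$ is $t$ itself so $d\abs{F^\word{1}_t} = dt$; the other coordinates have no finite variation part). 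Plugging the same covariation structure into \eqref{eq:Ito_formula}, the double sum $\half\sum_{\word{i},\word{j}} \int_0^t \bracketsigX[s]{\bell\proj{ji}} d[X^\word{j},X^\word{i}]_s$ reduces to $\half \sum_{\word{j} \in \set{\word{2},\dots,\word{d}}} \int_0^t \bracketsig[s]{\bell \proj{jj}} ds$, which I would group with the $\word{i}=\word{1}$ drift term $\int_0^t \bracketsig[s]{\bell\proj{1}} ds$ to obtain \eqref{eq:ItohatW}.

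The main obstacle I anticipate is not the algebraic manipulation but the admissibility bookkeeping flagged above: ensuring at each stage that the infinite linear combinations appearing — in particular $(\bell\proj{i})\word{i}$ and the intermediate objects $\bell\proj{ji}$ — actually lie in $\A$ so that Lemma~\ref{thm:sig_semimartingale} is applicable and the dominated-convergence arguments inside it go through. One should also note (as Remark~\ref{rmk:3.5} warns) that Lemma~\ref{thm:sig_semimartingale} gives the identity for each fixed $t$ a.s.; upgrading to "for all $t \leq T$ a.s." as claimed in the theorem statement requires a continuity/modification argument, which I would either reference from Proposition~\ref{Prop:Sh}\ref{P:continuity} or absorb into the definition of $\I(X)$ being strong enough to guarantee the right-hand side of \eqref{eq:Ito_formula} is a continuous process, whence both sides are continuous and agreement on a dense set suffices.
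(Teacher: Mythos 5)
Your architecture coincides with the paper's (the paper's own proof is the one-line instruction to combine Lemma~\ref{thm:sig_semimartingale} with the decomposition of Remark~\ref{rem:proj-decomposition}): split $\bell = \bell^\emptyword \emptyword + \sum_{\word{i}}(\bell\proj{i})\word{i}$, treat each piece with the lemma, use $(\bell\proj{i})\proj{j}=\bell\proj{ji}$, and read off \eqref{eq:I-hatW} and \eqref{eq:ItohatW} from the covariation structure of $\widehat W$. The specialization, the identification $\mathcal{Q}_\word{1}(\widehat W)=\emptyset$, $\mathcal{Q}_\word{j}(\widehat W)=\set{\word{j}}$, and the caveat about ``for each fixed $t$'' versus ``for all $t$'' (Remarks~\ref{rmk:3.5} and \ref{rmk:ito_strong}) are all handled correctly.

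The step you yourself flag as the delicate one is, however, not repaired by your argument. Applying Lemma~\ref{thm:sig_semimartingale} verbatim to $\bell\proj{i}$ requires $(\bell\proj{i})\word{i}\in\A$ (this is exactly the condition $\bell\word{i}\in\A$ in \eqref{eq:assumption_3.1}), and this is neither among the conditions defining $\I(X)$ nor a consequence of $\bell\in\A$: the semi-norm $\normA{\cdot}$ takes absolute values only after summing each level over all of $V_n$, so cancellation between words with different last letters can make $\normA{\bell}$ finite while $\normA{(\bell\proj{i})\word{i}}$ is not; nor is it controlled by $\normA{\bell\proj{i}}$, since there is no pathwise bound of a stochastic integral by its integrand. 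Your proposed derivation is circular — you justify $(\bell\proj{k})\word{k}\in\A$ for $k\neq\word{i}$ ``via Lemma~\ref{thm:sig_semimartingale}'', but the lemma assumes precisely that membership, and the summands $(\bell\proj{k})\word{k}$ are not finite sums of admissible pieces. The correct repair is not to invoke the lemma as a black box letter by letter but to rerun its proof on the full level-truncations $Y^N_t=\sum_{n\le N}\sum_{\wv\in V_n}\bell^\wv\sigX^\wv$: regroup the words of each truncation by their last letter (a finite regrouping for each $N$), apply Lemma~\ref{lem:stratoito} word by word, pass to the limit in the covariation terms by dominated convergence with dominant $\normA{\bell\proj{ji}}$ and in the stochastic integrals by the stochastic dominated convergence theorem with dominant $\normA{\bell\proj{i}}$, and identify the almost sure limit of $Y^N_t$ with $\bracketsigX{\bell}$ using only $\bell\in\A$. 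In this aggregated version the role of ``$\bell\word{i}\in\A$'' in the lemma is played by ``$\bell\in\A$'' for the whole sum, and the admissibility of the individual pieces $(\bell\proj{i})\word{i}$ is never needed.
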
 
    
    \begin{proof}
        The proof follows from an application of Lemma~\ref{thm:sig_semimartingale} combined with the decomposition~\eqref{eq:projection}.
    \end{proof}

    The following example highlights that Theorem \ref{thm:sig_ito_formula} is indeed an extension of the usual Itô's formula on $f(W)$.
    \begin{sqexample} \label{ex:ito}
        Let $d=2$ and $\widehat{W}_t = (t, W_t)$, and fix an analytic function $f(y) := \sum_{n \geq 0} a_n y^n$ with infinite radius of convergence which we apply to $W$. It is clear that 
        $$ f(W_t) = \sum_{n \geq 0} a_n W_t^n = \bracketsig{\bell}, $$
        where
        $$ \bell = \sum_{n \geq 0} a_n \word{2} \shupow{n} = \sum_{n \geq 0} a_n  n! \word{2} \conpow{n}. $$
        
        In particular, the projections read:
        \begin{align*}
            \bell \proj{1} &
            = 0,
            \\ \bell \proj{2} &
            = \sum_{n \geq 0} a_{n+1} (n+1)! \word{2} \conpow{n}
            = \sum_{n \geq 0} a_{n+1} (n+1) \word{2} \shupow{n},
            \\ \bell \proj{22} &
            = \sum_{n \geq 0} a_{n+2} (n+2)! \word{2} \conpow{n}
            = \sum_{n \geq 0} a_{n+2} (n+1) (n+2) \word{2} \shupow{n}.
        \end{align*}
        
        It is easy to verify that $\bell, \bell \proj{2}, \bell \proj{22} \in \A$ since $f$ has infinite radius of convergence. We can thus further derive that 
        \begin{align*}
            f'(W_t) = \bracketsig{\bell \proj{2}},
            \quad 
            f''(W_t) = \bracketsig{\bell \proj{22}}.
        \end{align*}
        
        On the other hand we can see that $\normA{\bell \proj{2}} = \sum_{n=0}^\infty (n+1) |a_{n+1}| \cdot |W_t|^n$, and since $g(x) := \sum_{n \geq 0} (n+1) |a_{n+1}| x^n$ is also analytic and $\normA{\bell \proj{2}}$ has continuous sample path almost surely, then
        $$ \sup_{t \in [0, T]} \normA{\bell \proj{2}} < \infty. $$
        
        With similar arguments we can show that $\sup_{t \in [0, T]} \normA{\bell \proj{22}} < \infty.$ This allows us to verify that $\bell \in \I(\widehat W)$. An application of our  Itô's formula \eqref{eq:ItohatW} yields 
        $$ \bracketsig{\bell} = \bell^\emptyword + \int^t_0 \bracketsig[s]{\bell \proj{2}} \d W_s + \half \int^t_0 \bracketsig[s]{\bell \proj{22}} \d s, $$
        which translates into
        $$ f(W_t) = f(0) + \int_0^t f'(W_s) \d W_s + \half \int_0^t f''(W_s) \d s, $$
        the standard Itô's formula.
    \end{sqexample}

    \begin{sqremark}\label{rmk:ito_strong}
        It should be noted that the identity \eqref{eq:Ito_formula} only holds almost surely for every fixed $t$. However, as remark~\ref{rmk:3.5}, if we additionally assume $\bracketsigX{\bell}$ is continuous, then \eqref{eq:Ito_formula} holds for every $t \in [0, T]$ almost surely. In particular, when $X_t = \widehat{W}_t = (t, W_t^2, \dots, W_t^d)$, and $\bell\in \Ah$ (where $\Ah$ is introduced in Section \ref{subsec:Sh} below), the continuity assumption is satisfied by Proposition \eqref{prop:Ah}-\ref{P:continuity}. Our applications of Theorem \ref{thm:sig_ito_formula} in later section are all in this strong sense. 
    \end{sqremark}
    
    We can easily extend Theorem~\ref{thm:sig_ito_formula} to time-dependent linear functionals, i.e.~$\bell: [0, T] \to \eTA$. For this, let us define
    \begin{equation} \label{def:Iprime}
        \I'(X) := \set{
        \bell: [0, T] \to \A :
        \begin{matrix}
            \text{for every } \word{v} \in V, \: \bell^\word{v} \in C^1([0, T],\R) \text{ and } \hfill
            \\
            \text{for every } \word{i} \in \alphabet, \, \word{j} \in \mathcal Q_\word{i}(X) \text{ and for all } t \in [0, T], \; \bell_t \proj{i}, \bell_t \proj{ji}, \dot{\bell_t} \in \A \text{ and } \hfill
            \\
            \int_0^T \left( \normA{\bell_t \proj{i}} \d \abs{F_t^\word{i}} + \normA{\bell_t \proj{ji}} \d [X^\word{j}, X^\word{i}]_t  + \bra{\normA{\bell_t \proj{i}}}^2 \d [X^\word{i}, X^\word{i}]_t \right) < \infty \; \text{ a.s.} 
        \end{matrix}
        },
    \end{equation}
    where $\dot{\bell_t} = \sum_{n=0}^\infty \sum_{\word{v} \in V_n} \frac{\d}{\d t} \bell_t^\word{v} \word{v}$.

    \begin{corollary} \label{coro:ito}
        Let $\bell \in \I'(X)$, then $(\bracketsigX{\bell_t})_{t \leq T}$ is an Itô process such that 
        \begin{equation}
            \bracketsigX{\bell_t}
            = \bell_t^{\emptyword}
            + \sum_{\word{i} \in \alphabet} \int_0^t \bracketsigX[s]{\bell_s \proj{i}} \d X_s^\word{i}
            + \half \sum_{\word{j} \in \alphabet} \sum_{\word{i} \in \alphabet} \int_0^t \bracketsigX[s]{\bell_s \proj{ji}} \d [X^\word{j}, X^\word{i}]_s
            + \int_0^t\bracketsigX[s]{\dot{\bell}_s} \d s,
        \end{equation}
        for all $t \leq T.$    
    \end{corollary}

    \begin{proof}
        The proof is similar to the proof of Theorem \ref{thm:sig_ito_formula}.
    \end{proof}
    
    \begin{sqremark}\label{rmk:itofrac}
        The example of the fractional Riemann-Liouville Brownian motion, treated in Section~\ref{subsec:gaussianvolterra}, shows that, contrary to the finite case, not all infinite linear combinations of signature elements of $\widehat{W}$ are semimartingales. In particular, Theorem~\ref{thm:sig_ito_formula} and Corollary~\ref{coro:ito} cannot be applied in such cases. 
        
    \end{sqremark}

\subsection{The space \texorpdfstring{$\Ah$}{A\_h} and moment estimates} \label{subsec:Sh}
    
    In practice, it is not an easy task to verify directly whether a given $\bell$ belongs to $\A$, even for the case of the time extended $(d-1)$-dimensional Brownian motion $X_t = \widehat{W}_t = (t, W_t^2, \dots, W_t^d)$. The aim of this subsection is to introduce a subset $\Ah$ of $\A$, specific to such $\widehat{W}$ with $d\geq 2$, by essentially controlling the expectation of its signature $\widehat{\mathbb{W}}$. We will give a tractable criterion to prove that a given $\bell$ belongs to $\Ah$ and thus to $\A$.

    In order to motivate the set $\Ah$, we will derive an upper bound for the moments of the signature elements of $\widehat{\mathbb{W}}$ with the help of Fawcett's formula \cite{fawcett} using a deterministic function $h$. For that and to ease the notations, for every $\wv \in V$ we let $n(\wv)$ denote length of $\wv$, i.e.~$\wv \in V_{n(\wv)}$, and $x(\wv)$ the number of occurrences of the letter $\word{1}$ in $\wv$. In the following, if it does not cause ambiguity, we will write $n$ and $x$ in place of $n(\wv)$ and $x(\wv)$ respectively for brevity. \\
    
    \begin{sqexample}
        Let $\wv = \word{111321}$, then $n = 6$ and $x = 4$.
    \end{sqexample}

    We first provide a tight bound for the second moment:

    \begin{proposition} \label{prop:uniform_bound_L2}
        Fix $t \in [0, T]$ and $\wv \in V$. We have
        \begin{enumerate}[label=\normalfont(\textbf{\roman*})]
            \item \label{prop:uniform_bound_L2:item1}
            \begin{equation} \label{eq:uniform_bound_L2}
                \E \abs{\bracketsig{\wv}}^2
                \leq \binom{2n}{n} \frac{t^{n+x}}{(n+x)!} 2^{x-n},
            \end{equation}
            
            \item \label{prop:uniform_bound_L2:item2}
            and there exists a non-negative constant $C \geq 2$ independent of $\wv$ and $t$, such that
            \begin{equation} \label{eq:uniform_bound_L2_sup}
                \E \sqbra{\sup_{s \in [0, t]} \abs{\bracketsig[s]{\wv}}^2} \leq h_t(\wv)^2,
            \end{equation}
            where
            $$ h_t(\wv) := \frac{C}{({n+1})^{\frac{1}{4}}}{\frac{(2t)^{\frac{1}{2}(n+x)}}{\sqrt{(n+x-1)!}}}, $$
            with the convention that $n! = 1$ for $n \leq 0$. In particular, $h$ satisfies the following sub-multiplicative property
            \begin{align}
                h_t(\wu \wv) \leq h_t(\wu) h_t(\wv), \quad  \wu, \wv \in V.\label{eq:prop:h-uv}
            \end{align}
        \end{enumerate}
    \end{proposition}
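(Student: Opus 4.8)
The plan is to combine the shuffle identity with Fawcett's formula for the expected signature. By the shuffle property (Lemma~\ref{lem:shuffleproperty}) we have $\abs{\bracketsig{\wv}}^2 = \bracketsig{\wv \shuprod \wv}$, and since $\wv \shuprod \wv \in \TA$ has only finitely many nonzero coefficients we may take expectations termwise to get $\E\abs{\bracketsig{\wv}}^2 = \bracket{\wv \shuprod \wv}{\E[\sig]}$. For $\widehat{W}_t = (t, W_t^2, \dots, W_t^d)$, converting the Stratonovich signature SDE into Itô form and taking expectations yields the tensor exponential (Fawcett's formula with the drift term added)
\[
    \E[\sig] \;=\; \sum_{k \geq 0}\frac{1}{k!}\Bigl( t\,\word{1} \;+\; \tfrac{t}{2}\!\!\sum_{\wj \in \set{\word{2}, \dots, \word{d}}}\!\! \word{jj} \Bigr)^{\otimes k}.
\]
Expanding, a word $\ww$ of length $m$ with $x(\ww)$ occurrences of $\word{1}$ has coefficient $0$ in $\E[\sig]$ unless its non-$\word{1}$ letters group into consecutive equal pairs, in which case this grouping is unique and the coefficient equals $\tfrac{1}{k!}t^{x(\ww)}(t/2)^{(m-x(\ww))/2}$ with $k = x(\ww) + (m-x(\ww))/2$. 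Every word occurring in $\wv \shuprod \wv$ has length $2n$ and exactly $2x$ occurrences of $\word{1}$, so each such coefficient is either $0$ or $\tfrac{t^{n+x}}{(n+x)!}2^{x-n}$; since the multiplicities in $\wv \shuprod \wv$ are non-negative and sum to $\binom{2n}{n}$, the bound \eqref{eq:uniform_bound_L2} follows.

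\textbf{Part (ii).}
I would prove $\E\sqbra{\sup_{s \leq t}\abs{\bracketsig[s]{\wv}}^2} \leq h_t(\wv)^2$ by strong induction on the length $n = n(\wv)$, for a suitable absolute constant $C \geq 2$; the cases $n \leq 1$ are checked by hand. For $n \geq 2$, split according to the last letter of $\wv$, using the Itô formula \eqref{eq:ItohatW} applied to the single word $\wv$ to write $\bracketsig{\wv} = M_t + A_t$: if the last letter is $\word{1}$, then $M_t = 0$ and $A_t = \int_0^t\bracketsig[s]{\wv'}\,ds$, where $\wv'$ denotes $\wv$ with the last letter deleted (length $n-1$); if the last letter is $\word{j} \in \set{\word{2}, \dots, \word{d}}$, then $M_t = \int_0^t\bracketsig[s]{\wv'}\,dW_s^{\word{j}}$ (a true $L^2$-martingale, since $\int_0^T\E\abs{\bracketsig[s]{\wv'}}^2\,ds < \infty$ by part (i)) and $A_t = \tfrac12\int_0^t\bracketsig[s]{\wv\proj{jj}}\,ds$, which is nonzero only if $\wv$ ends in $\word{jj}$, in which case $\wv\proj{jj}$ is a single word of length $n-2$. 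Then $\E\sqbra{\sup_{s\leq t}\abs{\bracketsig[s]{\wv}}^2} \leq 2\,\E\sqbra{\sup_{s\leq t}M_s^2} + 2\,\E\sqbra{\sup_{s\leq t}A_s^2}$; bound the first term by Doob's $L^2$ inequality as $4\int_0^t\E\abs{\bracketsig[s]{\wv'}}^2\,ds$ and the second by Cauchy--Schwarz as $t\int_0^t\E\abs{\bracketsig[s]{\bm\cdot}}^2\,ds$ with the integrand a word of length $n-1$ or $n-2$. Substituting the already-proved estimate \eqref{eq:uniform_bound_L2} for the strictly shorter words $\wv'$ and $\wv\proj{jj}$, carrying out the elementary $s$-integrals, and invoking the sharp bound $\binom{2m}{m} \leq 4^m/\sqrt{\pi m}$ for $m \geq 1$, a short computation shows the right-hand side is $\leq h_t(\wv)^2$ once $C$ is large enough, closing the induction.

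\textbf{Sub-multiplicativity, and the main difficulty.}
For \eqref{eq:prop:h-uv}, note that the powers of $t$ in $h_t(\wu\wv)$ and $h_t(\wu)h_t(\wv)$ coincide, so — the cases where $\wu$ or $\wv$ is empty being immediate — the inequality reduces to
\[
    \Bigl(\tfrac{(n_u+1)(n_v+1)}{n_u+n_v+1}\Bigr)^{1/4}\Bigl(\tfrac{(n_u+x_u-1)!\,(n_v+x_v-1)!}{(n_u+x_u+n_v+x_v-1)!}\Bigr)^{1/2} \;\leq\; C,
\]
where $n_u,n_v$ and $x_u,x_v$ are the lengths and numbers of occurrences of $\word{1}$ in $\wu,\wv$; this holds with an absolute constant $\leq 2 \leq C$, using $\tfrac{(a-1)!(b-1)!}{(a+b-1)!} \leq \tfrac{1}{a+b-1}$ (a rearranged binomial coefficient) and $\tfrac{(n_u+1)(n_v+1)}{n_u+n_v+1} \leq \min(n_u,n_v)+1$. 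The step I expect to be the main obstacle is the induction in part (ii): the crude bound $\binom{2m}{m}\leq 4^m$ forces $C$ to grow with $n$, so one genuinely needs the $\sqrt{m}$ gain in Stirling's estimate of the central binomial coefficient, and one must split $\bracketsig{\wv}$ into its martingale and drift parts (controlled by Doob's inequality and Cauchy--Schwarz respectively) rather than estimating the Stratonovich integral directly, so that the drift contribution is always carried by a strictly shorter word and the recursion can close.
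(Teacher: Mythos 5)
Your proposal is correct and follows essentially the same route as the paper: part (i) via Fawcett's formula applied to $\wv \shuprod \wv$ (the paper does the same computation for general $p$-th moments and specializes), part (ii) by splitting on the last letter of $\wv$ and combining Doob's $L^2$ inequality with Cauchy--Schwarz so that only the second-moment bound of part (i) for strictly shorter words is needed, and the same central-binomial/factorial estimates for the sub-multiplicativity of $h_t$. The induction framing in (ii) is superfluous --- as your own writeup shows, the recursion only ever invokes the part (i) bound for the shorter words $\wv'$ and $\wv\proj{jj}$, never the inductive hypothesis on the supremum --- which is exactly how the paper organizes the two cases.
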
        
            
    \begin{proof}
        The proof is given in Appendix \ref{app:gen_results}.
    \end{proof}
   
    Based on Proposition~\ref{prop:uniform_bound_L2}, an application of Jensen's inequality yields
    \begin{equation} \label{eq:L1_uniform_bound_of_sig}
        \E \sqbra{\sup_{s \in [0, t]} \abs{\bracketsig[s]{\wv}}} \leq h_t(\wv).
    \end{equation}
    
    We are now ready to introduce a norm on $\eTA$ using the function $h$: 
    \begin{align} \label{eq:normSh}
       \normh{\bell} := \sum_{n=0}^\infty \sum_{\wv \in V_n} \abs{\bell^\wv} h_t(\wv), \quad \bell \in \eTA.  
    \end{align}

    Unlike $\norm{\cdot}^\A$, $\norm{\cdot}^\Ah$ is deterministic, thus simplifying the computation. Moreover, notice that $\normh{\cdot}$ is an increasing function of $t$, which naturally leads to the definition of the normed vector space:
    $$ \Ah := \set{ \bell \in \eTA: \normh[T]{\bell} < \infty }. $$
    {\revtwo
    \begin{sqremark}
        We can see that $\Ah$ is a Banach space by simply constructing an isometry from $\Ah$ to \allowbreak $\set{(a_i)_{i=0}^\infty: \sum_{i=0}^\infty \abs{a_i} < \infty}$, and in particular it is a Köthe sequence space, see \cite{kothe_space, Ramos-Fernandez2017}.
    \end{sqremark}
    }
    \begin{proposition} \label{prop:Ah}
        Fix $\bell \in \Ah$. The following properties hold:
        \begin{enumerate}[label=\normalfont(\textbf{\roman*})]
            \item
            $\Ah \subset \A$, and
            \begin{align}
                \E \sqbra{\sup_{s \in [0, t]} \normA[s]{\bell}} &
                \leq \normh{\bell}, \quad t \leq T.
            \end{align}
            
            \item \label{lem:to_verify_stochastic_fubini}
            $\Ah$ induces a natural upper bound
            $$ \E \sqbra{\sup_{s \in[0, t]} \bracketsig[s]{\bell}^2} \leq \bra{\normh{\bell}}^2, \quad t \leq T. $$
            
            \item \label{P:continuity}
            $t \mapsto \bracketsig{\bell}$ is continuous and the strong error of convergence of the truncated linear combination of the signature elements satisfies
            \begin{align} \label{eq:boundconvergenceh} 
                \E \sqbra{\sup_{t \in [0, T]} \abs{\bracketsigtrunc[M]{\bell} - \bracketsig{\bell}}} 
                \leq \sum_{n=M}^\infty \sum_{\wv \in V_n} \abs{\bell^\wv} h_T(\wv), \quad M \in \N.
            \end{align}
        \end{enumerate}
    \end{proposition}

    \begin{proof}
        \begin{enumerate}[label=\normalfont(\textbf{\roman*})]
            \item
            For all $\bell \in \Ah$, the inequality  \eqref{eq:L1_uniform_bound_of_sig} yields 
            \begin{align}
                \E \sqbra{\sup_{s \in [0, t]} \normA[s]{\bell}} 
                \leq \sum_{n=0}^\infty \sum_{\wv \in V_n} \abs{\bell^\wv} \E \sqbra{\sup_{s \in [0, t]} \abs{\bracketsig[s]{\wv}}}
                \leq \sum_{n=0}^\infty \sum_{\wv \in V_n} \abs{\bell^\wv}h_t(\wv)
                \leq \normh{\bell} < \infty.
            \end{align}
            Consequently, $\sup_{t \in [0, T]} \normA{\bell}< \infty \text{ a.s.}$ and thus $\bell \in \A(\widehat{\mathbb{W}})$.
            
            \item
            Starting with
            \begin{align*}
               \bracketsig[s]{\bell}^2 &
                = \bra{\sum_{\wv \in V} \bell^\wv \bracketsig[s]{\wv}}^2
                \leq \sum_{\wu \in V} \sum_{\ww \in V} \abs{\bell^\wu \bell^\ww} \cdot \abs{\bracketsig[s]{\wu} \bracketsig[s]{\ww}},
            \end{align*}
            we get that 
            $$ \sup_{s \in [0, t]} \bracketsig[s]{\bell}^2 \leq  \sum_{\wu \in V} \sum_{\ww \in V} \abs{\bell^\wu \bell^\ww} \cdot \sup_{s \in [0, t]} \abs{\bracketsig[s]{\wu}} \cdot \sup_{s \in [0, t]} \abs{\bracketsig[s]{\ww}}.$$
            Taking the expectation and applying Cauchy-Schwarz inequality yields that  
            \begin{align*}
                \E \left[ \sup_{s \in [0, t]} \bracketsig[s]{\bell}^2 \right] &
                \leq \sum_{\wu \in V} \sum_{\ww \in V} \abs{\bell^\wu \bell^\ww} \sqrt{\E \sup_{s \in [0, t]} \bracketsig[s]{\wu}^2} \sqrt{\E\sup_{s \in [0, t]} \bracketsig[s]{\ww}^2}
                \\ &
                = \bra{\sum_{\wv \in V} \abs{\bell^\wv} \sqrt{\E \sup_{s\in[0,t]}\bracketsig[s]{\wv}^2}}^2
                \\ &
                \leq  \bra{\sum_{\wv \in V} \abs{\bell^\wv} h_t(\wv)}^2
                \\ &
                = \bra{\normh{\bell}}^2.
            \end{align*}
            \item
            Define
            \begin{align}
                X_t :
                = \bracketsig{\bell} = \sum_{n=0}^\infty \sum_{\wv \in V_n} \bell^\wv \sig^\wv
                \quad \text{and} \quad
                X_t^M :
                = \bracketsigtrunc[M]{\bell} = \sum_{n=0}^M \sum_{\wv \in V_n} \bell^\wv \sig^\wv.
            \end{align}
            It follows from inequality \eqref{eq:L1_uniform_bound_of_sig} that 
            \begin{align}
                \E \sqbra{\sup_{t \in [0, T]} \abs{X_t - X_t^M}} &
                \leq \sum_{n=M+1}^\infty \sum_{\wv \in V_n} \abs{\bell^\wv} \E \sqbra{\sup_{t \in [0, T]} \bracketsig{\wv}}
                \leq \sum_{n=M+1}^\infty \sum_{\wv \in V_n} \abs{\bell^\wv} h_T(\wv).
            \end{align}
            
            Moreover, $\bell \in \Ah$ implies that
            $$ \lim_{M \to \infty} \sum_{n=M+1}^\infty \sum_{\wv \in V_n} \abs{\bell^\wv} h_T(\wv) = 0. $$
            
            Thus
            \begin{equation}
                \E \sqbra{\sup_{t \in [0, T]} \abs{X_t - X_t^M}} \to 0,
            \end{equation}
            implying that
            $$ \sup_{t \in [0, T]} \abs{X_t - X_t^M} \to 0 \quad \text{in probability as $M \to \infty$.} $$
            
            So there exists a sub-sequence $(M_k)_{k \in \N}$ such that $\sup_{t \in [0, T]} \abs{X_t - X_t^{M_k}} \to 0$ a.s. Moreover, as $X^{M_k}$ is continuous, $X$ is also continuous.
        \end{enumerate}
    \end{proof}

    Derived from \eqref{eq:prop:h-uv}, the next proposition shows that $\norm{\cdot}^\Ah$ is sub-multiplicative for the tensor product.
    
    \begin{proposition} \label{prop:submultiplicativity_Sh}
        For every $\bell, \bphi \in \eTA$,
        $$ \normh{\bell \bphi} \leq \normh{\bell} \normh{\bphi}. $$
        In particular, if $\bell, \bphi \in \Ah$, then $\bell \bphi \in \Ah$.
    \end{proposition}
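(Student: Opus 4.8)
The plan is to unwind the definition \eqref{eq:normSh} of $\normh{\cdot}$, pass the coefficients of the concatenation product through a triangle inequality, invoke the sub-multiplicativity \eqref{eq:prop:h-uv} of $h_t$, and then recognize the resulting double sum as a product of two series after a bijective reindexing of the word splittings.

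First I would record the combinatorial description of the concatenation product in the word basis: for $\wv \in V_n$ one has $(\bp \bell)^\wv = \sum_{\wu \ww = \wv} \bp^\wu \bell^\ww$, where the sum ranges over the $n+1$ ways of writing $\wv$ as a prefix $\wu$ followed by a suffix $\ww$. Equivalently, the concatenation map $(\wu, \ww) \mapsto \wu \ww$ from $V \times V$ to $V$ is such that each $\wv$ has exactly $n(\wv)+1$ preimages, so that $\sum_{\wv \in V} \sum_{\wu \ww = \wv} a(\wu,\ww) = \sum_{\wu \in V} \sum_{\ww \in V} a(\wu,\ww)$ for any non-negative function $a$. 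The triangle inequality then gives $\abs{(\bp \bell)^\wv} \le \sum_{\wu \ww = \wv} \abs{\bp^\wu}\,\abs{\bell^\ww}$.

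Assuming (without loss of generality, since otherwise the bound is trivial) that $\normh{\bp}$ and $\normh{\bell}$ are finite, I would then chain the estimates, all terms being non-negative so that Tonelli applies freely:
\[
\normh{\bp \bell} = \sum_{\wv \in V} \abs{(\bp\bell)^\wv}\, h_t(\wv) \le \sum_{\wv \in V} \sum_{\wu \ww = \wv} \abs{\bp^\wu}\,\abs{\bell^\ww}\, h_t(\wu \ww) \le \sum_{\wv \in V} \sum_{\wu \ww = \wv} \abs{\bp^\wu}\,\abs{\bell^\ww}\, h_t(\wu)\, h_t(\ww),
\]
where the last step uses \eqref{eq:prop:h-uv}. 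Reindexing the double sum over $\wv$ and its splittings as a sum over $(\wu,\ww) \in V \times V$ yields
\[
\normh{\bp \bell} \le \sum_{\wu \in V} \sum_{\ww \in V} \abs{\bp^\wu} h_t(\wu)\, \abs{\bell^\ww} h_t(\ww) = \bra{\sum_{\wu \in V} \abs{\bp^\wu} h_t(\wu)} \bra{\sum_{\ww \in V} \abs{\bell^\ww} h_t(\ww)} = \normh{\bp}\, \normh{\bell}.
\]
The ``in particular'' statement follows at once by taking $t = T$: if $\bp, \bell \in \Sh$ then the right-hand side is finite, hence $\normh[T]{\bp\bell} < \infty$, i.e.\ $\bp \bell \in \Sh$. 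I do not anticipate a genuine obstacle; the only points needing mild care are the bijective reindexing of prefix/suffix splittings and the harmless observation that the degenerate case of an infinite norm makes the claim vacuous, so Tonelli may be applied without qualification.
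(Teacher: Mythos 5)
Your proposal is correct and follows essentially the same route as the paper's proof: expand $(\bp\bell)^{\wv}$ over prefix/suffix splittings, apply the triangle inequality, invoke the sub-multiplicativity \eqref{eq:prop:h-uv} of $h_t$, and reindex the non-negative double sum (Tonelli) to factor it as $\normh{\bp}\,\normh{\bell}$. The only difference is notational — the paper organizes the same reindexing by word lengths $k$ and $n$ rather than by splittings — so there is nothing further to add.
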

    
    \begin{proof}
        First notice that for all $k \in \N$ and $\ww \in V_k$, 
        $$ \bracket{\bell \bphi}{\ww} = \sum_{n=0}^k \sum_{\wu \in V_n} \sum_{\wv \in V_{k-n}} \bell^\wu \bphi^\wv \bracket{\wu \wv}{\ww}, $$
        so that
        \begin{align}
            \normh{\bell \bphi} &
            = \sum_{k=0}^\infty \sum_{\ww \in V_k} \abs{\bracket{\bell \bphi}{\ww}} h_t(\ww)
            \\ &
            = \sum_{k=0}^\infty \sum_{\ww \in V_k} \abs{\sum_{n=0}^k \sum_{\wu \in V_n} \sum_{\wv \in V_{k-n}} \bell^\wu \bphi^\wv \bracket{\wu \wv}{\ww}} h_t(\ww)
            \\ &
            \leq \sum_{k=0}^\infty \sum_{n=0}^k \sum_{\wu \in V_n} \sum_{\wv \in V_{k-n}} \sum_{\ww \in V_k} \abs{\bell^\wu \bphi^\wv} \bracket{\wu \wv}{\ww} h_t(\ww)
        \end{align}
        
        Then, using \eqref{eq:prop:h-uv}
        \begin{align}
            \normh{\bell \bphi} &
            \leq \sum_{k=0}^\infty \sum_{n=0}^k \sum_{\wu \in V_n} \sum_{\wv \in V_{k-n}} \abs{\bell^\wu \bphi^\wv} h_t(\wu) h_t(\wv) \sum_{\ww \in V_k} \bracket{\wu \wv}{\ww}
            \\ &
            = \sum_{m=0}^\infty \sum_{n=0}^\infty \sum_{\wu \in V_n} \sum_{\wv \in V_m} \abs{\bell^\wu \bphi^\wv} h_t(\wu) h_t(\wv)
            \\ &
            = \normh{\bell} \normh{\bphi}.
        \end{align}
    \end{proof}

        \begin{sqremark} \label{rmk:benarous}
        Compared to the literature, our estimate for the supremum norm \eqref{eq:uniform_bound_L2_sup} appears to be used for the first time and plays a crucial role in our convergence analysis. The closest related results appeared in \citet{arous1989flots} where moment estimates of $\E | \langle \wv, \sig \rangle |^2$ and certain convergence criterion for infinite series of signatures of $\widehat{W}$, i.e.~iterated sequence of Stratonovich integrals, are established. The estimate in \cite[Lemma 2]{arous1989flots} has the same order as \eqref{eq:uniform_bound_L2} given here, although the constant there is not as sharp as ours (since we make use of Fawcett's formula) but has the same order of magnitude (see the remark following \cite[Lemma 3]{arous1989flots}). We also provide estimates for $\E | \langle \wv, \sig \rangle |^p$ for every $p \in \N_+$, see proposition \ref{prop:uniform_boundary}. But most importantly, we provide an estimate for $\E [\sup_{s \in [0, t]} | \langle \wv, \sig[s] \rangle |^2]$ in \eqref{eq:uniform_bound_L2_sup}, which is necessary to prove convergence for all $t$ almost surely instead of simply for a fixed $t$, when compared to the convergence criterion in \cite[Theorem 2 (1)]{arous1989flots}.
        Additionally, the second criteria \cite[Theorem 2 (2)]{arous1989flots} requires the second order norm to be bounded, we only require the first order norm $\normh{\cdot}$ to be bounded, which is hence milder.
    \end{sqremark}

\subsection{Back to the linear algebraic equation}

    We now prove that the solution $\bell$ to the linear algebraic equation \eqref{eq:resolvent} belongs to $\Ah$ under suitable conditions on $\bp$ and $\bq$, see Theorem~\ref{thm:ell_in_Sh_if_q_dominated}. This is a crucial ingredient in obtaining convergence of our representations formula $\bracketsig{\bell}$ in Section~\ref{sec:representations}. For this, we start by introducing the following partial order:
   
    \begin{definition} \label{def:partial_order}
        Let $\bell, \bphi \in \eTA$. We say that $\bell$ is dominated by $\bphi$ if for all $\wv \in V$, $\abs{\bell^\wv} \leq \bphi^\wv$. We then write
        $$ \bell \dominated \bphi. $$
    \end{definition}

    \begin{theorem} \label{thm:ell_in_Sh_if_q_dominated}
        Let $\bp \in \Ah$, let $\bq \in \eTA$ be such that $\abs{\bq^\emptyword} < 1$ and assume there exists $\bphi \in \eTA$ such that
        \begin{align}
            \bq \dominated \bphi
            \quad \text{and} \quad
            \res{\bphi} \in \Ah.
        \end{align}
        Then, the resolvent $\res{\bq}$ is well-defined and belongs to $\Ah$. Furthermore, the unique solution $\bell \in \eTA$ to the linear equation
        $$ \bell = \bp + \bell \bq $$
        belongs to $\Ah$.
    \end{theorem}
    
    In order to prove Theorem~\ref{thm:ell_in_Sh_if_q_dominated}, we will first show that the resolvent is order-preserving. That is:
    \begin{lemma} \label{lem:resolvent_order_preserving}
        Let $\bq, \bphi \in \eTA$ such that $\abs{\bq^\emptyword}, \bphi^\emptyword < 1$ and $\bq \dominated \bphi$, then
        $$ \res{\bq} \dominated \res{\bphi}. $$
    \end{lemma}

    \begin{proof}
        We will first rewrite the equation of the resolvent \eqref{nota:resolvent} as
        \begin{align}
            \res{\bq} &
            = \sum_{k=0}^\infty \bq \conpow{k}
            = \sum_{k=0}^\infty \bra{\sum_{m=0}^\infty \sum_{\wv \in V_m} \bq^\wv \wv} \conpow{k}
            = \sum_{m=0}^\infty \sum_{\wv \in V_m} f_\wv (\bq^\wu, n(\wu) \leq m) \wv \label{eq:res_poly}
        \end{align}
        
        where $f_\wv$ is a polynomial function of degree $n$ for all $\wv \in V_n$, which is independent of $\bq$ and has $\frac{d^{n+1}-1}{d-1}$ variables $\set{\bq^\wu: \abs{\wu} \leq \abs{\wv}}$. We also notice that the coefficients of $f_{\wv}$ are all non-negative integers. (For example, $f_\word{11}(\bq^\wu, |\wu| \leq 2) = 2 \bq^\emptyword \bq^\word{11} + (\bq^\word{1})^2$ and $f_\word{12}(\bq^\wu, |\wu| \leq 2) = 2 \bq^\emptyword \bq^\word{12} + 2 \bq^\word{1} \bq^\word{2}$.) 
        Now recall the Definition~\ref{def:partial_order} where $\bq \dominated \bphi$ implies $\abs{\bq^\wv} \leq \bphi^\wv$ for all $\wv \in V$. Combined this with the fact that $f$ has non-negative coefficients, it is easy to see that, for all $\wv \in V_m$ we have
        $$ \abs{f_\wv (\bq^\wu, n(\wu) \leq m)} \leq f_\wv (\bphi^\wu, n(\wu) \leq m), $$
        which ends the proof.
    \end{proof}
    
    We can now prove Theorem~\ref{thm:ell_in_Sh_if_q_dominated}.
    \begin{proof}[Proof of Theorem~\ref{thm:ell_in_Sh_if_q_dominated}]
        By assumption on $\bq$ and by Lemma~\ref{lem:resolvent_order_preserving}, $\res{\bq}$ is well-defined and belongs to $\Ah$. Moreover, by Proposition~\ref{prop:resolvent-solution}
        \begin{align}
            \bell = \bp \inverse{\emptyword - \bq}.
        \end{align}
        Finally, by assumption on $\bp$ and Proposition~\ref{prop:submultiplicativity_Sh}, it is clear that $\bell \in \Ah$.
    \end{proof}

    {\revtwo
\subsection{The space \texorpdfstring{$\Aexp$}{A\_exp} and exponentially dominated functionals} \label{subsec:Aexp}
    
    This final subsection is dedicated to the use of shuffle exponentials to dominate linear functionals. Shuffle exponentials allow for straightforward computations and it is easy to show their inclusion in $\Ah$, see Lemma~\ref{lem:shuexp_in_Sh}. Moreover, they allow for the definition of the space $\Aexp \subset \Ah$ of linear functionals dominated by a shuffle exponential. This space holds interesting properties, among them, $\Aexp$ is closed under the projection operation, allowing for the inclusion of $\Aexp$ in $\I(\widehat{W})$, see Corollary~\ref{coro:pq_Aexp_implies_ell_Aexp}, making it much easier to prove that a linear functional is an Itô process. \\
    }
    
    First, we prove that $\shuexp{\bm{b}} \in \Ah$:
    \begin{lemma} \label{lem:shuexp_in_Sh}
        Let $\bm{b} = \sum_{\word{i} \in \alphabet} \bm{b}^\word{i} \word{i}$ with $\bm{b}^\word{i} \in \R$, then
        $$ \shuexp{\bm{b}} = \res{\bm{b}} \in \Ah. $$
    \end{lemma}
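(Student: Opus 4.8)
The plan is to estimate directly the $\Sh$-norm of $\shuexp{\bm{b}}$ by computing, word by word, the coefficient $(\shuexp{\bm{b}})^\wv$ and then summing the products $|(\shuexp{\bm{b}})^\wv|\, h_T(\wv)$ over all words. First I would recall, via Proposition~\ref{prop:resolvent-shuexp}, that since $\bm b$ is a linear combination of single letters with $\bm b^\emptyword=0$, the resolvent $\res{\bm b}$ is well defined by Lemma~\ref{apn:resolvent} and equals $\shuexp{\bm b}=\sum_{k\ge 0}\bm b\conpow{k}$. Hence $(\shuexp{\bm b})^\wv = \prod_{m=1}^{n}\bm b^{\word{i_m}}$ whenever $\wv=\word{i_1\cdots i_n}$, and in particular $|(\shuexp{\bm b})^\wv|\le \beta^{n(\wv)}$ where $\beta:=\max_{\word i\in\alphabet}|\bm b^{\word i}|$.

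Next I would plug this into the definition \eqref{eq:normSh} of $\normh[T]{\cdot}$ and use the explicit form of $h_T$ from Proposition~\ref{prop:uniform_bound_L2}\ref{prop:uniform_bound_L2:item2}. Grouping words of length $n$ by the number $x$ of occurrences of the letter $\word 1$, there are $\binom{n}{x}(d-1)^{\,n-x}$ such words, each contributing at most $\beta^{n} h_T(\wv) = \beta^n\, C (n+1)^{-1/4}(2T)^{(n+x)/2}/\sqrt{(n+x-1)!}$. So
\begin{align}
\normh[T]{\shuexp{\bm b}}
&\le C\sum_{n=0}^\infty \frac{\beta^n}{(n+1)^{1/4}}\sum_{x=0}^{n}\binom{n}{x}(d-1)^{n-x}\frac{(2T)^{(n+x)/2}}{\sqrt{(n+x-1)!}}.
\end{align}
Since $(n+x-1)!\ge \lfloor (n+x)/2\rfloor!\,$ grows super-exponentially in $n$ (because $n+x\ge n$), and the inner $x$-sum is bounded by $\bra{1+d}^{n}$ times a bounded factor, the ratio of consecutive terms tends to $0$; the series converges by the ratio test. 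This gives $\shuexp{\bm b}\in\Sh$.

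The main obstacle is making the factorial bookkeeping precise: one must check that the factor $1/\sqrt{(n+x-1)!}$, with $x$ possibly as small as $0$, still beats the exponential growth $\beta^n (d-1)^{n-x}(2T)^{(n+x)/2}\binom{n}{x}$ coming from the number of words and from $\beta$, $d$, $T$. The cleanest way is to bound $\sum_{x=0}^n\binom nx (d-1)^{n-x}(2T)^{(n+x)/2}/\sqrt{(n+x-1)!}$ crudely by $(2T)^{n}\, d^{\,n}\max_{0\le x\le n}1/\sqrt{(n+x-1)!}\le (2Td)^n/\sqrt{(n-1)!}$ and then invoke convergence of $\sum_n \beta^n(2Td)^n/\sqrt{(n-1)!}$, which is immediate since $\sqrt{(n-1)!}$ dominates any geometric sequence. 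One should also record for later use (towards Theorem~\ref{thm:ell_in_Sh_if_q_dominated}) that the same computation yields a bound on $\normh[T]{\shuexp{\bm b}}$ that is finite and monotone in $T$, so the argument is uniform on $[0,T]$; this follows because $h_t$ is increasing in $t$, as noted after \eqref{eq:normSh}.
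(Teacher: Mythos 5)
Your proposal is correct and follows essentially the same route as the paper: identify $\shuexp{\bm b}$ with $\res{\bm b}=\sum_{k\ge 0}\bm b\conpow{k}$ via Proposition~\ref{prop:resolvent-shuexp}, read off the coefficients word by word, group the words of length $n$ by the number $x$ of occurrences of $\word{1}$, and show the resulting double series for $\normh[T]{\shuexp{\bm b}}$ converges because $1/\sqrt{(n+x-1)!}$ dominates all geometric factors. The only (harmless) slip is the bound $(2T)^{(n+x)/2}\le (2T)^n$, which fails when $2T<1$ and should read $\max(1,2T)^n$; otherwise your crude $\max_{0\le x\le n}$ estimate is just a coarser version of the paper's factorisation of the double sum into a product of two everywhere-convergent series via Stirling's formula, and both yield the conclusion.
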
    

    \begin{proof}
      First, the equality is ensured by Proposition~\ref{prop:resolvent-shuexp}, so it suffices to prove $\res{\bm{b}}\in\Ah$. For simplicity, we provide a proof for the case where $d = 2$, the extension to the general case follows along the same lines. For a word $\wv \in V$, we recall the notations $n(\wv)$ and $x(\wv)$ that denote the length of $\wv$ and the number of $\word{1}$ in $\wv$ respectively. We also recall that $h_T(\wv)$ depends on $\wv$ only through $n(\wv)$ and $x(\wv)$, meaning that for any $\wu \in V$, $n(\wv) = n(\word{u})$ and $x(\wv) = x(\wu)$ imply $h_T(\wv) = h_T(\wu)$. Therefore, assuming $\bm{b} = a \word{1} + b \word{2}$ and $\bell = \res{(a \word{1} + b \word{2})}$, we can decompose $\normh[T]{\bell}$ as:
        \begin{align}
            \normh[T]{\bell} &
            = \sum_{n=0}^\infty \sum_{\wv \in V_n} \abs{\bell^\wv} h_T(\wv)
            \\ &
            = \sum_{n=0}^\infty \sum_{k=0}^n \binom{n}{k} \abs{a}^k \abs{b}^{n-k} h_T(\word{1} \conpow{k} \word{2} \conpow{n-k})
            \\ &
            = \sum_{m=0}^\infty \sum_{k=0}^\infty \binom{k+m}{k} \abs{a}^k \abs{b}^m h_T(\word{1} \conpow{k} \word{2} \conpow{m})
            \\ &
            = \sum_{m=0}^\infty \sum_{k=0}^\infty \frac{(k+m)!}{k!~m!} \abs{a}^k \abs{b}^m \frac{C}{({k+m+1})^{\frac{1}{4}}}{\frac{(2T)^{k + \frac{m}{2}}}{\sqrt{(2k+m-1)!}}}
            \\ &
            = C \sum_{m=0}^\infty \sum_{k=0}^\infty \frac{\abs{2T a}^k}{k!} \frac{\abs{\sqrt{2T} b}^m}{\sqrt{m!}} ({k+m+1})^{-\frac{1}{4}} \sqrt{\frac{(k+m)!^2}{m!(2k+m-1)!}}.
        \end{align}
        After remarking that $({k+m+1})^{-\frac{1}{4}} \leq 1$ for all $k, m \in \N$, we can shift our attention to the square root on the right-hand side:
        \begin{align}
            \frac{(k+m)!^2}{m! (2k+m-1)!} &
            = \frac{\prod_{j=1}^k (m+j)}{\prod_{j=1}^{k-1} (k+m+j)}
            \leq k + m +1
            \leq (k+1) (m+1).
        \end{align}
        This gives
        \begin{align}
            \normh[T]{\bell} &
            \leq C \sum_{m=0}^\infty \bra{\sqrt{\frac{m+1}{m!}} \abs{\sqrt{2T} b}^m} \sum_{k=0}^\infty \bra{\frac{\sqrt{k+1}}{k!} \abs{2T a}^k}.
        \end{align}
        Now, thanks to Stirling's formula,
        $$ \frac{\sqrt{k+1}}{k!} \sim \bra{\frac{e}{k}}^k \quad \text{and} \quad \sqrt{\frac{m+1}{m!}} \sim m^{\frac{1}{4}} \bra{\frac{e}{m}}^{\frac{m}{2}} $$
        so it is clear that both $\sum_{k \geq 0} \frac{\sqrt{k+1}}{k!} x^k$ and $\sum_{m \geq 0} \sqrt{\frac{m+1}{m!}} y^m$ have an infinite radius of convergence, hence proving
        $$ \normh[T]{\bell} < \infty. $$
    \end{proof}

    {\revtwo
    Naturally, we will define the set $\Aexp$ of all linear functionals that are \textit{exponentially dominated}:
    \begin{align}
        \Aexp := \set{\bell: \text{there exists $C \in \R_+$ such that } \bell \dominated C \shuexp{C \sum_{\wi \in \alphabet} \wi}}.
    \end{align}

    In dimension 1, $\Aexp$ represents the power series that have a factorial decay. Obviously $\Aexp \subset \Ah$ thanks to Lemma~\ref{lem:shuexp_in_Sh}. 

    \begin{proposition} \label{prop:Aexp_closed}
        $\Aexp$ is closed under linear operations, shuffle product, concatenation, resolvent and projection. That is, let $\bell, \bphi \in \Aexp$, then
        \begin{multicols}{2}
            \begin{enumerate}[label=\normalfont(\textbf{\roman*})]
                \item \label{prop:Aexp_closed_linear} $\alpha \bell + \beta \bphi \in \Aexp$ for $\alpha, \beta \in \R$,
                \item \label{prop:Aexp_closed_proj} $\bell \proj{u} \in \Aexp$ for all $\wu \in V$,
            \end{enumerate}
        \end{multicols}
        \begin{multicols}{2}
            \begin{enumerate}[label=\normalfont(\textbf{\roman*})]
                \setcounter{enumi}{2}
                \item \label{prop:Aexp_closed_shuffle} $\bell \shuprod \bphi \in \Aexp$,
                \item \label{prop:Aexp_closed_concat} $\bell \bphi \in \Aexp$,
            \end{enumerate}
        \end{multicols}
        \begin{enumerate}[label=\normalfont(\textbf{\roman*})]
            \setcounter{enumi}{4}
            \item \label{prop:Aexp_closed_resolvent} If in addition $\bell^\emptyword \neq 1$, $\res{\bell} \in \Aexp$.
        \end{enumerate}
    \end{proposition}

    \begin{proof}
        To simplify notations, let us define $\bm{b} := \sum_{\wi \in \alphabet} \wi$. Moreover, let $\alpha, \beta \in \R$ and $C_{\bell}, C_{\bphi} > 1$, and let $\bell, \bphi \in \Aexp$ be such that $\bell \dominated C_{\bell} \shuexp{C_{\bell} \bm{b}}$ and $\bphi \dominated C_{\bphi} \shuexp{C_{\bphi} \bm{b}}$.
        \begin{itemize}
            \item[\ref{prop:Aexp_closed_linear}] Clearly, one has $\alpha \bell + \beta \bphi \dominated (|\alpha| C_{\bell} + |\beta| C_{\bphi}) \shuexp{(C_{\bell} \vee C_{\bphi}) \bm{b}}$,
            
            \item[\ref{prop:Aexp_closed_proj}] Using Proposition~\ref{prop:resolvent-shuexp}, one also has $\bell \proj{u} \dominated (C_{\bell})^{n+1} \shuexp{C_{\bell} \bm{b}}$ for $\wu \in V_n$ and $n \in \N$,
            
            \item[\ref{prop:Aexp_closed_shuffle}] Using \cite[Lemma 2.10]{cuchiero2023signature} on the shuffle product of shuffle exponentials, one can deduce
            $$ \bell \shuprod \bphi \dominated C_{\bell} \shuexp{C_{\bell} \bm{b}} \shuprod C_{\bphi} \shuexp{C_{\bphi} \bm{b}} \dominated (C_{\bell} C_{\bphi}) \shuexp{(C_{\bell} + C_{\bphi}) \bm{b}}, $$
            
            \item[\ref{prop:Aexp_closed_concat}] Seeing that the first (and last) terms in the shuffle product are simple concatenations of the two functionals, one can conveniently dominate a concatenation by a shuffle product, i.e.
            $$ \bell \bphi \dominated \abs{\bell} \abs{\bphi} \dominated \abs{\bell} \shuprod \abs{\bphi} \dominated (C_{\bell} C_{\bphi}) \shuexp{(C_{\bell} + C_{\bphi}) \bm{b}} ,$$
            where $\abs{\bell} := \sum_{\word{v} \in V} \abs{\bell^\word{v}}\wv$,
            \item[\ref{prop:Aexp_closed_resolvent}] We can first assume $\bell^\emptyword = 0$, as $\res{\bell} = \frac{1}{1 - \bell^{\emptyword}} \res{\frac{1}{1 - \bell^{\emptyword}} \bra{\bell - \bell^{\emptyword} \emptyword}}$, and $C_{\bell}, C_{\bphi} > 1$ without loss of generality. It implies $\bell \dominated C_{\bell} (\shuexp{C_{\bell} \bm{b}} - \emptyword) = (C_{\bell})^2 \bm{b} \shuexp{C_{\bell} \bm{b}}$. Moreover, by Lemma~\ref{lem:resolvent_order_preserving}, 
            $$ \res{\bell} \dominated \res{(C_{\bell})^2 \bm{b} \shuexp{C_{\bell} \bm{b}}}. $$
            There thus only remains to make sure that, for any $\alpha, \beta \in \R$, $\res{\alpha \bm{b} \shuexp{\beta \bm{b}}} \in \Aexp$. For this purpose, recalling that $\res{\beta \bm{b}} (\emptyword - \beta \bm{b}) = \emptyword$, we have that
            \begin{align}
                \emptyword - (\alpha + \beta) \bm{b} &
                = \emptyword - \alpha \bm{b} \res{\beta \bm{b}} \bra{\emptyword - \beta \bm{b}} - \beta \bm{b}
                \\ &
                = \bra{\emptyword - \alpha \bm{b} \res{\beta \bm{b}}} \bra{\emptyword - \beta \bm{b}},
            \end{align}
            which directly implies from Proposition~\ref{prop:resolvent-shuexp} that
            \begin{align}
                \inverse{\emptyword - \alpha \bm{b} \shuexp{\beta \bm{b}}} &
                = \bra{\emptyword - \beta \bm{b}} \inverse{\emptyword - (\alpha + \beta) \bm{b}} = \bra{\emptyword - \beta \bm{b}} \shuexp{(\alpha + \beta) \bm{b}}.
            \end{align}
            Finally applying \ref{prop:Aexp_closed_concat}, we clearly see that the \textit{rhs} is dominated, ending the proof.
        \end{itemize}
    \end{proof}

    \begin{corollary} \label{coro:pq_Aexp_implies_ell_Aexp}
        Let $\bp, \bq \in \Aexp$, $\bq^{\emptyword}\neq 1$, then the unique solution $\bell \in \eTA$ to the linear equation
        $$ \bell = \bp + \bell \bq $$
        belongs to $\Aexp$. Moreover, it also belongs to $\I(\widehat{W})$.
    \end{corollary}

    \begin{proof}
        First, applying Proposition~\ref{prop:Aexp_closed}\,\ref{prop:Aexp_closed_concat}-\ref{prop:Aexp_closed_resolvent}, we obtain $\bell = \bp \res{\bq} \in \Aexp$. Then, using Proposition~\ref{prop:Ah}\,\ref{lem:to_verify_stochastic_fubini}, together with Proposition~\ref{prop:Aexp_closed}\,\ref{prop:Aexp_closed_proj}, it follows that $\Aexp \subset \I(\widehat{W})$.
    \end{proof}
    
    Corollary~\ref{coro:pq_Aexp_implies_ell_Aexp} essentially implies that, provided $\bp$ and $\bq$ are exponentially dominated, an easily verifiable condition in practice, 
    $$ X_t := \bracketsig{\bp \res{\bq}} $$
    is well-defined and is an Itô process as in~\eqref{eq:ItohatW}.  
    }

\section{Signature representations of path-dependent processes} \label{sec:representations}

    In this section, we provide our main signature representation formulae for linear path-dependent equations. We consider three categories:
    \begin{itemize}
        \item Linear Volterra equations: exact, time-independent representations for smooth enough kernels, and an approximation result for more general kernels, in Section~\ref{sec:volterra},
        
        \item Certain linear delay equations: exact, time-independent representations for a weighted sum of exponential kernels, in Section~\ref{sec:delayed},
        
        \item Gaussian Volterra processes: exact, time-dependent representations for possibly singular kernels, including the Riemann-Liouville fractional Brownian motion for a Hurst index $H \in (0, 1)$, in Section~\ref{subsec:gaussianvolterra}
    \end{itemize}
    
    In the sequel, we will consider the $2$-dimensional semimartingale $\widehat{W}_t = (t, W_t)$, where $W$ is a one-dimensional Brownian motion, we will discuss extensions for a $d$-dimensional Brownian motion in Remarks~\ref{rmk:generalize_to_multi_d} and  \ref{rmk:generalize_to_multi_d2}.

\subsection{Linear Volterra equations} \label{sec:volterra}
    
    The first class we consider is linear Volterra equations of the form
    \begin{equation} \label{eq:volterra}
        Y_t = y + \int_0^t K_1(t-s) \left( a_1 + b_1 Y_s \right) \d s + \int_0^t K_2(t-s) \left( a_2 + b_2 Y_s \right) \d W_s,
    \end{equation}
    for some real coefficients $y, a_1, b_1, a_2, b_2$ and locally square-integrable kernels $K_1, K_2$. Since the drift and volatility coefficients are linear in $Y$ and hence Lipschitz continuous, the stochastic integral equation \eqref{eq:volterra} admits a unique strong solution $Y$, see for instance \cite[Theorem 3.3]{jaber2019affine}. \\
    
    In our next theorem, we provide an explicit infinite linear representation for the solution $Y_t = \bracketsig{\bell}$ in terms of the signature elements of $\widehat{W}$ and time-independent coefficients $\bell$. For this, we will need the following structure on the kernels $K_1$ and $K_2$:
    \begin{align} \label{eq:assKlinear}
        K_1(u) = \int_{[0, \infty)} e^{-xu} \mu_1(\d x)
        \quad \text{ and } \quad
        K_2(u) = \int_{[0, \infty)} e^{-xu} \mu_2(\d x),
    \end{align}
    for finite measures $\mu_1$ and $\mu_2$ and such that 
    \begin{equation} \label{eq:measure_condition}
        \int_{[0, \infty)} x^n \mu_1(\d x) + \int_{[0, \infty)} x^n \mu_2(\d x) < M^n, \quad n \in \N,
    \end{equation}
    for some constant $M > 0$. \\
    
    In particular, this implies that $K_1, K_2$ are infinitely continuously differentiable on $[0, T]$. For such smooth coefficients $K_1, K_2$ the solution $Y$ is even a semimartingale. \\
    
    \begin{sqexample}\label{ex:sumofexp}
        Let us consider $\mu(\d x) = f(x) \d x$ for a positive bounded measurable function $f$ with compact support. It is easy to see that for such $\mu$, the assumptions \eqref{eq:assKlinear}-\eqref{eq:measure_condition}  are satisfied. Another example is the case $\mu(\d x) = \sum_{k=1}^n w_k^n \delta_{x_k^n} (\d x)$ and $K(u) = \sum_{k=1}^n w_k^n e^{-x_k^n u}$ for $w_k^n, x_k^n \in \R$.
    \end{sqexample}

    Before we state the theorem, we need to clarify how one can make sense of $\int_{[0, \infty)} \bell(x) \mu(\d x)$ when $\bell: x \mapsto \bell(x)$ is a function taking values in $\eTA$.

    \begin{definition}[Integral of parameterized linear functionals] \label{def:integral_sig}

        Fix a Borel measure $\mu$ on $[0, \infty)$. We define $L^1(\eTA[d], \mu)$ to be the set of all the weakly integrable maps from $\bra{[0, \infty),\mu}$ to the locally convex topological vector space $\eTA$, i.e.
        $$ L^1(\eTA[d], \mu) := \set{\bell: [0, \infty)\longrightarrow \eTA \text{ s.t. } \int_{[0, \infty)} \abs{\bell^\wv(x)} \mu(\d x) < \infty, \text{ for all } \wv \in V }. $$
        
        Furthermore, we also define 
        $$ L^1(\A, \mu) := \set{\bell \in L^1(\eTA[d], \mu):\int_{[0, \infty)} \bell(x) \mu(\d x) \in \A}. $$
        
    \end{definition}
    
    \begin{sqexample}
        Take $\bell(x) = \shuexp{-x \word{1}}$ and $\mu(\d x) = e^{-\frac{x^2}{2}} \d x$, then $\bell \in L^1(\eTA, \mu)$ and thus $\int_{[0, \infty)} \bell(x) \mu(\d x) $ is a well-defined element of $\eTA$. Furthermore we can check that $\int_{[0, \infty)} \bell(x) \mu(\d x) \in \A$, showing that $\bell \in L^1(\A, \mu)$. Now fixing $\bell(x) = \shuexp{-x \word{1}}$, for a general measure $\mu$, we can easily see that $\bell\in L^1(\A, \mu)$ if and only if $\sum_{n=0}^{\infty} \int_{[0,\infty)} \frac{x^n}{n!} \mu(\d x) < \infty$. This partially explains why the assumption \eqref{eq:measure_condition} arises.
    \end{sqexample}

    \begin{theorem} \label{thm:volterra}
        Fix $K_1, K_2$ satisfying \eqref{eq:assKlinear}-\eqref{eq:measure_condition}. The solution $Y$ to the linear Volterra equation \eqref{eq:volterra} admits the time-independent signature representation
        \begin{align} \label{eq:lvol}
            Y_t = \bracketsig{\lvol} \text{ for every $t \leq T$, a.s., with } \lvol = \pvol \inverse{\emptyword - \qvol}
        \end{align}
        and
        \begin{align}
            \pvol :&
            = a_1 \word{1} \int_{[0, \infty)} \shuexp{-x \word{1}} \mu_1(\d x) + a_2 \bra{\word{2} - \thalf b_2 K_2(0) \word{1}} \int_{[0, \infty)} \shuexp{-x \word{1}} \mu_2(\d x) + y \emptyword, \label{eq:pvol}
            \\ \qvol :&
            = b_1 \word{1} \int_{[0, \infty)} \shuexp{-x \word{1}} \mu_1({\d x}) + b_2 \bra{\word{2} - \thalf b_2 K_2(0) \word{1}} \int_{[0, \infty)} \shuexp{-x \word{1}} \mu_2(\d x), \label{eq:qvol}
        \end{align}
        and are such that $\pvol, \qvol, \lvol \in \Aexp$. In particular, they all belong to $\A$ and $\I(\widehat W)$.
    \end{theorem}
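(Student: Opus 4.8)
\emph{Strategy.} Define $\tilde Y_t := \bracketsig{\lvol}$. Once we know $\lvol \in \Sh$, Proposition~\ref{Prop:Sh} guarantees that $\tilde Y$ is a well-defined continuous process. We will show that $\tilde Y$ solves the Volterra equation~\eqref{eq:volterra}; since that equation has a unique strong solution, this forces $\tilde Y = Y$ up to indistinguishability. The algebraic backbone is immediate: as $\qvol^\emptyword = 0$, Proposition~\ref{prop:resolvent-solution} yields that $\res{\qvol}$ is well defined and that $\lvol = \pvol\res{\qvol}$ is the unique solution of $\lvol = \pvol + \lvol\qvol$; we use this identity and its projections repeatedly.

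\emph{Step 1: the coefficients live in $\Sh$ and $\I(\widehat W)$.} Write $I_j := \int_{[0,\infty)}\shuexp{-x\word1}\mu_j(dx) = \sum_{n\ge0}\bigl(\int_{[0,\infty)}(-x)^n\mu_j(dx)\bigr)\word1\conpow{n}$. By~\eqref{eq:measure_condition} the $n$-th coefficient is bounded in absolute value by $M^n$, so $I_j \dominated \sum_{n\ge0}M^n\word1\conpow{n} = \res{M\word1} = \shuexp{M\word1}$, which lies in $\Sh$ by Lemma~\ref{lem:shuexp_in_Sh}; hence $I_j\in\Sh$. Since $\pvol$ and $\qvol$ are finite concatenations of the letters $\word1,\word2$ with $I_1,I_2$, Proposition~\ref{prop:submultiplicativity_Sh} gives $\pvol,\qvol\in\Sh$; moreover $\qvol \dominated \bm{a}\shuexp{M\word1}$ with $\bm{a} := (|b_1|+\thalf b_2^2|K_2(0)|)\word1+|b_2|\word2$, so Theorem~\ref{thm:ell_in_Sh_if_q_dominated} gives $\res{\qvol}\in\Sh$ and thus $\lvol = \pvol\res{\qvol}\in\Sh$. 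Projecting $\lvol = \pvol + \lvol\qvol$ (using $\qvol^\emptyword=0$) one computes $\lvol\proj1 = \pvol\proj1 + \lvol(\qvol\proj1)$, $\lvol\proj2 = \pvol\proj2 + \lvol(\qvol\proj2) = K_2(0)(a_2\emptyword+b_2\lvol)$ and $\lvol\proj{22} = b_2K_2(0)\,\lvol\proj2$, all in $\Sh\subset\A$; since $[\widehat W^2,\widehat W^2]_t=t$, the integrability requirements in~\eqref{eq:I-hatW} follow from the estimate $\E[\sup_{t\le T}\normA{\cdot}] \le \normh[T]{\cdot}$ of Proposition~\ref{Prop:Sh}. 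Hence $\lvol\in\Sh\cap\I(\widehat W)$.

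\emph{Step 2: lifted-process identities.} For $\bm{z}\in\Sh$ set $Z_t := \bracketsig{\bm{z}}$ and, for $x\ge0$, $\bm{w} := \bm{z}\word1\shuexp{-x\word1}$. By~\eqref{eq:prop_2.3_cor}, $\bm{w} = \bm{z}\word1 - x\bm{w}\word1$, so $\bm{w}\proj1 = \bm{z} - x\bm{w}\in\Sh$ and $\bm{w}\proj2 = \bm{w}\proj{22} = 0$; thus $\bm{w}\in\Sh\cap\I(\widehat W)$ and the Itô formula~\eqref{eq:ItohatW} gives $\bracketsig{\bm{w}} = \int_0^t(Z_s - x\bracketsig[s]{\bm{w}})\,ds$. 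Uniqueness for this linear ODE yields $\bracketsig{\bm{z}\word1\shuexp{-x\word1}} = \int_0^t e^{-x(t-s)}Z_s\,ds$. If moreover $\bm{z}\in\I(\widehat W)$, then with $\bm{w} := \bm{z}\word2\shuexp{-x\word1} = \bm{z}\word2 - x\bm{w}\word1$ one gets $\bm{w}\proj2 = \bm{z}$, $\bm{w}\proj1 = -x\bm{w}$, $\bm{w}\proj{22} = \bm{z}\proj2$, and combining~\eqref{eq:ItohatW} with $d[Z,W]_t = \bracketsig[t]{\bm{z}\proj2}\,dt$ gives $\bracketsig{\bm{z}\word2\shuexp{-x\word1}} = \int_0^t e^{-x(t-s)}Z_s\circ dW_s$. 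We apply both identities with $\bm{z} = a_j\emptyword + b_j\lvol$, which lies in $\Sh\cap\I(\widehat W)$ by Step 1.

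\emph{Step 3: assembling, and the main obstacle.} Insert $K_j(t-s) = \int_{[0,\infty)}e^{-x(t-s)}\mu_j(dx)$ into the right-hand side of~\eqref{eq:volterra} evaluated at $\tilde Y$, use ordinary and stochastic Fubini to pull the $\mu_j$-integral out, convert the inner Itô integral $\int_0^t e^{-x(t-s)}(a_2+b_2\tilde Y_s)\,dW_s$ to a Stratonovich one via $d[\tilde Y,W]_s = \bracketsig[s]{\lvol\proj2}\,ds = K_2(0)(a_2+b_2\tilde Y_s)\,ds$, apply Step 2 to each inner integral, and finally interchange $\int\cdot\,\mu_j(dx)$ with the signature bracket (Definition~\ref{D:integralsig}). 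The right-hand side of~\eqref{eq:volterra} then becomes $\bracketsig{\pvol + \lvol\qvol} = \bracketsig{\lvol} = \tilde Y_t$, since
\[
y\emptyword + (a_1\emptyword+b_1\lvol)\word1 I_1 + (a_2\emptyword+b_2\lvol)(\word2 - \thalf b_2K_2(0)\word1) I_2 = \pvol + \lvol\qvol = \lvol,
\]
the last equality being the algebraic identity. Hence $\tilde Y$ solves~\eqref{eq:volterra}, so $\tilde Y = Y$, and $\pvol,\qvol\in\Sh$, $\lvol\in\Sh\cap\I(\widehat W)$ were shown in Step 1. The delicate part is making rigorous the chain of Fubini interchanges among the driving measures $\mu_1,\mu_2$, the time/Itô integrals, and the \emph{infinite} signature bracket $\bracketsig{\cdot}$: each step relies on the uniform bounds of Section~\ref{S:Sh} — notably $\E|\bracketsig{\wv}| \le h_T(\wv)$ from~\eqref{eq:L1_uniform_bound_of_sig}, submultiplicativity of $\normh[T]{\cdot}$, and $\E[\sup_s\bracketsig[s]{\bell}^2]\le(\normh[T]{\bell})^2$ — together with the observation that~\eqref{eq:measure_condition} forces each $\mu_j$ to be supported on $[0,M]$, so that $\sup_{x\in[0,M]}\normh[T]{\shuexp{-x\word1}}<\infty$; intertwined with this is the careful accounting of the Itô–Stratonovich correction carried by the factor $\word2-\thalf b_2K_2(0)\word1$.
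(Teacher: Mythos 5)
Your proposal is correct, and it reaches the result by a route that genuinely differs from the paper's in two places. First, for the key identities $\bracketsig{\bm{z}\word{1}\shuexp{-x\word{1}}}=\int_0^t e^{-x(t-s)}Z_s\,ds$ and $\bracketsig{\bm{z}\word{2}\shuexp{-x\word{1}}}=\int_0^t e^{-x(t-s)}Z_s\circ dW_s$, you apply the signature It\^o formula to $\bm{w}=\bm{z}\word{i}\shuexp{-x\word{1}}$, use the recursion $\bm{w}=\bm{z}\word{i}-x\bm{w}\word{1}$ coming from \eqref{eq:prop_2.3_cor} to read off the projections, and invoke uniqueness of the resulting linear (S)DE; the paper obtains the same identities from the conjugation Lemma~\ref{lem:exponential-identity}, i.e.\ $\bell\word{i}\shuexp{\bm b}=\shuexp{\bm b}\shuprod\bra{\bra{\shuexp{-\bm b}\shuprod\bell}\word{i}}$, together with the extended shuffle property. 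Second, you verify the \emph{integral} form \eqref{eq:volterra} directly, substituting $\tilde Y$ into its right-hand side and showing that the algebra collapses to $\pvol+\lvol\qvol=\lvol$, whereas the paper differentiates \eqref{eq:volterra} (using the smoothness of $K_1,K_2$) into \eqref{eq:barYequation} and matches the drift and diffusion coefficients produced by Theorem~\ref{thm:sig_ito_formula} (Lemmas~\ref{L:volterra-in-I} and~\ref{L:lvolbrackets}). Your route avoids justifying the differentiation of the Volterra equation and makes the origin of the correction factor $\word{2}-\thalf b_2K_2(0)\word{1}$ transparent; the paper's route stays entirely at the level of semimartingale decompositions, which is what its It\^o machinery is designed for. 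Step 1 of your argument is essentially identical to the paper's. The one place where you defer real work is the chain of Fubini interchanges in Step 3 (pulling $\int\cdot\,\mu_j(dx)$ through the time and It\^o integrals and through the infinite bracket): this is exactly where the paper spends most of its effort (Propositions~\ref{prop:combined_commute} and~\ref{prop:prop6.5}, the latter via the stochastic Fubini criterion of Veraar), and the estimates you cite --- $\E\abs{\bracketsig{\wv}}\le h_T(\wv)$, submultiplicativity of $\normh[T]{\cdot}$, the second-moment bound of Proposition~\ref{Prop:Sh}, and the (correct) observation that \eqref{eq:measure_condition} forces $\mu_j$ to be supported on $[0,M]$ --- are precisely what is needed to carry them out, so this is a matter of writing rather than a gap.
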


    \begin{proof}
        The proof is given in Section \ref{app:volterra}.
    \end{proof}
    
    For $b_2 = 0$ and $K_1 = K_2 \equiv 1$, one recovers from the representation in Theorem~\ref{thm:volterra} a time-independent representation for the Ornstein-Uhlenbeck process as shown in the following example. For a time-dependent representation of the same process, we refer to Example~\ref{ex:OU} below. 

    \begin{sqexample}[Ornstein-Uhlenbeck]\label{ex:OUtimeindep}
        Set $b_2 = 0$, and $\mu_1(\d x) = \mu_2(\d x) = \delta_0(\d x)$, where $\delta_0$ is the Dirac mass at $0$. In this case, $K_1(u) = K_2(u) = 1$ and equation \eqref{eq:volterra} reads
        $$ \d Y_t = (a_1 + b_1 Y_t) \d t + a_2 \d W_t, \quad Y_0 = y \in \R. $$
        meaning that $Y$ is an  Ornstein-Uhlenbeck process. Moreover, in this case, $\int_{[0, \infty)} \shuexp{-x \word{1}} \mu_1(\d x) = \\ \int_{[0, \infty)} \shuexp{-x \word{1}} \mu_2(\d x) = \emptyword$. So that 
        $$ \qvol = b_1 \word{1}, \quad \pvol = y \emptyword + a_1 \word{1} + a_2 \word{2}. $$
        An application of Theorem~\ref{thm:volterra} yields that the Ornstein-Uhlenbeck process $Y$ can be written as 
        $$ Y_t = \bracketsig{\bell^{\textnormal{OU}}} $$
        with    
        \begin{align} \label{eq:repOU1}
           \bell^{\textnormal{OU}} &
            = \bra{y \emptyword + a_1 \word{1} + a_2 \word{2}} \inverse{\emptyword - b_1 \word{1}}
            = \bra{y \emptyword + a_1 \word{1} + a_2 \word{2}} \shuexp{b_1 \word{1}},
        \end{align}
        where the second equality follows from  Proposition~\ref{prop:resolvent-shuexp}.
      \end{sqexample}

    Furthermore, for $a_1 = a_2 = 0$ and $K_1 = K_2 \equiv 1$, one recovers from the representation in Theorem~\ref{thm:volterra} the explicit solution of the geometric Brownian motion:
    
    \begin{sqexample}[Geometric Brownian motion]
        Take $a_1 = a_2 = 0$, and $\mu_1(\d x) = \mu_2(\d x) = \delta_0(\d x)$, where $\delta_0$ is the Dirac mass at $0$. In this case, $K_1(u) = K_2(u) = 1$ and $\int_{[0, \infty)} \shuexp{-x \word{1}} \mu_1(\d x) = \int_{[0, \infty)} \shuexp{-x \word{1}} \mu_2(\d x) = \emptyword$. \\
        
        Therefore $\qvol = \bra{b_1 - \thalf b_2^2} \word{1} + b_2 \word{2}$, $\pvol = y \emptyword$ and thus, using Proposition~\ref{prop:resolvent-shuexp},

        \begin{align}
            \lvol &
            = y \inverse{\emptyword - \bra{b_1 - \thalf b_2^2} \word{1} - b_2 \word{2}}
            = y \shuexp{\bra{b_1 - \half b_2^2} \word{1} + b_2 \word{2}}.
        \end{align}
        
        It follows from the shuffle property \ref{prop:shuffle_property} and the definition of the shuffle exponential in \eqref{nota:shuexp}, that
        \begin{align}
            Y_t &
            = y \bracketsig{\shuexp{\bra{b_1 - \half b_2^2} \word{1} + b_2 \word{2}}}
            = y e^{\bracketsig{\bra{b_1 - \thalf b_2^2} \word{1} + b_2 \word{2}}}
            = y e^{\bra{b_1 - \thalf b_2^2} t + b_2 W_t},
        \end{align}
        
        which is the geometric Brownian motion, solution to 
        $$ \frac{\d Y_t}{Y_t} = b_1 \d t + b_2 \d W_t, \quad Y_0 = y \in \R. $$
    \end{sqexample}

    Upon initial inspection, our assumptions regarding $K_1$ and $K_2$ may appear limiting. However, the stability results of stochastic Volterra equations allow us to approximate solutions for broader classes of kernels. For example, we can approximate solutions for the singular fractional kernel $K(t) = t^{H-1/2}$ with $H \in (0, 1/2)$ using linear combinations of the signature elements, as demonstrated in the following corollary.

    \begin{corollary} \label{corr:convergence-K^n}
        Let $K_1, K_2$ be locally square-integrable kernels and denote by $Y$ the solution of \eqref{eq:volterra}. For $n \in \N$ and $i=1,2$, let $\mu_i^n (\d x) = \sum_{k=1}^n w_k^{n,i} \delta_{x_k^{n,i}}(\d x)$ for some $w_k^{n,i}, x_k^{n,i} \in \R$, and set $K_1^n$ and $K_2^n$ as in \eqref{eq:assKlinear} and $Y_t^n = \bracketsig{\lvol_n}$ as in \eqref{eq:lvol} both with $\mu_1^n$ and $\mu_2^n$ instead of $\mu_1$ and $\mu_2$. Assume that 
        \begin{align} \label{eq:convekrnel}
            \int_0^T \abs{K_1^n(s) - K_1(s)}^2 \d s + \int_0^T \abs{K_2^n(s) - K_2(s)}^2 \d s \to 0, \quad \text{ as } n \to \infty.
        \end{align}
        
        Then, 
        \begin{align}
           \sup_{t \in [0, T]} \E \sqbra{\abs{Y_t - \bracketsig{\lvol_n}}^p} \to 0, \quad \text{ as } n \to \infty, \quad p \in \N.
        \end{align}
    \end{corollary}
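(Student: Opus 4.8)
The plan is to establish the convergence in two stages. First, I would invoke a stability estimate for linear stochastic Volterra equations: since $Y$ and $Y^n$ solve \eqref{eq:volterra} with kernels $(K_1,K_2)$ and $(K_1^n,K_2^n)$ respectively and identical coefficients $a_1,b_1,a_2,b_2,y$, a standard Gr\"onwall-type argument on the $L^p$-norm (using the Lipschitz property of the linear coefficients, the Burkholder–Davis–Gundy inequality for the stochastic term, and the $L^2$-convergence hypothesis \eqref{eq:convekrnel} of the kernels) yields
\begin{align*}
    \sup_{t \in [0, T]} \E\sqbra{\abs{Y_t - Y_t^n}^p} \to 0, \quad n \to \infty,
\end{align*}
for every $p \in \N$. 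This is the content of results such as \cite[Theorem 3.3 and its stability corollaries]{jaber2019affine}; the key point is that the linear structure makes the coefficients globally Lipschitz with a constant independent of $n$, and the discrepancy between the two equations is controlled purely by $\int_0^T |K_i^n - K_i|^2 ds$.

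Second, I would identify $Y^n_t$ with $\bracketsig{\lvol_n}$: this is exactly the conclusion of Theorem~\ref{thm:volterra} applied to the kernels $K_1^n, K_2^n$, which are finite sums of exponentials and hence satisfy \eqref{eq:assKlinear}–\eqref{eq:measure_condition} (indeed $\mu_i^n$ is a finite sum of Dirac masses, so $\int x^m \mu_i^n(dx)$ is a fixed finite number for each $m$, and one can always take $M$ large enough — here one should note that $M$ may depend on $n$, but that is harmless since Theorem~\ref{thm:volterra} is applied for each fixed $n$). Thus $Y^n_t = \bracketsig{\lvol_n}$ almost surely for every $t \le T$, and substituting this identity into the stability estimate of the first step gives
\begin{align*}
    \sup_{t \in [0, T]} \E\sqbra{\abs{Y_t - \bracketsig{\lvol_n}}^p} \to 0, \quad n \to \infty,
\end{align*}
which is the claim.

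The main obstacle is the first step, namely making the $L^p$-stability estimate for Volterra equations rigorous and uniform in $n$. Unlike the SDE case, the Volterra convolution structure means one cannot directly apply Gr\"onwall; instead one needs a stochastic Volterra–Gr\"onwall lemma (or an iteration/fixed-point argument on $[0,T]$ with the resolvent of $|K_i|^2$) together with the moment bounds $\sup_n \sup_{t\le T}\E|Y^n_t|^p < \infty$, which themselves follow from \eqref{eq:convekrnel} (the $K_i^n$ are bounded in $L^2([0,T])$) and the linear growth of the coefficients. A clean way to organize this is to write $Y_t - Y_t^n$ as a sum of a deterministic-kernel-difference term (driven by $(K_i - K_i^n)(a_i + b_i Y_s)$, controlled directly by \eqref{eq:convekrnel} and the moment bounds on $Y$) and a term of the form $\int_0^t K_i^n(t-s) b_i (Y_s - Y_s^n)\,(\cdot)$, to which the Volterra–Gr\"onwall argument applies with a constant depending only on $\sup_n \|K_i^n\|_{L^2([0,T])}$, hence uniform in $n$. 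Everything else — the identification with the signature representation, and the final substitution — is immediate from Theorem~\ref{thm:volterra}.
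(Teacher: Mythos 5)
Your proposal is correct and follows essentially the same route as the paper: identify $Y^n_t=\bracketsig{\lvol_n}$ via Theorem~\ref{thm:volterra} (the Dirac-mass measures fall under Example~\ref{ex:sumofexp}), then control $\sup_{t\le T}\E|Y_t-Y^n_t|^p$ by the $L^2$-distance of the kernels through Jensen, BDG and a Gr\"onwall convolution inequality, exactly as in the paper's citation of \cite[Lemma 2.4]{jaber2019affine} and \cite[Lemma 9.8.2]{gripenberg1990volterra}. Your extra remark that the constant $M$ in \eqref{eq:measure_condition} may depend on $n$ but is harmless (since Theorem~\ref{thm:volterra} is invoked for each fixed $n$) is a correct and worthwhile clarification that the paper leaves implicit.
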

    
    \begin{proof}
        For $n \in \N$, it follows from Example~\ref{ex:sumofexp} that $\mu_1^n$ and $\mu_2^n$ satisfy the assumptions \eqref{eq:measure_condition}. Hence, an application of Theorem~\ref{thm:volterra} yields that $Y^n_t = \bracketsig{\lvol_n}$ solves the stochastic Volterra equation 
        \begin{equation} \label{eq:volterra_n}
            Y^n_t = y + \int_0^t K^n_1(t-s) \bra{a_1 + b_1 Y_s} \d s + \int_0^t K^n_2(t-s) \bra{a_2 + b_2 Y_s} \d W_s.
        \end{equation}
        
        Routine applications of Jensen's and Grownall's convolution inequalities, see for instance the proof of \cite[Lemma 2.4]{jaber2019affine} and \cite[Lemma 9.8.2]{gripenberg1990volterra}, yield that 
        \begin{align}
            \sup_{t \in [0, T]} \E \sqbra{\abs{Y_t -  Y^n_t}^p} \leq C_p \bra{\bra{\int_0^T \abs{K_1^n(s) - K_1(s)}^2 \d s}^\frac{p}{2} + \bra{\int_0^T \abs{K_2^n(s) - K_2(s)}^2 \d s}^\frac{p}{2}}, \quad p \in \N,
        \end{align}
        for some $C_p >0$, which ends the proof. 
    \end{proof}

    \begin{sqremark}\label{rmk:approx}
        Different choices exist in the literature on Volterra processes for the parameters $w_i^k, x_i^k$ to ensure the convergence of the weighted sum of exponentials towards the initial kernels in \eqref{eq:convekrnel} whenever $K_1$ and $K_2$ are completely monotone functions on $(0, \infty)$, i.e.~infinitely differentiable on $(0, \infty)$ such that $(-1)^n K^{(n)} \geq 0$. For instance,  for a Hurst index $H \in (0, 1/2)$, the singular fractional kernel $t^{H - 1/2}$ is completely monotone and admits the following representation (following Berstein's theorem) as a Laplace transform: 
        \begin{equation} \label{laplace}
            \frac{t^{H-\half}}{\Gamma(H+\half)} = \int_{[0, \infty)} e^{-xt} \mu(\d x)
            \quad \text{ and } \quad
            \mu(dx) = \frac{x^{-H-\half}}{\Gamma(H+\half) \Gamma(\half - H)} \d x.
        \end{equation}
        For $n \geq 1$ and $r_n > 1$, such that 
        \begin{align} \label{eq:rn_cond}
            r_n \downarrow 1
            \quad \text{ and } \quad
            n \ln r_n \to \infty,
            \quad \text{ as }
            n \to \infty,
        \end{align} 
        with the following parametrization for the weights and the mean reversions
        \begin{align} \label{eq:ci_and_xi}
            w_k^n = \frac{(r_n^{1-\alpha}-1)r_n^{(\alpha-1)(1+\frac{n}{2})}}{\Gamma(\alpha) \Gamma(2-\alpha)} r_n^{(1-\alpha)k}
            \quad \text{ and } \quad
            x_k^n = \frac{1-\alpha}{2-\alpha} \frac{r_n^{2-\alpha}-1}{r_n^{1-\alpha}-1} r_n^{k-1-\frac{n}{2}},
            \quad \text{ for }
            k = 1, \ldots, n,
        \end{align}
        where $\alpha := H+1/2$ for some $H \in (0, 1/2)$, one can obtain the convergence of $\sum_{k=1}^n c_k^n e^{-x_k^n t}$ towards the fractional kernel in the sense of \eqref{eq:convekrnel}, see \cite{abi2019lifting,jaber2018multifactor} for more details.
    \end{sqremark}

    \begin{sqremark} \label{rmk:generalize_to_multi_d}
        Theorem \ref{thm:volterra} can be readily extended to the case where the Volterra equation is driven by a $(d-1)$-dimensional Brownian motion $W$ of the form
        \begin{equation}
            Y_t = y + \int_0^t K_\word{1}(t-s) \bra{a_\word{1} + b_\word{1} Y_s} \d s + \sum_{\word{j} \in \set{\word{2}, \dots, \word{d}}} \int_0^t K_\word{j}(t-s) \bra{a_\word{j} + b_\word{j} Y_s} \d W_s^\word{j}.
        \end{equation}
        Here each $K_\word{i}$ is of the form $K_\word{i}(u) = \int_{[0, \infty)} e^{-xu} \mu_\word{i}(\d x)$ with a finite measure $\mu_\word{i}$ satisfying \eqref{eq:measure_condition} with the same constant $M$, for $\word{i} \in \set{\word{1}, \dots, \word{d}}$. Then, similar to Theorem~\ref{thm:volterra}, we obtain the representation
        \begin{align} \label{eq:lvol_multi_d}
            Y_t = \bracketsig{\lvol}, \quad \lvol = \pvol \inverse{\emptyword - \qvol},
        \end{align}
        while $\pvol$ and $\qvol$ are updated to:
        \begin{align}
            \pvol :&
            = \sum_{\word{i} \in \set{\word{1}, \dots, \word{d}}} a_\word{i} \word{i} \int_{[0, \infty)} \shuexp{-x \word{1}} \mu_\word{i}(\d x) - \half \sum_{\word{j} \in \set{\word{2}, \dots, \word{d}}} a_\word{j} b_\word{j} K_\word{j}(0) \word{1} \int_{[0, \infty)} \shuexp{-x \word{1}} \mu_\word{j}(\d x) + y \emptyword,
            \\ \qvol :&
            = \sum_{\word{i} \in \set{\word{1}, \dots, \word{d}}} b_\word{i} \word{i} \int_{[0, \infty)} \shuexp{-x \word{1}} \mu_\word{i}(\d x) - \half \sum_{\word{j} \in \set{\word{2}, \dots, \word{d}}} b_\word{j}^2 K_\word{j}(0) \word{1} \int_{[0, \infty)} \shuexp{-x \word{1}} \mu_\word{j}(\d x).
        \end{align}
    \end{sqremark}

\subsection{Linear delay equations} \label{sec:delayed}

    A second equation that we consider is a delay equation with linear delays both in the drift and in the volatility parts. To keep notations simple and highlight the main ideas, we restrict to weighted sum of exponential kernels:
    \begin{align} \label{eq:sde-DE-exp}
        d Z_t &
        = \left( a_1 + b_1 Z_t + \int_0^t K_1(t-s) Z_s \d s \right) \d t + \left( a_2 + b_2 Z_t + \int_0^t K_2(t-s) Z_s \d s \right) \d W_t, 
        \\ Z_0 &
        = z \in \R,
    \end{align}
    where 
    $$ K_i(t) = \sum_{k=1}^{n_i} w_i^k e^{x_i^k t}, $$
    with $a_i, b_i, w_i^k, x_i^k \in \R, n_i \in \N$ for $i = 1, 2$. Again, it is straightforward to obtain the existence and uniqueness of a strong solution $Z$ since the coefficients are linear. We stress that $Z$ cannot be written as a Volterra equation in the sense of \eqref{eq:volterra} unless $K_2$ is zero.
    
    \begin{theorem} \label{thm:delayed}
        The solution $Z$ to the linear delay equation \eqref{eq:sde-DE-exp} admits the time-independent signature representation
        \begin{align} \label{eq:lde}
            Z_t = \bracketsig{\lde}
            \text{ for every $t \leq T$, a.s., with }
            \lde = \pde \inverse{\emptyword - \qde},
        \end{align}
        and
        \begin{align}
            \pde &
            := \bra{a_1 - \thalf b_2 a_2} \word{1} + a_2 \word{2} + z \emptyword, \label{eq:pde}
            \\ \qde &
            := \bra{b_1 - \thalf b_2^2} \word{1} + b_2 \word{2} + \bra{\sum_{k=1}^{n_1} w_1^k \word{1} \shuexp{x_1^k \word{1}} - \half b_2 \sum_{k=1}^{n_2} w_2^k \word{1} \shuexp{x_2^k \word{1}}} \word{1} + \bra{\sum_{k=1}^{n_2} w_2^k \word{1} \shuexp{x_2^k \word{1}}} \word{2}, \label{eq:qde}
        \end{align}
        and are such that $\pde, \qde, \lde \in \Aexp$. In particular, they all belong to $\A$ and $\I(\widehat W)$.
    \end{theorem}
    
    \begin{proof}
        The proof is given in Section \ref{app:delayed}.
    \end{proof}

    \begin{sqremark}
        For more general kernels $K_1, K_2$ a similar stability argument as in Corollary~\ref{corr:convergence-K^n} allows to approximate the solution to the delay equation by linear combination of the signatures.
    \end{sqremark}
    
    \begin{sqremark} \label{rmk:generalize_to_multi_d2}
        Similar to Remark \ref{rmk:generalize_to_multi_d}, Theorem~\ref{thm:delayed} can also be extended to the case where the Delayed equation is driven by a ($d-1$)-dimensional Brownian motion $W$ of the form:
        \begin{align} 
            d Z_t &
            = \left( a_\word{1} + b_\word{1} Z_t + \int_0^t K_\word{1}(t-s) Z_s \d s \right) \d t + \sum_{\word{j} \in \set{\word{2}, \dots, \word{d}}} \left( a_\word{j} + b_\word{j} Z_t + \int_0^t K_\word{j}(t-s) Z_s \d s \right) \d W_t^\word{j}, 
            \\ Z_0 &
            = z \in \R,
        \end{align}
        where 
        $$ K_{\word{i}}(t) = \sum_{k=1}^{n_\word{i}} w_\word{i}^k e^{x_\word{i}^k t}. $$
        Then similar to Theorem \ref{thm:delayed}, we have the representation:
        \begin{align}
            Z_t = \bracketsig{\lde},
            \quad
            \lde = \pde \inverse{\emptyword - \qde},
        \end{align}
        while $\pde$ and $\qde$ are updated to:
        \begin{align}
            \pde &
            = \sum_{\word{i} \in \set{\word{1}, \dots, \word{d}}} a_\word{i} \word{i} - \half \sum_{\word{j} \in \set{\word{2}, \dots, \word{d}}} a_\word{j} b_\word{j} \word{1} + z \emptyword, 
            \\ \qde &
            = \sum_{\word{i} \in \set{\word{1}, \dots, \word{d}}} b_\word{i} \word{i} - 
            \half \sum_{\word{j} \in \set{\word{2}, \dots, \word{d}}} b_\word{j}^2 \word{1} + 
            \sum_{\word{i} \in \set{\word{1}, \dots, \word{d}}} \sum_{k=1}^{n_\word{i}} w_\word{i}^k \word{1} \shuexp{x_\word{i}^k \word{1}} \word{i} 
            - \half \sum_{\word{j} \in \set{\word{2}, \dots, \word{d}}} b_\word{j} \sum_{k=1}^{n_\word{j}} w_\word{j}^k \word{1} \shuexp{x_\word{j}^k \word{1}} \word{1}.
        \end{align}
    \end{sqremark}

\subsection{Gaussian Volterra processes} \label{subsec:gaussianvolterra}

    The final class that we consider are continuous Gaussian Volterra processes of the form 
    \begin{align} \label{eq:gaussianvolterra}
        X_t = \int_0^t K(t-s) \d W_s,
    \end{align}
    for a locally square-integrable deterministic kernel $K$ that is infinitely differentiable on $(0, T]$ and satisfies
    \begin{align}
        \begin{cases} \label{eq:assKgaussian2}
            & \sum_{n=0}^\infty \abs{K^{(n)}(t)} \frac{t^{n + \half}}{n! \sqrt{2n + 1}} < \infty
            \quad \text{ and } \quad
            \int_0^t \left( \sum_{n=0}^\infty \abs{K^{(n)}(t)} \frac{s^n}{n!} \right)^2 \d s < \infty,
            \quad t \in (0, T],\\
            &\int_0^T \big( \sum_{n=0}^\infty \abs{K^{(n)}(t)} \frac{t^{n + \half}}{n! \sqrt{2n + 1}} \big) \d t < \infty,
        \end{cases}
    \end{align}
    where $K^{(n)}$ denotes the $n$-th derivative on $(0, T]$.  \\
    
    We note that here, and in contrast with Theorem~\ref{thm:volterra}, $K$ can have a singularity at $0$, as shown in Example~\ref{ex:RL} for the Riemann-Liouville fractional Brownian motion. For such kernels, Corollary~\ref{corr:convergence-K^n} already provides an approximation result by infinite linear combinations of signature elements of $\widehat{W}$ with time-independent coefficients $\bell$. More interestingly, the following theorem shows that exact infinite linear representation of $X$ are possible but with time-dependent coefficients $\bell_t$. Time-dependent representations seemed crucial for us to obtain exact representations for singular kernels.
    
    \begin{theorem} \label{thm:gaussian}
        Fix $K: (0, T] \to \R$ satisfying \eqref{eq:assKgaussian2}, the Gaussian Volterra process $X$ defined in \eqref{eq:gaussianvolterra} admits a time-dependent signature representation
        $$ X_t = \bracketsig{\lgv}, \quad a.s. $$
        for every $t \leq T$, with $\lgv$ given by 
        \begin{align} \label{eq:lgv}
            \lgv =
            \begin{cases}
                \sum_{n=0}^\infty K^{(n)}(t) (-1)^n \word{1} \conpow{n} \word{2} &
                \text{if } t \in (0, T],
                \\ 0 &
                \text{else.}
            \end{cases}
        \end{align}
    \end{theorem}
    
    \begin{proof}
        The proof is given in Section \ref{app:gaussian}.
    \end{proof}
    \begin{sqremark}
       Unlike the representations in Theorems~\ref{thm:volterra} and \ref{thm:delayed}, here $\bracketsig{\lgv}$ is a modification of $X_t$ and it is not necessarily indistinguishable from the process $X$.
    \end{sqremark}
    The following example shows that the exponential kernel $K(t) = e^{-\kappa t}$ for $\kappa > 0$ satisfies the assumption of Theorem~\ref{thm:gaussian}. This provides a time-dependent representation of the Ornstein-Uhlenbeck process, recall Example~\ref{ex:OUtimeindep} for the time-independent representation. 

    \begin{sqexample}[Ornstein-Uhlenbeck process] \label{ex:OU}
        Let
        $$ X_t = \int_0^t e^{-\kappa (t-s)} \d W_s = \int_0^t K(t-s) \d W_s $$
        with $K(t) = e^{-\kappa t}$ and $\kappa \in \R$. We first check that $K$ satisfies \eqref{eq:assKgaussian2}. Clearly $K$ is infinitely differentiable on $[0, T]$ such that $\abs{K^{(n)}(t)} = \abs{\kappa}^n K(t)$. Therefore,
        \begin{align}
            \int_0^t \left( \sum_{n=0}^\infty \frac{\abs{K^{(n)}(t)} s^n}{n!} \right)^2 \d s 
            = \int_0^t \left( K(t) \sum_{n=0}^\infty \frac{(\abs{\kappa} s)^n}{n!} \right)^2 \d s
            = K(t)^2 \int_0^t e^{2 \abs{\kappa} s} \d s < \infty.
        \end{align}
        Moreover
        \begin{align*}
            \sum_{n=0}^\infty \frac{\abs{K^{(n)}(t)}}{n! \sqrt{2n+1}} t^{n + \half} &= \sqrt{t} e^{-\kappa t} \sum_{n=0}^\infty \frac{(\abs{\kappa} t)^n}{n!} \frac{1}{\sqrt{2n+1}} \leq \sqrt{T} e^{\abs{\kappa} T}\sum_{n=0}^\infty \frac{(\abs{\kappa} t)^n}{n!}= \sqrt{T} e^{2\abs{\kappa}T} < \infty.
        \end{align*}
        Thus
        $$\int_0^T \bra{\sum_{n=0}^\infty \frac{\abs{K^{(n)}(t)}}{n! \sqrt{2n+1}} t^{n + \half}} \d t \leq T^{\frac{3}{2}} e^{2 \abs{\kappa} T} < \infty. $$
        
        Finally, Theorem~\ref{thm:gaussian} yields the signature representation $X_t = \bracketsig{\bell_t^\textnormal{OU}}$, with
        \begin{equation} \label{eq:linear-OU-timedep}
            \bell_t^\textnormal{OU} = e^{-\kappa t} \sum_{n=0}^\infty \kappa^n \word{1} \conpow{n} \word{2}.
        \end{equation}
        Note that we can more compactly write it as
        \begin{align}\label{eq:repOU2}
           \bell_t^\textnormal{OU} = \shuexp{-\kappa (t \emptyword - \word{1})} \word{2}, 
        \end{align}
        which is the algebraic translation of the probabilistic expression 
        $$ X_t = \int_0^t e^{-\kappa (t-s)} \d W_s. $$
    \end{sqexample}

    The following example shows that the fractional kernel $K(t) = t^{H-1/2}$ for $H \in (0, 1)$, with singularity at $0$ for $H<1/2$, satisfies the assumption of Theorem~\ref{thm:gaussian}. This provides a representation of the Riemann-Liouville fractional Brownian motion, and shows that infinite linear combinations of signature elements are not always semimartingales, recall Remark~\ref{rmk:itofrac}.
    
    \begin{sqexample}[Riemann-Liouville fractional Brownian motion] \label{ex:RL}
        Let
        $$ X_t = \frac{1}{\Gamma \left( H + \half \right)} \int_0^t (t-s)^{H - \half} \d W_s = \int_0^t K(t-s) \d W_s, $$
        
        with $K(t) = \frac{t^{H - \half}}{\Gamma(H + \half)}$. We first check that $K$ satisfies \eqref{eq:assKgaussian2}. Clearly $K$ is analytical on $(0, T]$ for $H \in (0, 1)$ :
        \begin{align}
            K^{(n)}(t) &
            = K(t) t^{-n} \left( H - \thalf \right)^{\underline{n}}
            = K(t) (-t)^{-n} \left( \thalf - H \right)^{\bar{n}}
        \end{align}
        where $x^{\underline{n}} = \prod_{k=0}^{n-1} (x-k)$ and $x^{\bar{n}} = \prod_{k=0}^{n-1} (x+k)$ denote the falling and rising factorials respectively. \\
        
        Furthermore, one can then remark that
        \begin{align}
            \abs{K^{(n)}(t)} &
            = \begin{cases}
                K^{(n)}(t) (-1)^{n} &
                \text{if $H < \half$,}
                \\ K^{(n)}(t) (-1)^{n+1} &
                \text{else.}
            \end{cases}
        \end{align}
        
        Therefore
        \begin{align}
            \int_0^t \left( \sum_{n=0}^\infty \frac{\abs{K^{(n)}(t)} s^n}{n!} \right)^2 \d s &
            = \int_0^t \left( K(t) \sum_{n=0}^\infty \left( H - \thalf \right)^{\underline{n}} \frac{1}{n!} \left( \frac{-s}{t} \right)^n \right)^2 \d s
            = \int_0^t K(t-s)^2 \d s < \infty,
        \end{align}
        
        where the last equality comes from the Taylor expansion of $(1 - x)^{H - 1/2}$ which converges for $\abs{x} < 1$. Moreover we have
        \begin{align}
            \abs{K^{(n)}(t)} &
            = K(t) \frac{\Gamma \left( n + \thalf - H \right)}{\Gamma \left( \thalf - H \right)} t^{-n} = O \left( \left( \tfrac{n}{t} \right)^{\half - H} (n-1)! ~t^{-n} \right), \quad n > 0,
        \end{align}
        
        therefore
        $$ \sum_{n=0}^\infty \frac{\abs{K^{(n)}(t)}}{n! \sqrt{2n+1}} t^{n + \half} \leq C t^{H} \sum_{n=1}^\infty n^{-1-H} < \infty. $$
        And $$
         \int_0^T \bra{\sum_{n=0}^\infty \frac{\abs{K^{(n)}(t)}}{n! \sqrt{2n+1}} t^{n + \half} } \d t \leq \frac{C T^{H+1}}{H+1} \sum_{n=1}^\infty n^{-1-H} < \infty. 
        $$
        
        Finally, Theorem~\ref{thm:gaussian} yields the signature representation $X_t = \bracketsig{\bell_t^\textnormal{RL}}$, with
        \begin{align} \label{eq:linear-RL}
            \bell_t^\textnormal{RL} &
            = \frac{t^{H - \half}}{\Gamma(H + \half)} \sum_{n=0}^\infty t^{-n} \left( \thalf - H \right)^{\bar{n}} \word{1} \conpow{n} \word{2},
        \end{align}
    
        where $(\cdot)^{\bar{n}}$ is the rising factorial. 
        Remarking that $\left( \thalf - H \right)^{\bar{n}} = 0$ for all $n > 0$ when $H = \half$ gives $\bell_t = \word{2}$, the coordinate of $W_t$ in $\sig$. Note that we can also formally write it as
        $$ \bell_t^\textnormal{RL} = \frac{1}{\Gamma(H + \half)} (t \emptyword - \word{1}) \shupow{H - \half} \word{2}, $$
        which is the algebraic translation of the probabilistic expression 
        $$ X_t = \frac{1}{\Gamma(H + \half)} \int_0^t (t-s)^{H - \half} \d W_s. $$
    \end{sqexample}

\subsection{Numerical illustrations} \label{subsec:numerics}

    In this subsection, we provide numerical implementations of the truncated form of the signature processes $X^{\leq M}$ defined by
    $$ X_t^{\leq M} := \bracketsigtrunc[M]{\bell}, \quad M \geq 0, $$
    where $\bell \in \eTA[2]$ and $\sig^{\leq M}: [0, T] \to \tTA[2]{M}$ is defined in \eqref{def:sig-trunc}, i.e.~its $M$ first levels coincide with $\sig$ and everything else is set to $0$.

\subsubsection{Shifted Riemann-Liouville fractional Brownian motion}
    
    Figure \ref{fig:ex_traj-RL} and Table~\ref{tab:mse_traj-RL} display simulations of a shifted Riemann-Liouville fractional Brownian motion $X^\varepsilon$ together with mean-squared errors, i.e.
    $$ X_t^\varepsilon = \frac{1}{\Gamma \left( H + \half \right)} \int_0^t (t+\varepsilon-s)^{H-\half} \d W_s, $$
    and its linear representation counterpart $\bell_{\cdot + \varepsilon}^{\textnormal{RL}}$, following from \eqref{eq:linear-RL},  that is
    \begin{align}
        \bell_{t + \varepsilon}^\textnormal{RL} &
        = \frac{(t + \varepsilon)^{H - \half}}{\Gamma \left( H + \half \right)} \sum_{n=0}^\infty (t + \varepsilon)^{-n} \left( \thalf - H \right)^{\bar{n}} \word{1} \conpow{n} \word{2}
        = \frac{1}{\Gamma \left( H + \half \right)} \bra{(t + \varepsilon) \emptyword - \word{1}} \shupow{H - \half} \word{2},
    \end{align}
    for several truncation orders $M \in \N$. The reason for shifting the kernel is to speed up the convergence as it can be quite slow for small values of $H$ when $\varepsilon=0$.
    
    \begin{figure}[H]
        \centering 
        \subfloat[\centering $H=0.1$]{\includegraphics[width=\twoplotswidth]{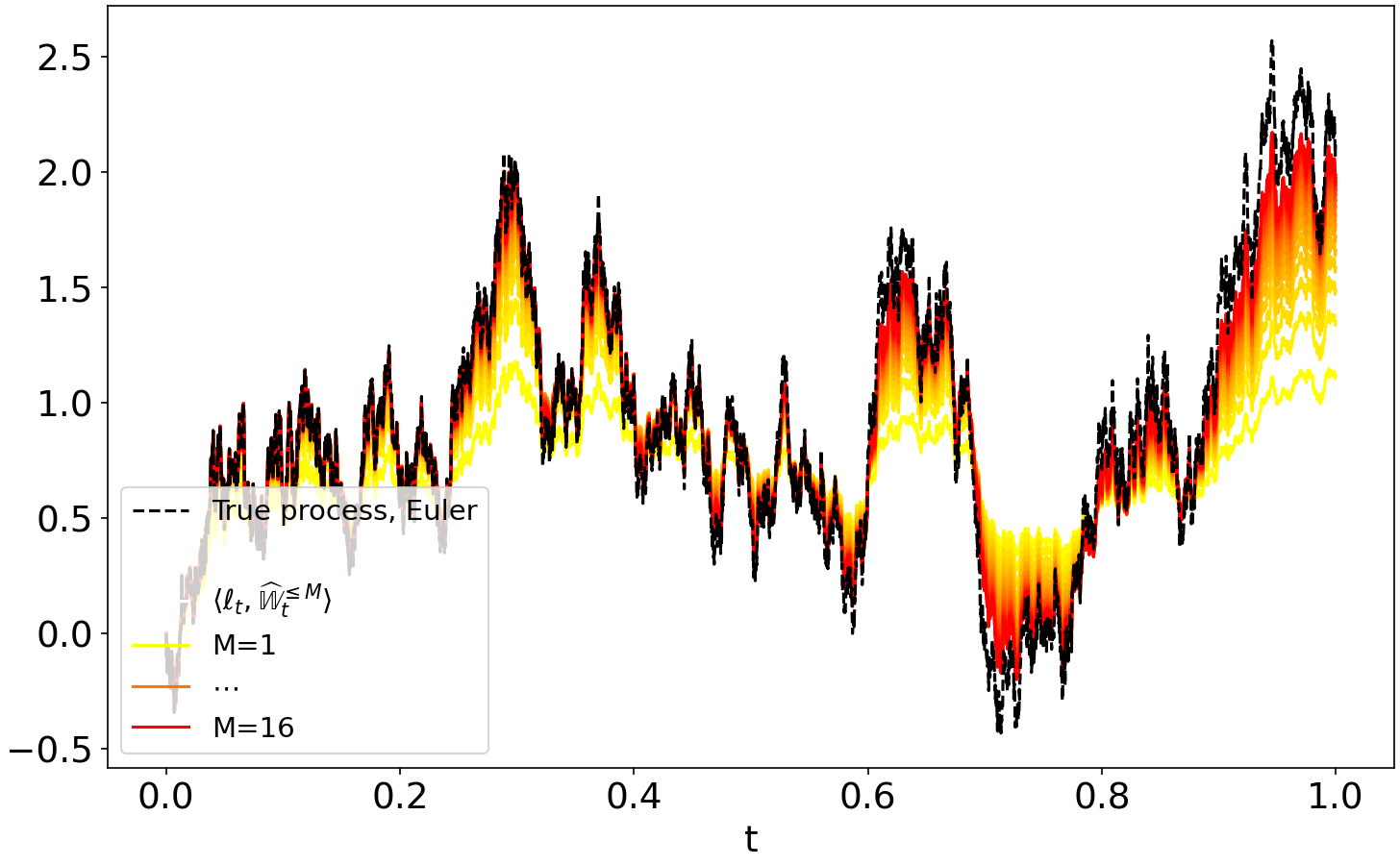}}
        \quad
        \subfloat[\centering $H=0.3$]{\includegraphics[width=\twoplotswidth]{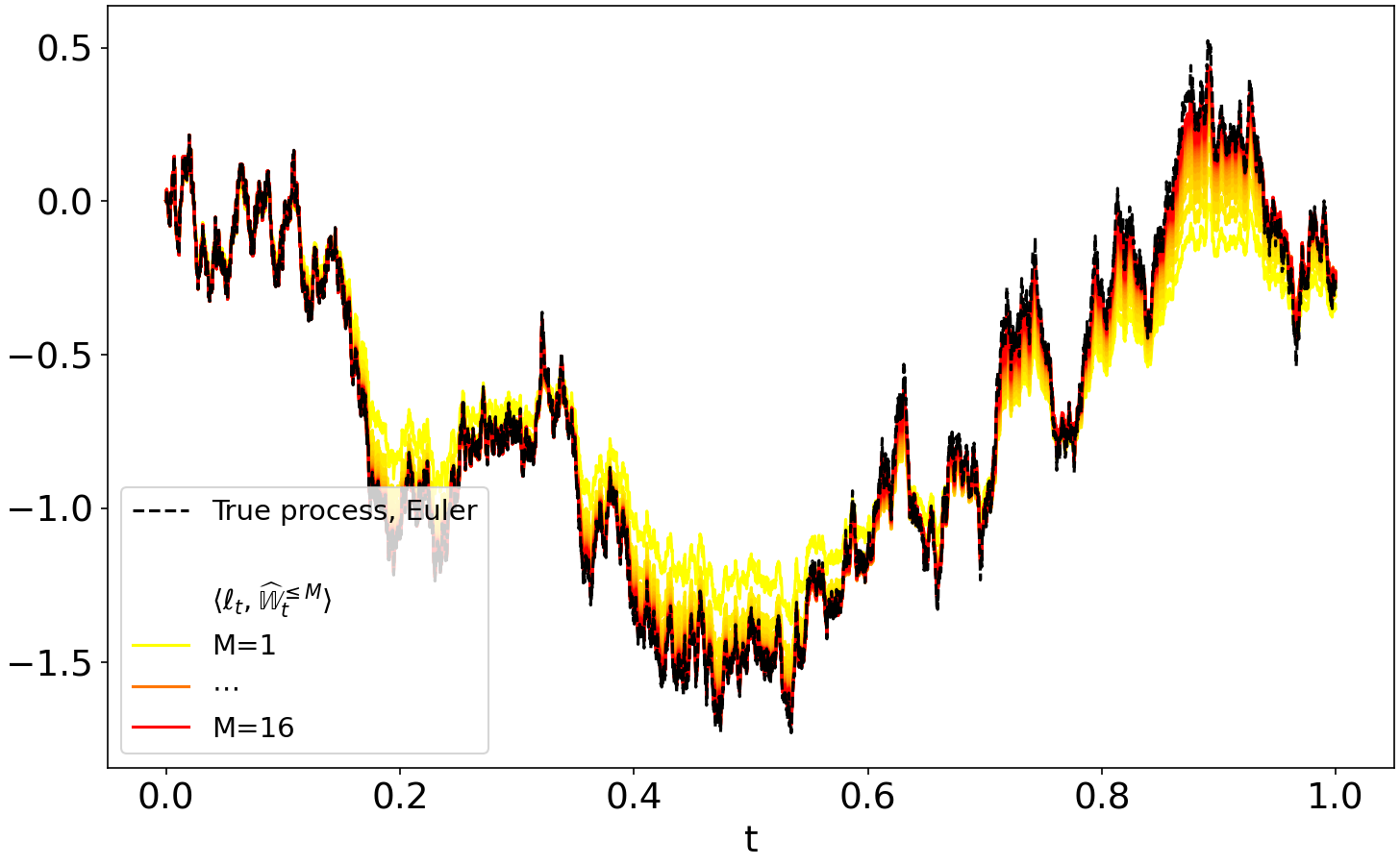}}
        \qquad
        \subfloat[\centering $H=0.7$]{\includegraphics[width=\twoplotswidth]{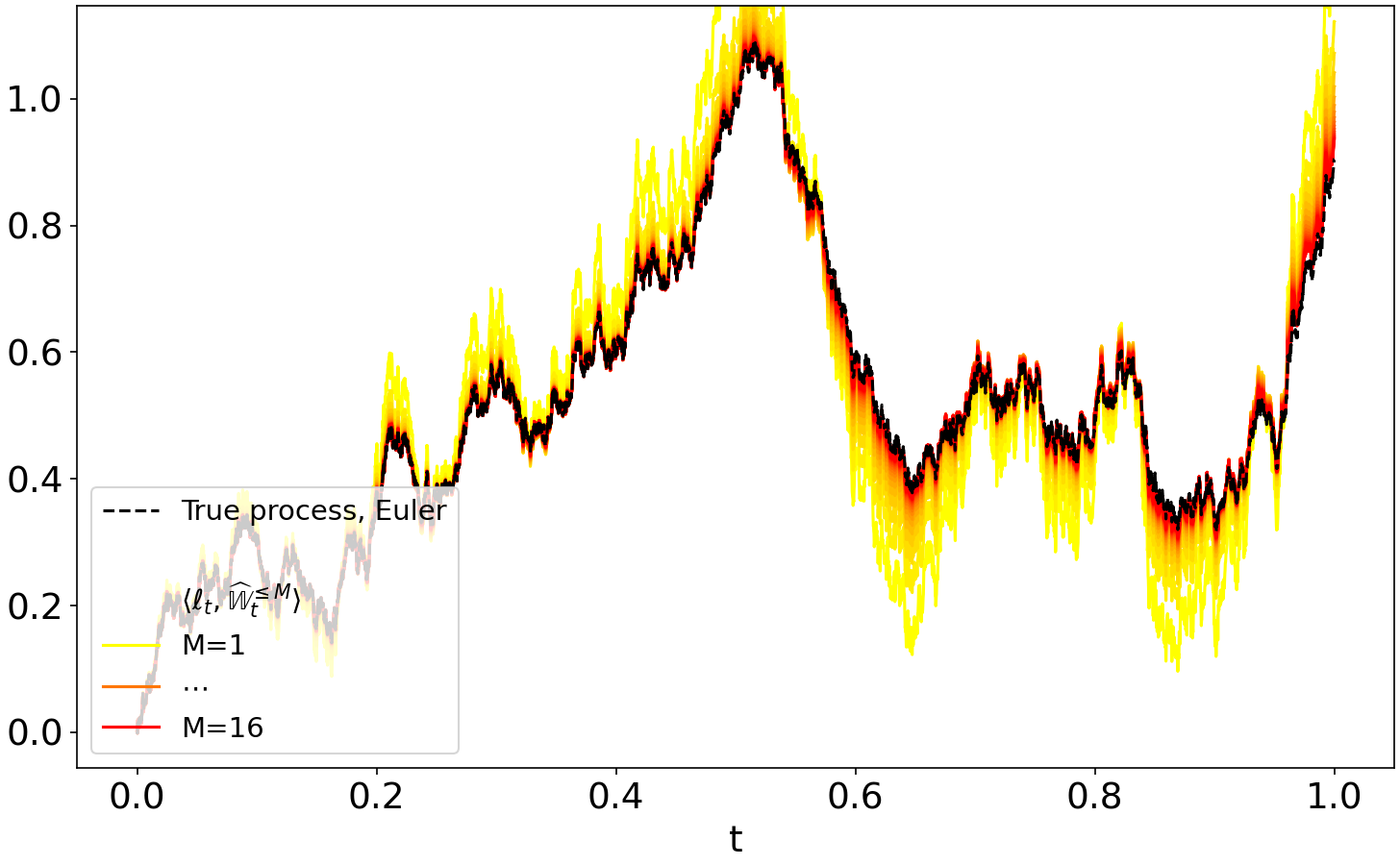}}
        \quad
        \subfloat[\centering $H=0.9$]{\includegraphics[width=\twoplotswidth]{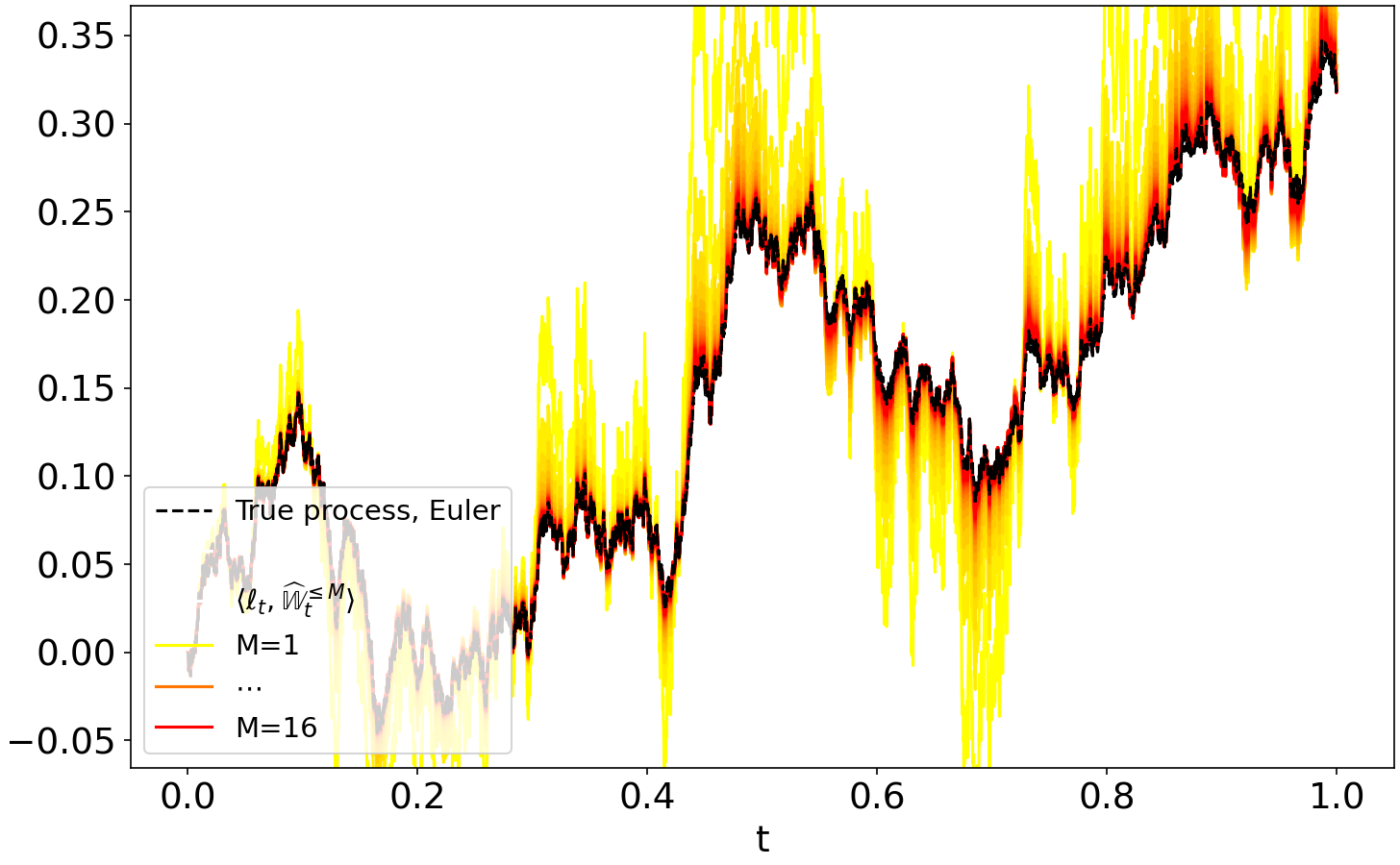}}
        \caption{Trajectories of a shifted Riemann-Liouville fractional Brownian motion (black) against their truncated time-dependent linear representation \protect \eqref{eq:linear-RL}, i.e. $\bracketsigtrunc[M]{\bell_{t + \varepsilon}^\textnormal{RL}}$, for several truncation orders $M$ and $\varepsilon=1/52$.}
        \label{fig:ex_traj-RL}
    \end{figure}

    \begin{table}[H]
        \centering
        \begin{tabular}{|l|cccc|}
            \hline
                    & H=0.1     & H=0.3     & H=0.7     & H=0.9     \\
            \hline
             M=2    & 9.915e-02 & 1.600e-02 & 4.015e-03 & 6.217e-03 \\
             M=4    & 4.735e-02 & 6.197e-03 & 9.022e-04 & 9.931e-04 \\
             M=8    & 1.851e-02 & 1.992e-03 & 1.819e-04 & 1.528e-04 \\
             M=16   & 5.298e-03 & 4.712e-04 & 2.793e-05 & 1.848e-05 \\
            \hline
        \end{tabular}
        \caption{Mean squared error between the shifted Riemann-Liouville fractional Brownian motion and its truncated time-dependent linear representation \protect \eqref{eq:linear-RL}, i.e.~$\bracketsigtrunc[M]{\bell_{t + \varepsilon}^\textnormal{RL}}$, for several truncation orders $M$ and $\varepsilon=1/52$, averaged across 100,000 simulations over 1000 time-steps.}
        \label{tab:mse_traj-RL}
    \end{table}
    
    We can see in Table~\ref{fig:ex_traj-RL} and Figure~\ref{tab:mse_traj-RL} the clear convergence of the truncated linear representation for several values of $H$. The smaller $H$ is, the slower the convergence. We can also see that the fit deteriorates with the time variable $t$.

\subsubsection{Linear delay process}

    Figure \ref{fig:ex_traj-DE} and Table~\ref{tab:mse_traj-DE} display simulations of a delay equation and its truncated representations together with mean-squared errors following Theorem~\ref{thm:delayed}. This convergence is shown for two sets of parameters: (a) has its $b_1=0$ and $x_i>0$ whereas (b) has $b_2=0$ and $x_i<0$.
    
    \begin{figure}[H]
        \centering
        \subfloat[\centering $z=0.25, a_1=0.25, b_1=0, n_1=1, w_1=-4, x_1=1, a_2=1, b_2=1, n_2=1, w_2=-2, x_2=1.5$]{\includegraphics[width=\twoplotswidth]{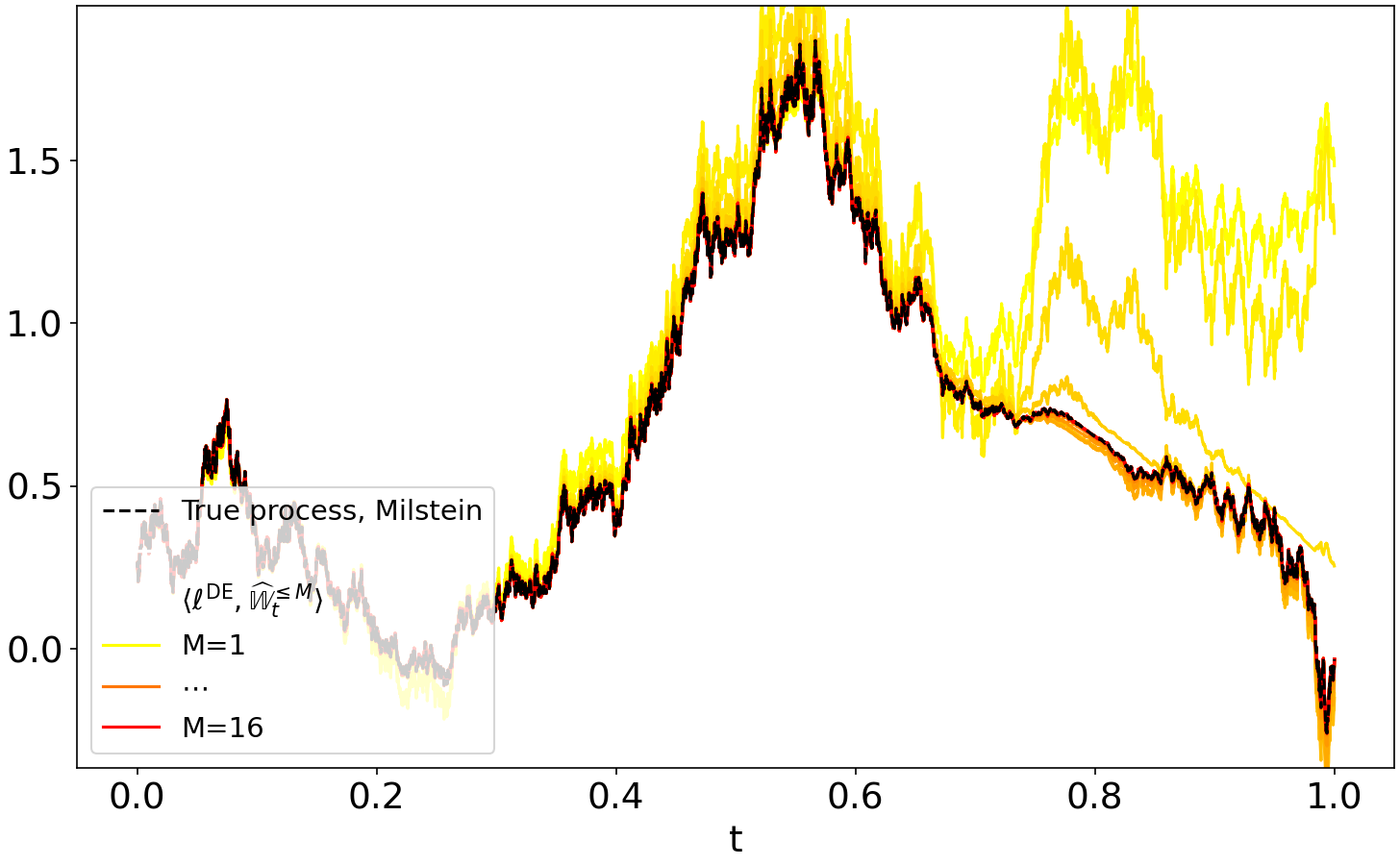}} 
        \quad
        \subfloat[\centering $z=0.25, a_1=0.25, b_1=-4, n_1=1, w_1=-4, x_1=-4, a_2=1, b_2=0, n_2=1, w_2=2, x_2=-2$]{\includegraphics[width=\twoplotswidth]{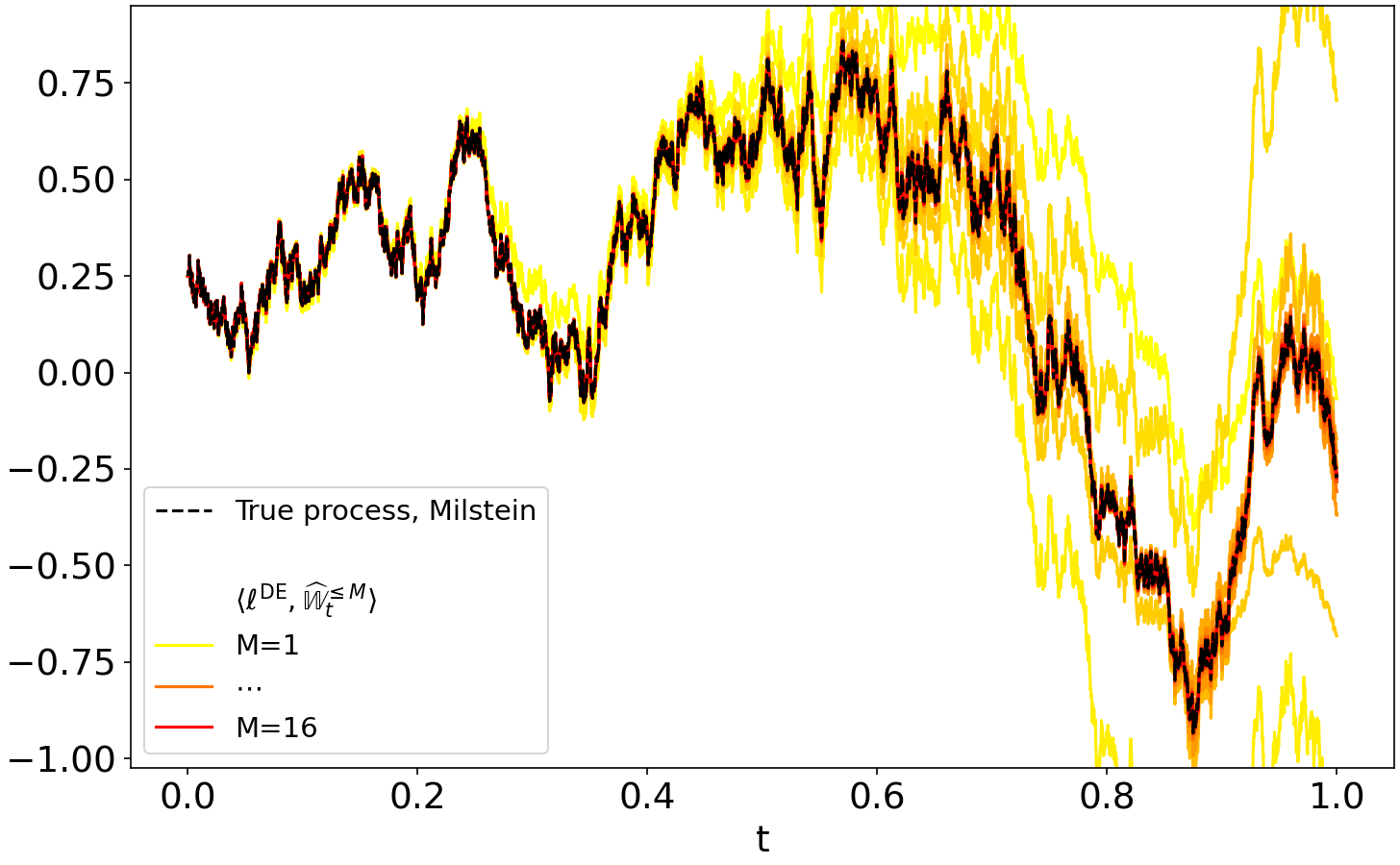}} 
        \caption{Trajectories of a delay process (black) against their truncated time-independent linear representation \protect \eqref{eq:lde}, i.e. $\bracketsigtrunc[M]{\lde}$, for several truncation orders $M$.}
        \label{fig:ex_traj-DE}
    \end{figure}

    \begin{table}[H]
        \centering
        \begin{tabular}{|l|cc|}
            \hline
                    & (a)       & (b)       \\
            \hline
             M=2    & 4.808e-01 & 4.405e-01 \\
             M=4    & 4.015e-02 & 2.795e-01 \\
             M=8    & 4.153e-04 & 2.511e-02 \\
             M=16   & 7.584e-07 & 2.739e-07 \\
            \hline
        \end{tabular}
        \caption{Mean squared error between the delay process and its truncated time-independent linear representation \protect \eqref{eq:lde}, i.e.~$\bracketsigtrunc[M]{\lde}$, for several truncation orders $M$, averaged across 100,000 simulations over 1000 time-steps.}
        \label{tab:mse_traj-DE}
    \end{table}
    
    We can see in Table~\ref{fig:ex_traj-DE} and Figure~\ref{tab:mse_traj-DE} the clear convergence of the truncated linear representation for the delay equation process. \\

    Recall that here since $\lde \in \Ah$, the inequality \eqref{eq:boundconvergenceh} could in principle yield a control of the strong error, i.e. for every $\epsilon > 0$, there exists $M_{\epsilon} \in \N$ such that 
    $$ \E \sqbra{\sup_{t \in [0, T]} \abs{\bracketsigtrunc[M_{\epsilon}]{\lde} - \bracketsig{\lde}}} \leq \epsilon. $$

    For related numerical illustrations with other processes, we refer to \citet{sig_vol}.

{\revone
\subsubsection{Linear Volterra process} \label{subsec:numerics_volterra}

    For $n \in \N$, we consider a linear Volterra process $Y^n$ solution to the Volterra equation \eqref{eq:volterra}, where the kernel is given by a sum of $n$ weighted exponentials
    \begin{align} \label{eq:kernelepsn}
        K_n^{\alpha, \varepsilon} (t) = \sum_{i=1}^n w_i e^{-x_i t}.
    \end{align}
    We denote by $\lvol_n$ the exact coefficients given from the signature  representation in Theorem~\ref{thm:volterra} and we consider $\langle \lvol_n, \sig^{\leq M} \rangle$ its truncated exact representation at level $M \geq 1$. The parameters $(x_i, w_i)_{i=1,\ldots,n}$ are chosen in such a way to optimally approximates the $\varepsilon$-shifted fractional kernel
    \begin{align}
        K^{\alpha, \varepsilon} (t) := \frac{(t+\varepsilon)^{\alpha-1}}{\Gamma(\alpha)} = \int_0^\infty e^{yt} \mu^{\alpha, \varepsilon} (\d y),
        \quad \text{with} \quad
        \mu^{\alpha, \varepsilon} (\d y) := \frac{y^{-\alpha} e^{-y \varepsilon}}{\Gamma(\alpha) \Gamma(1-\alpha)} \d y.
    \end{align}
    Given a partition $(\eta_i)_{0 \leq i \leq n}$ where $0 = \eta_0 < \eta_1 < \cdots < \eta_n < \infty$, we can readily compute its corresponding points $x_i$ and weights $w_i$
    \begin{align}
        w_i
        &= \int_{\eta_{i-1}}^{\eta_i} \mu^{\alpha, \varepsilon} (\d y)
        = \varepsilon^{\alpha-1} \frac{\Gamma \left( 1-\alpha; \varepsilon \, \eta_{i-1}, \varepsilon \, \eta_i \right)}{\Gamma(\alpha) \Gamma(1-\alpha)},
        \\
        x_i
        &= \frac{1}{w_i} \int_{\eta_{i-1}^n}^{\eta_i^n} y \; \mu^{\alpha, \varepsilon} (\d y)
        = \frac{\varepsilon^{\alpha-2}}{w_i} \frac{\Gamma \left( 2-\alpha; \varepsilon \, \eta_{i-1}, \varepsilon \, \eta_i \right)}{\Gamma(\alpha) \Gamma(1-\alpha)},
    \end{align}
    where $\Gamma \left( z; a, b \right) = \int_a^b t^{z-1} e^{-t} \d z$ is the incomplete gamma function. We chose a geometric partition $\eta_i = (r_n)^{i - n/2}$ for some $L^2$-optimal $r_n>0$ as in \cite{abi2019lifting, jaber2018multifactor, sergio_fract}, i.e.
    \begin{align}
        r_n
        &= \arg \min_r || K_n^{\alpha, \varepsilon} - K^{\alpha, \varepsilon} ||_{L^2(\tau, T)}
        \\
        &= \arg \min_r \left( \sum_{i,j=1}^n w_i w_j \frac{e^{-(x_i + x_j) \tau} - e^{-(x_i + x_j) T}}{x_i + x_j} - 2 \sum_{i=1}^n w_i (x_i)^{-\alpha} e^{x_i \varepsilon} \frac{\Gamma \left( \alpha; (\tau + \varepsilon) \, x_i, (T + \varepsilon) \, x_i \right)}{\Gamma{(\alpha)}} \right).
    \end{align}

    In Figure~\ref{fig:ex_traj-VOL}, and later in Figure~\ref{fig:mom-VOL_m}, we compare the truncated signature representation $\langle \lvol_n, \sig^{\leq M} \rangle$ of the multifactor approximation of the Volterra process $Y^n$ with the kernel \eqref{eq:kernelepsn} to the Euler discretization of the (true) Volterra process $Y$, i.e.
    $$ Y_t = \int_0^t K^{\alpha, \epsilon} (t-s) ((a_1 + b_1 Y_s) \d s + ((a_2 + b_2 Y_s) \d W_s). $$
    In particular, this provides a numerical illustration of the approximation result given in Corollary~\ref{corr:convergence-K^n}.  Figure~\ref{fig:ex_traj-VOL} shows that for different values of $\alpha$, the truncated signature of the multifactor approximation with $n=10$ factors yields very accurate sample paths, at least for short time horizons. 
    \begin{figure}[H]
        \centering 
        \subfloat[\centering $\alpha=0.2, r_n=2.936$]{\includegraphics[width=\twoplotswidth]{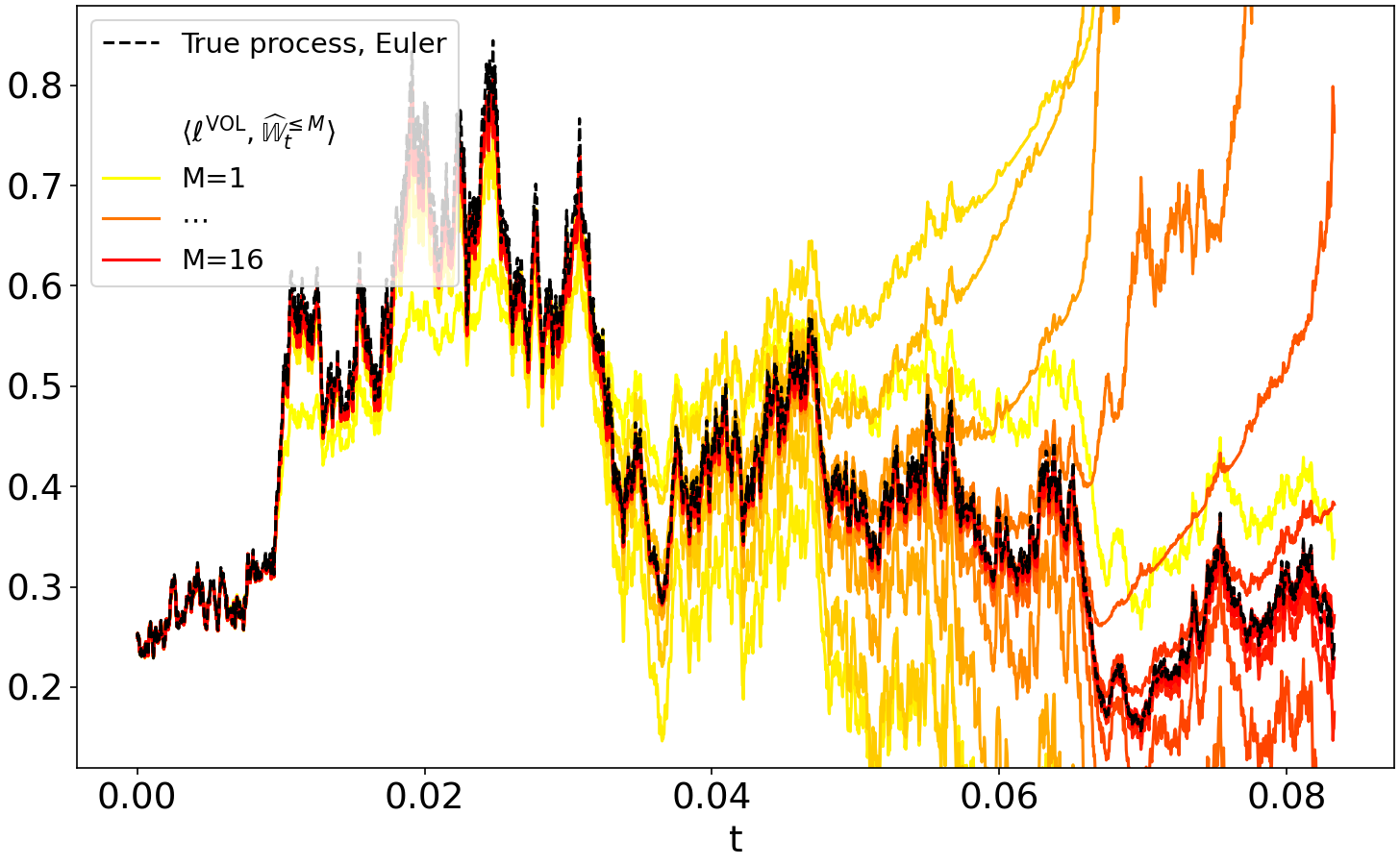}}
        \quad
        \subfloat[\centering $\alpha=0.4, r_n=2.884$]{\includegraphics[width=\twoplotswidth]{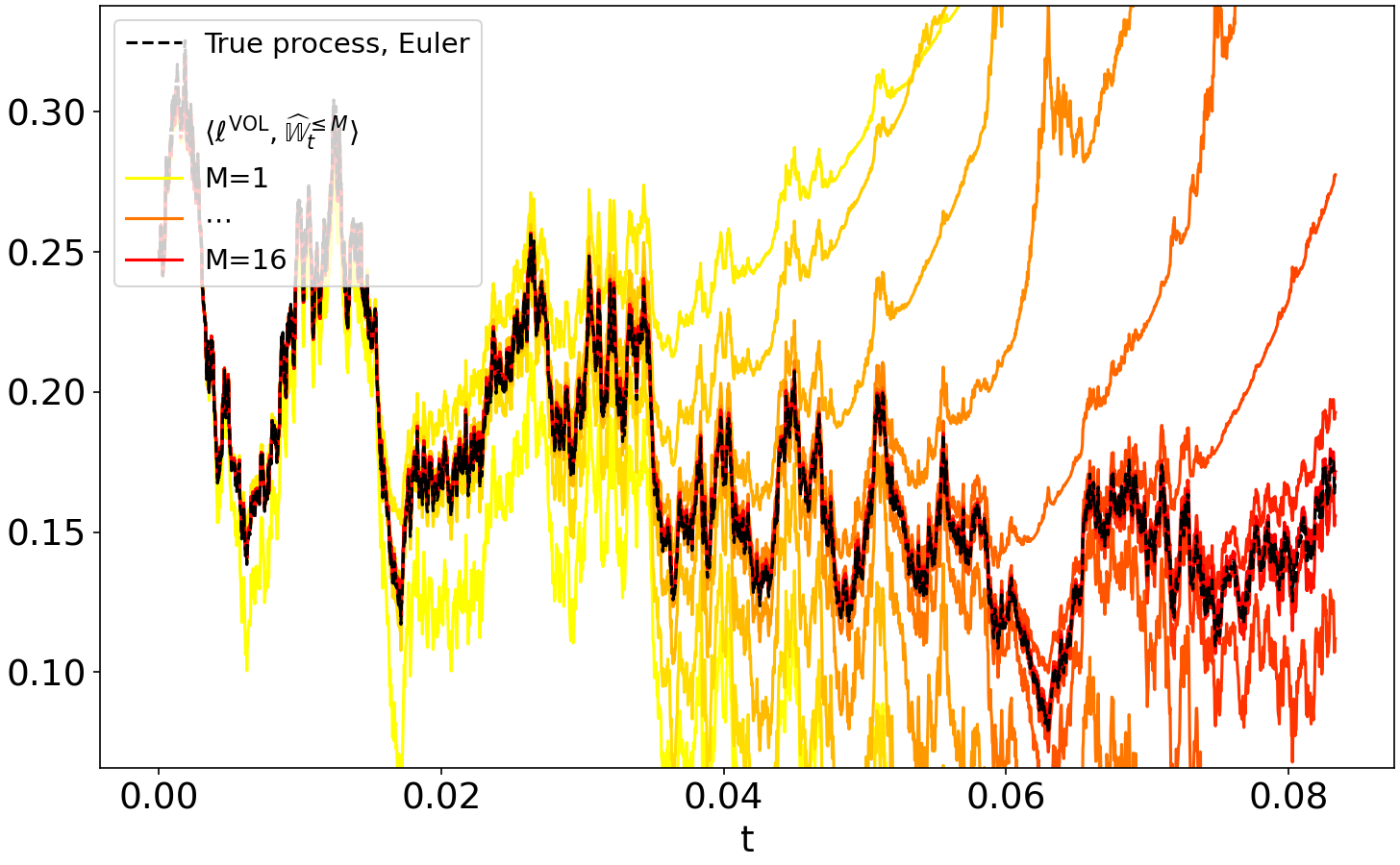}}
        \qquad
        \subfloat[\centering $\alpha=0.6, r_n=2.831$]{\includegraphics[width=\twoplotswidth]{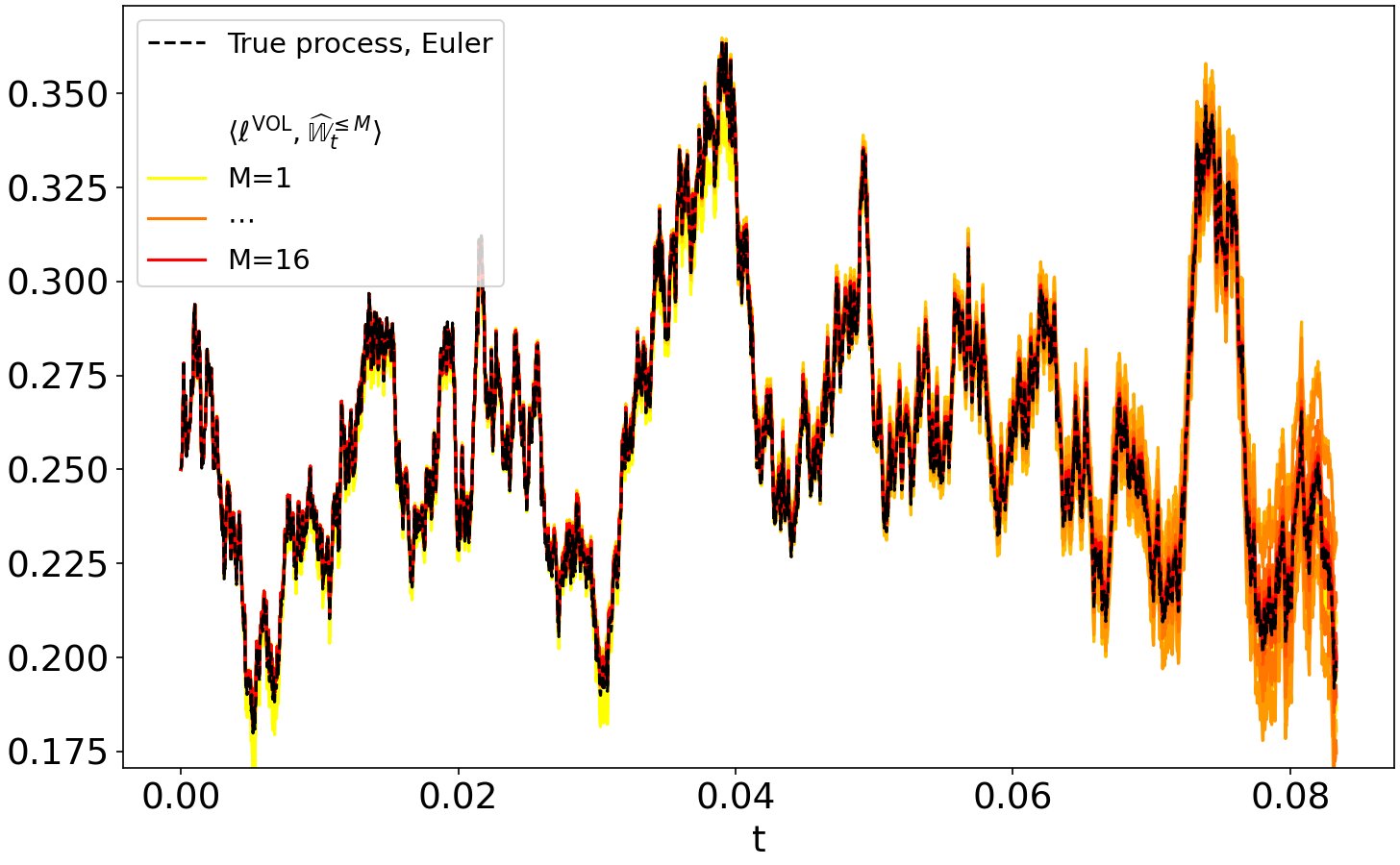}}
        \quad
        \subfloat[\centering $\alpha=0.8, r_n=2.778$]{\includegraphics[width=\twoplotswidth]{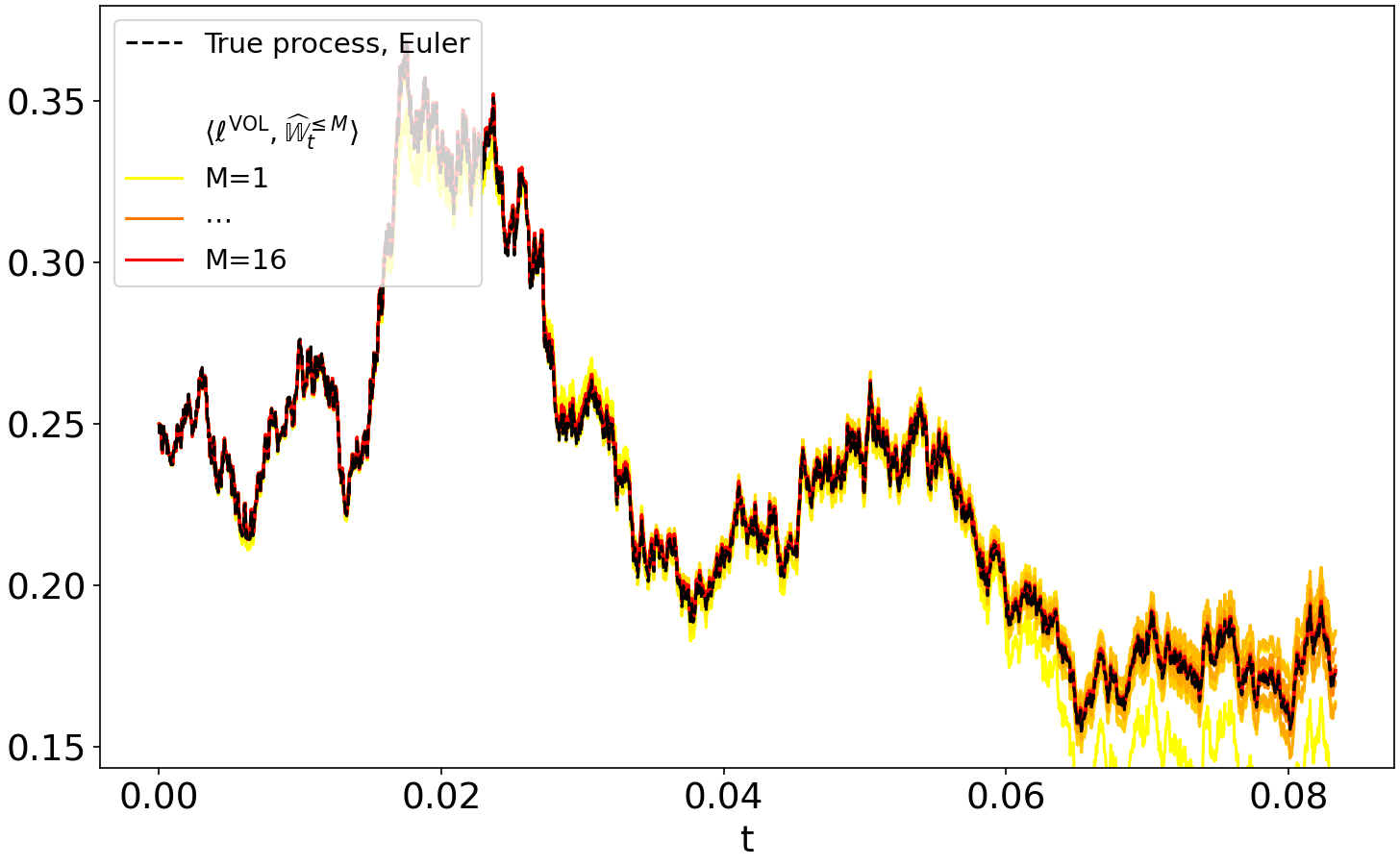}}
        \caption{Trajectories of the shifted fractional Volterra process (black) against their truncated time-independent linear multifactor approximation representation \protect \eqref{eq:lvol}, i.e.~$\bracketsigtrunc[M]{\lvol_n}$, for several truncation orders $M$ and $y=0.25, a_1=0.25, b_1=-1, a_2=0, b_2=1, \varepsilon=1/52, T=1/12, \tau=1/106, n=10$.}
        \label{fig:ex_traj-VOL}
    \end{figure}

\section{Conditional moments} \label{sec:conditional_moments}

    As a key application of Section~\ref{sec:volterra} and Section~\ref{sec:delayed}, our representation formulas provide explicit expressions for the conditional moments of Volterra and delay processes, which can be efficiently computed numerically without relying on Monte Carlo simulations. \\
    
    The computation of unconditional and conditional moments is a central but notoriously difficult problem for non-Markovian processes. Our approach leverages the Markovianity of the signature of $\widehat{\mathbb{W}}$ to make even conditional moments straightforward to obtain. The linear Volterra process \eqref{eq:volterra} is a particular case of a polynomial Volterra process studied in \cite{polynomial_volterra}, where unconditional moments were derived in terms of deterministic integral equations, without any treatment of conditional moments or numerical methods. To the best of our knowledge, this is the first time that explicit formulas are used to implement efficient numerics for both unconditional and conditional moments in this setting. Moreover, the delayed case \eqref{eq:sde-DE-exp} with nonzero $b_2$ or $K_2$ lies outside the polynomial Volterra family and thus cannot be recovered from \cite{polynomial_volterra}, further underlining the significance of our representation formulas. \\
    
    Theorem~\ref{thm:bell_shupow_m_conditional} establishes the conditional moment formulas combining our representation formulas in \eqref{eq:lvol} and \eqref{eq:lde} with the celebrated Fawcett formula \cite{fawcett}: 
    \begin{align*}
        \E[\sig] = \fawcett, \quad t \leq T, 
    \end{align*}
    with
    $$ \fawcett := \conexp{\bra{\word{1} + \half \word{22}} t}. $$

    Before stating Theorem~\ref{thm:bell_shupow_m_conditional}, we will generalize the projection defined in \eqref{eq:projection} against linear functionals of the form
    $$ \bphi = \conexp{a_1 \word{1} + b_2 \word{22}} = \sum_{m=0}^\infty \frac{1}{m!} \bra{a_1 \word{1} + a_2 \word{22}} \conpow{m}, $$
    and show that under this framework it is the adjoint concatenation operation.
    
    \begin{lemma} \label{lem:proj_fawcett}
        Let $\bell \in \Aexp$ and $\bphi = \conexp{a_1 \word{1} + \sum_{\word{i} \in \set{\word{2}, \cdots, \word{d}}} a_i \word{ii}}$ for some constants $a_1, a_2, \cdots, a_d \in \R$. Then
        $$ \bell |_{\bphi} := \sum_{n=0}^\infty \sum_{\word{v} \in V_n} \bphi^\word{v} \bell \proj{v} \in \Aexp $$
        and 
        $$ \bracketsig{\bell |_{\bphi}} = \bracket{\bell}{\sig \otimes \bphi}, \quad t \leq T. $$
    \end{lemma}
    
    \begin{proof}
        Without loss of generality, we will assume $d=2$. Now, recall that for all $C > 0$ and $\word{v} \in V_n$ with $n \in \N$, $\shuexp{C (\word{1} + \word{2})} \proj{v} \dominated C^n \shuexp{C (\word{1} + \word{2})}$. This essentially means that we are only interested in the length $n$ of $\word{v}$ when dominating projections of elements of $\Aexp$. Then, since by assumption on $\bell$, there exists $C > 1$ such that $\bell \dominated C \shuexp{C (\word{1} + \word{2})}$, we can write
        \begin{align}
            \bell |_{\bphi}
            &
            \dominated \sum_{m=0}^\infty \frac{1}{m!} \sum_{k=0}^m \binom{m}{k} \abs{a_1}^{m-k} \abs{a_2}^k C \shuexp{C (\word{1} + \word{2})} |_{\word{1} \conpow{m-k} \word{2} \conpow{2k}}
            \\
            &
            \dominated C \shuexp{C (\word{1} + \word{2})} \sum_{m=0}^\infty \sum_{k=0}^m \frac{\abs{a_1}^{m-k}}{(m-k)!} \frac{\abs{a_2}^k}{k!} C^{m+k}
            \\
            &
            \dominated C \shuexp{C (\word{1} + \word{2})} e^{\bra{C \abs{a_1} + C^2 \abs{a_2}}},
        \end{align}
        which clearly shows $\bell |_{\bphi} \in \Aexp$. We can now decompose $\bracketsig{\bell |_{\bphi}}$ as follows
        \begin{align}
            \bracketsig{\bell |_{\bphi}}
            &
            = \sum_{n=0}^\infty \sum_{\word{v} \in V_n}  \bphi^\word{v} \bracketsig{\bell \proj{v}}
            \\
            &
            = \sum_{n=0}^\infty \sum_{\word{v} \in V_n}   \bphi^\word{v} \sum_{m=0}^\infty \sum_{\word{u} \in V_m} \bell^\word{uv} \sig^\word{u}
            \\
            &
            = \sum_{n=0}^\infty \sum_{m=0}^\infty  \sum_{\word{w} \in V_{m+n}} \bell^\word{w} \sum_{\word{v} \in V_n} \sum_{\word{u} \in V_m} \sig^\word{u} \bphi^\word{v} \indic{\word{w} = \word{uv}}
            \\
            &
            = \sum_{k=0}^\infty \sum_{n=0}^k \sum_{\word{w} \in V_{k}} \bell^\word{w} \sum_{\word{v} \in V_n} \sum_{\word{u} \in V_{k-n}} \sig^\word{u} \bphi^\word{v} \indic{\word{w} = \word{uv}}
            \\
            &
            = \sum_{k=0}^\infty \sum_{\word{w} \in V_k} \bell^\word{w} \bra{\sig \otimes \bphi}^\word{w}
            \\
            &
            = \bracket{\bell}{\sig \otimes \bphi},
        \end{align}
        thereby completing the proof.
    \end{proof}

    \begin{theorem} \label{thm:bell_shupow_m_conditional}
        Consider the representation formulas for $Y$ and $Z$ given by  \eqref{eq:lvol} and \eqref{eq:lde}.  
        Let $m \in \N$, then
        \begin{align}
            \E \sqbra{Y_T^m \mid \F_t}
            = \bracketsig{(\lvol) \shupow{m} |_{\fawcett[T-t]}},
            \quad
            \E \sqbra{Z_T^m \mid \F_t}
            = \bracketsig{(\lde) \shupow{m} |_{\fawcett[T-t]}},
            \quad t \leq T. 
        \end{align}
    \end{theorem}

\begin{proof}
        By Theorems~\ref{thm:volterra} and \ref{thm:delayed}, we have $\lvol, \lde \in \Aexp$. Proposition~\ref{prop:Aexp_closed}~\ref{prop:Aexp_closed_shuffle} thus makes it clear that $(\lvol) \shupow{m}, (\lde) \shupow{m} \in \Aexp$. We then apply the shuffle property of Proposition~\ref{prop:shuffle_property} to write
        $$ Y_T^m = \bracketsig[T]{(\lvol) \shupow{m}}, \qquad Z_T^m = \bracketsig[T]{(\lde) \shupow{m}}. $$
        Applying the dominated convergence theorem, we get
        $$ \E \sqbra{\bracketsig[T]{\bell \shupow{m}} \mid \F_t} = \bracket{\bell \shupow{m}}{\E \sqbra{\sig[T] \mid \F_t}}, $$
        for $\bell \in \big\{ \lvol, \lde \big\}$. Using Chen's identity \cite{chen1957integration}, together with Fawcett's formula \cite{fawcett}, we have
        $$ \E \sqbra{\sig[T] \mid \F_t} = \sig[t] \otimes \E \sqbra{\sig[T-t] \mid \F_t} = \sig[t] \otimes \fawcett[T-t]. $$
        Finally, we apply Lemma~\ref{lem:proj_fawcett} which completes the proof.
    \end{proof}


    Figure~\ref{fig:mom-VOL_m} displays the unconditional and conditional third and fourth moments of the linear Volterra process as in Subsection~\ref{subsec:numerics_volterra} computed using Theorem~\ref{thm:bell_shupow_m_conditional}, illustrating the convergence as the truncation level $M$ increases for relatively short maturities. The reference value and the confidence intervals in black are computed using Monte Carlo simulation with 50,000 simulations and 500 time steps. We recall that, as was the case for Figure~\ref{fig:ex_traj-VOL}, the moments of the true process $Y$ are estimated with the true kernel $K^{\alpha, \epsilon}$, and the signature representations are computed using the multifactor approximation. This illustrates once again the usefulness of Corollary~\ref{corr:convergence-K^n}.


    
    \begin{figure}[H]
        \centering
        \subfloat[\centering Unconditional, $m=3$]{\includegraphics[width=\twoplotswidth]{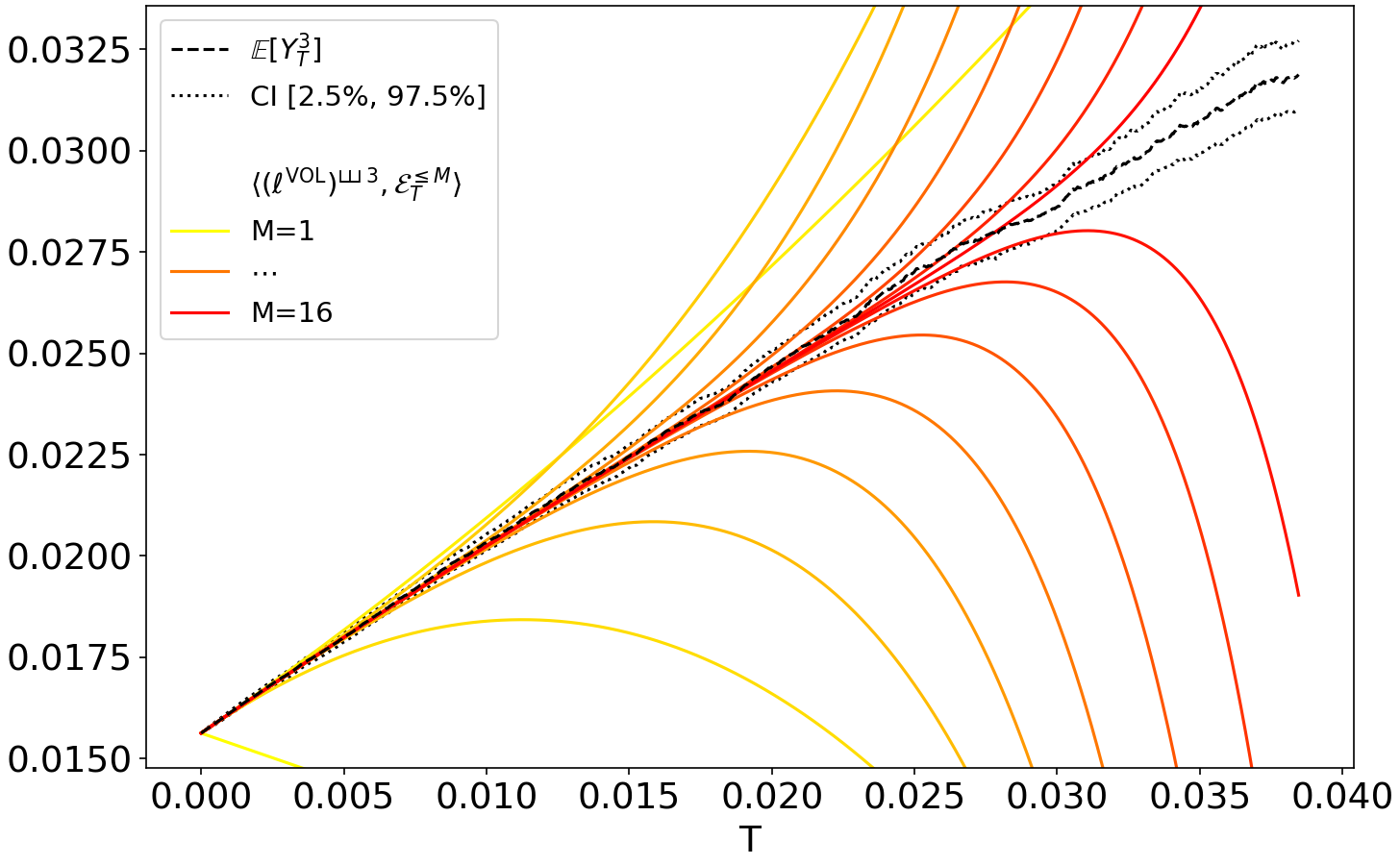}}
        \quad
        \subfloat[\centering Unconditional, $m=4$]{\includegraphics[width=\twoplotswidth]{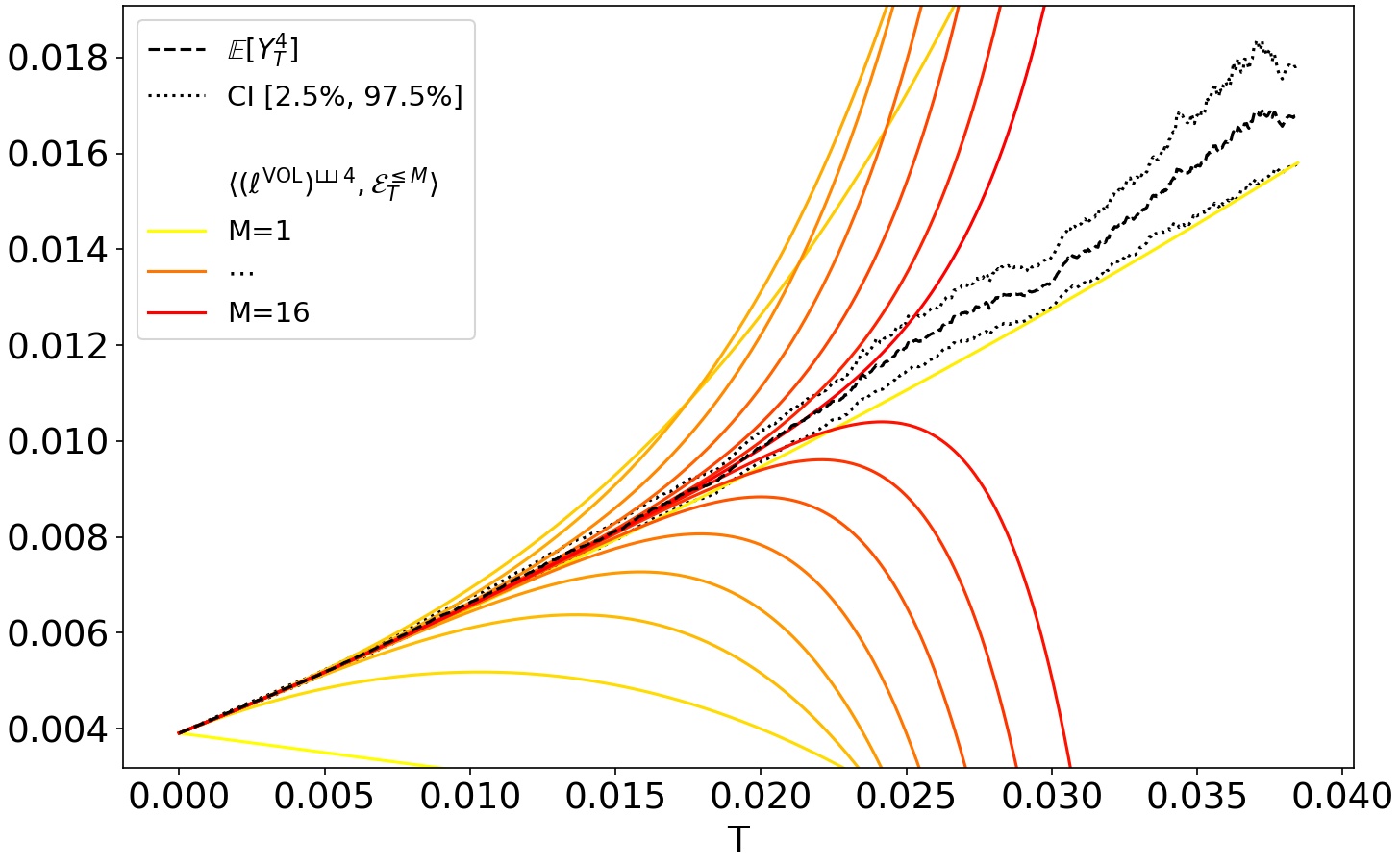}}
        \qquad
        \subfloat[\centering Conditional, $m=3$]{\includegraphics[width=\twoplotswidth]{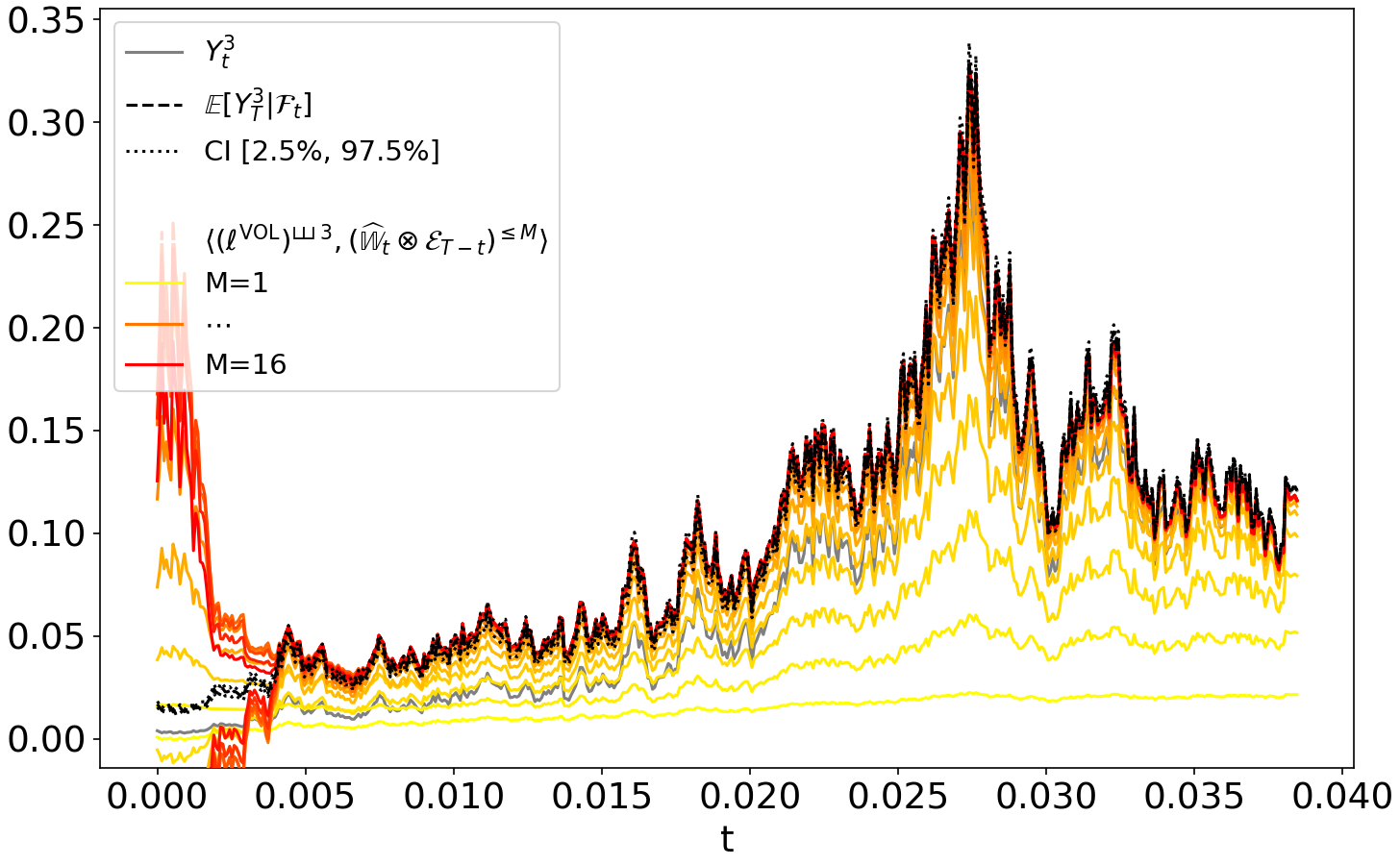}}
        \quad
        \subfloat[\centering Conditional, $m=4$]{\includegraphics[width=\twoplotswidth]{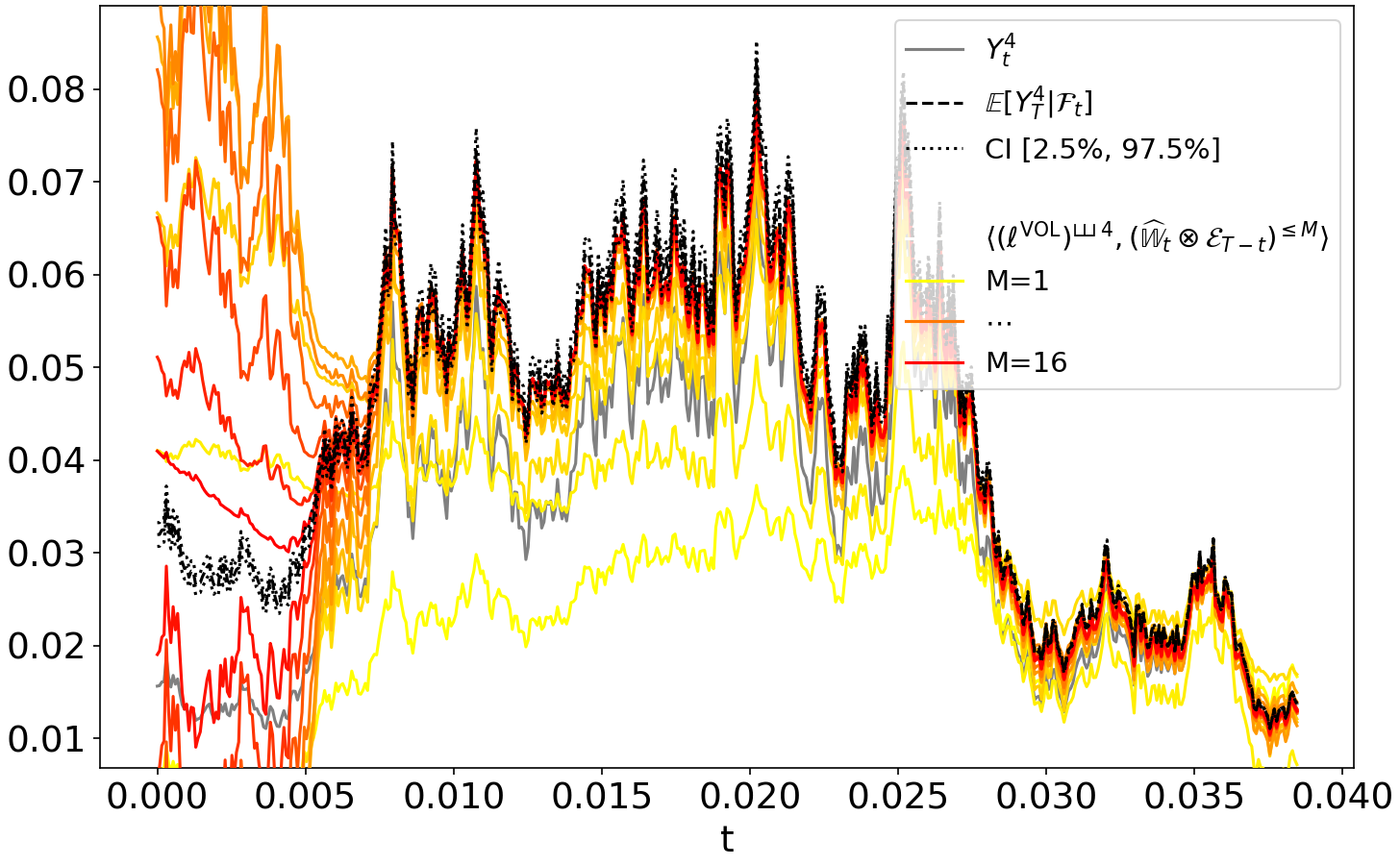}}
        \caption{$m^{th}$ unconditional, (a) and (b), and conditional, (c) and (d), moments of the shifted fractional Volterra process (black) against their truncated time-independent linear multifactor approximation representation \protect \eqref{eq:lvol} to the $m^{th}$ (shuffle) power, i.e.~$\bracketsigtrunc[M]{(\lvol_n) \shupow{m}}$, for several truncation orders $M$ and $y=0.25, a_1=0.25, b_1=-1, a_2=-0.1, b_2=1, \alpha=0.6, \varepsilon=1/52, T=1/26, \tau=1/106, n=10$.}
        \label{fig:mom-VOL_m}
    \end{figure}

    Finally, Figure~\ref{fig:mom-DE_m} displays the unconditional and conditional third and fourth moments of the linear Delayed equation process as in Section~\ref{sec:delayed} computed using Theorem~\ref{thm:bell_shupow_m_conditional}, illustrating the convergence as the truncation level $M$ increases for longer maturities with this choice of parameters. The reference value and the confidence intervals in black are also computed using Monte Carlo simulation with 50,000 simulations and 500 time steps.
    
    \begin{figure}[H]
        \centering
        \subfloat[\centering $m=3$]{\includegraphics[width=\twoplotswidth]{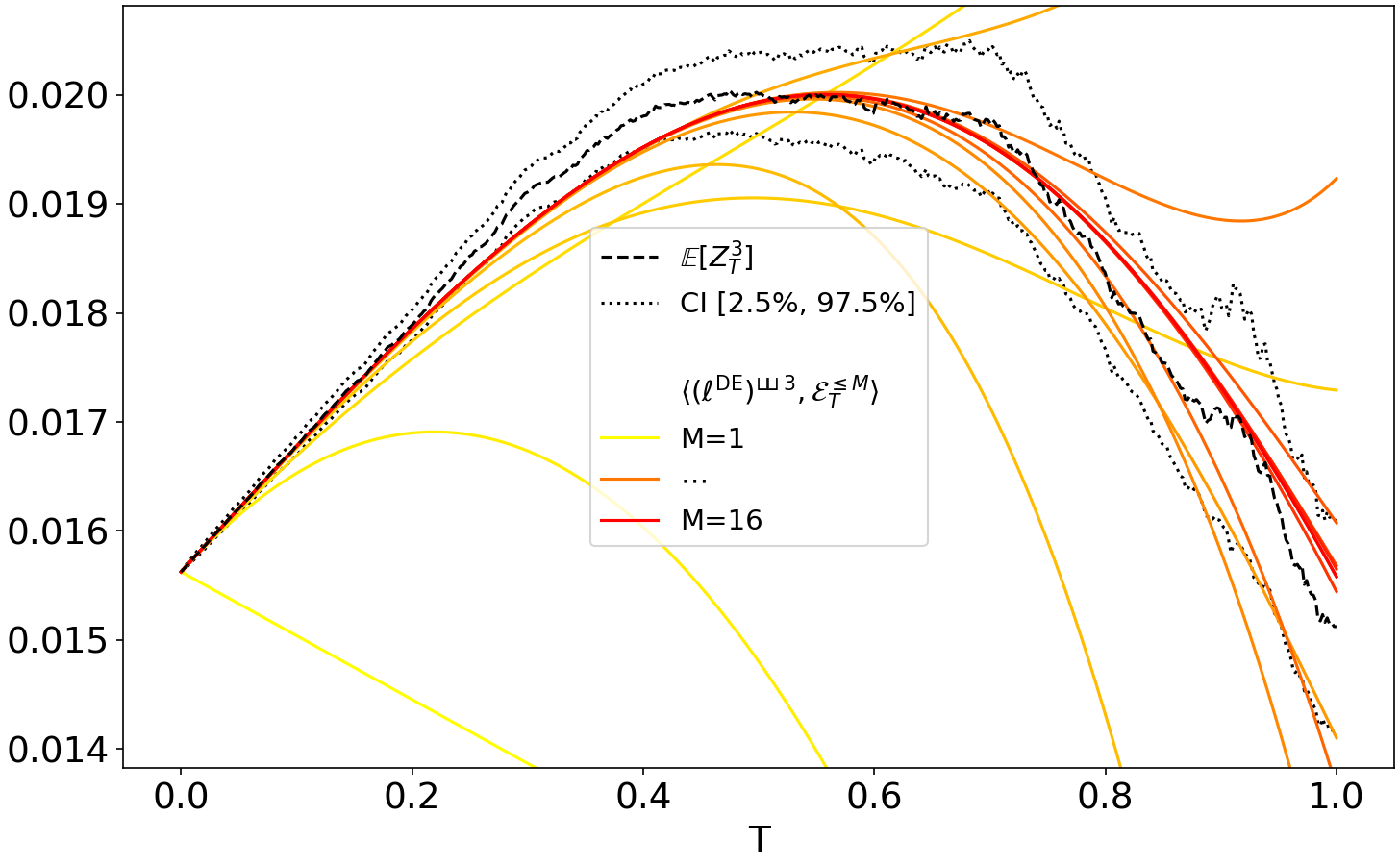}}
        \quad
        \subfloat[\centering $m=4$]{\includegraphics[width=\twoplotswidth]{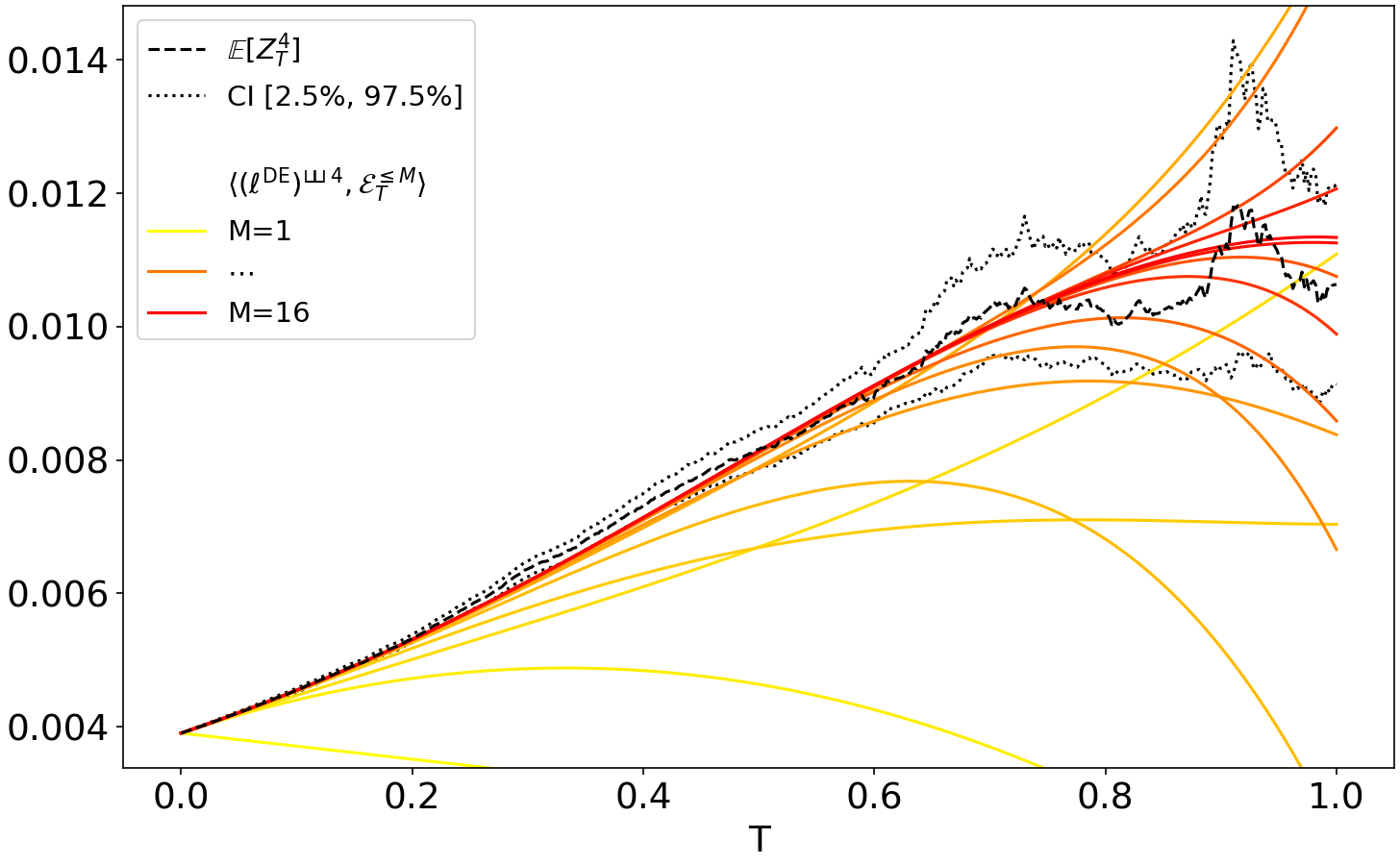}}
        \qquad
        \subfloat[\centering $m=3$]{\includegraphics[width=\twoplotswidth]{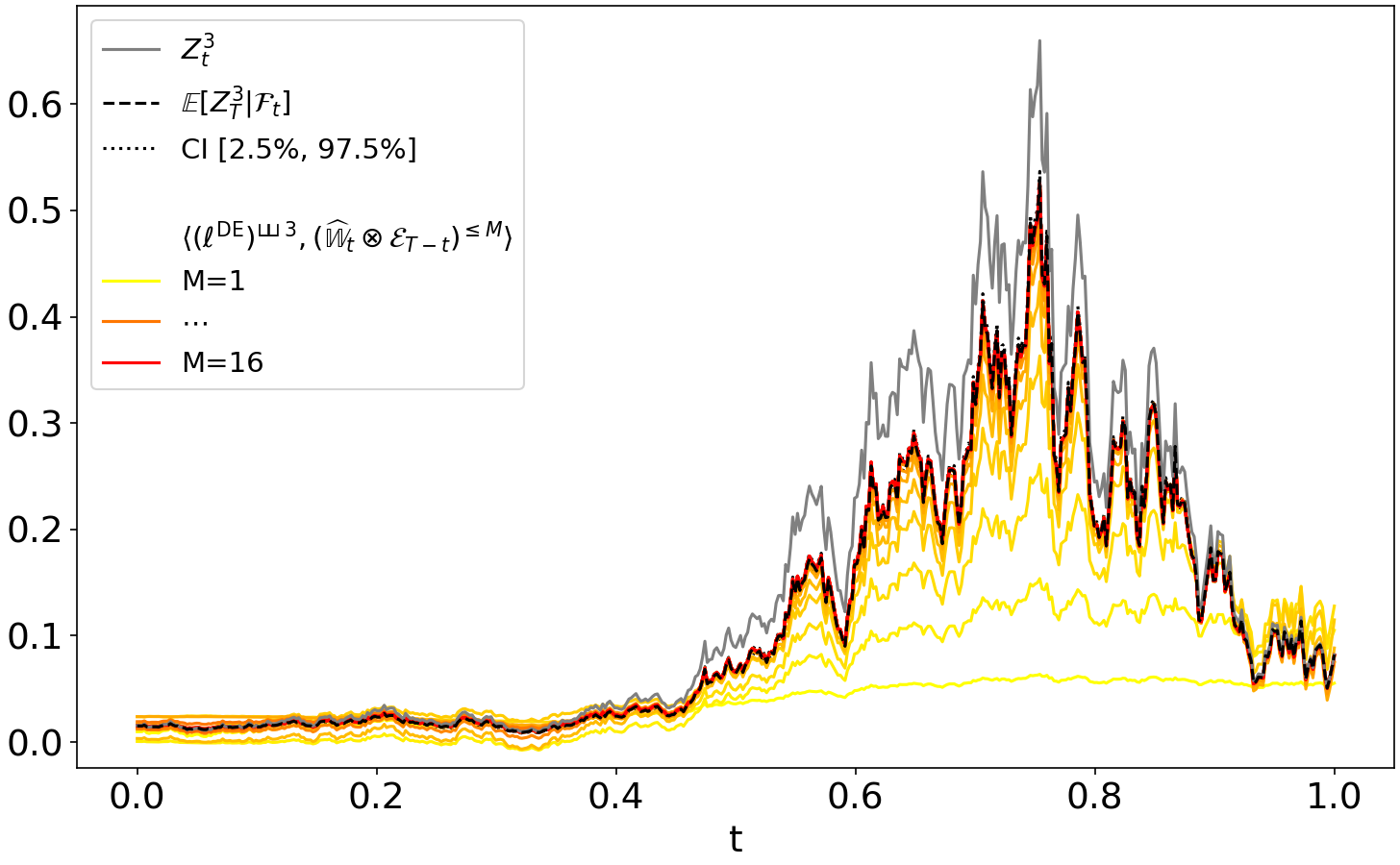}}
        \quad
        \subfloat[\centering $m=4$]{\includegraphics[width=\twoplotswidth]{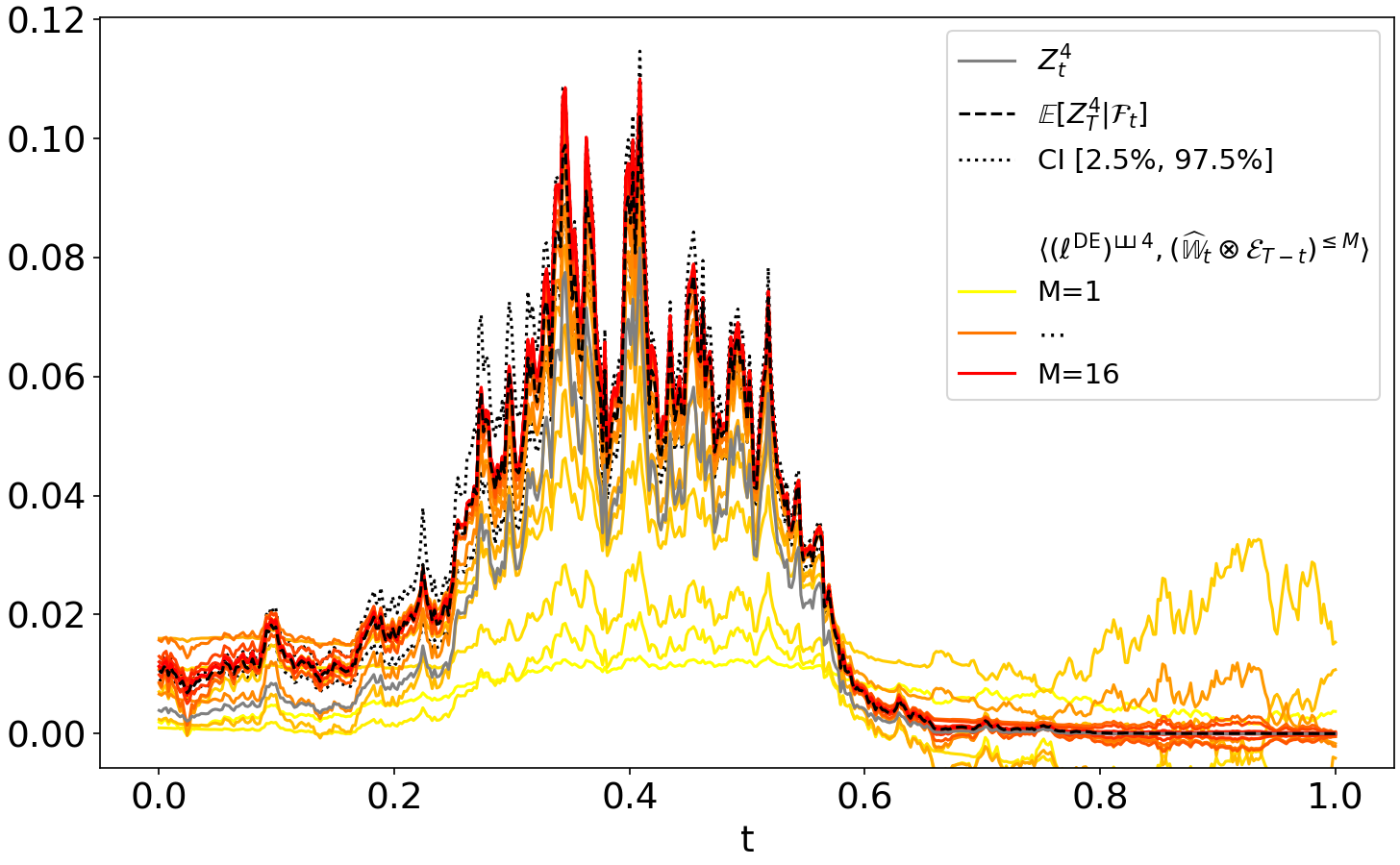}}
        \caption{$m^{th}$ unconditional, (a) and (b), and conditional (c) and (d) moments of the Delayed equation process \protect \eqref{eq:sde-DE-exp} to the $m^{th}$ (shuffle) power, i.e.~$\bracketsigtrunc[M]{(\lde) \shupow{m}}$, for several truncation orders $M$ and $z=0.25, a_1=0.125, b_1=-0.5, c_1=-0.5, \alpha_1=1, a_2=0, b_2=0.5, c_2=0.5, \alpha_2=-1$.}
        \label{fig:mom-DE_m}
    \end{figure}
}

\section{Proof of Theorem~\ref{thm:volterra}} \label{app:volterra}
    {\revtwo
    In a first easy step, we will show that $\pvol$ and $\qvol$ both belong to $\Aexp$. Recall the definition of the kernels $K_1$ and $K_2$ \eqref{eq:assKlinear} and the assumption~\eqref{eq:measure_condition} on them. We can now readily exponentially dominate them:
    \begin{align}
        \pvol &
        = a_1 \word{1} \int_{[0, \infty)} \shuexp{-x \word{1}} \mu_1(\d x) + a_2 \bra{\word{2} - \half K_2(0) b_2 \word{1}} \int_{[0, \infty)} \shuexp{-x \word{1}} \mu_2(\d x) + y \emptyword
        \\ &
        \dominated \bra{\abs{a_1} + \abs{a_2} + \half \abs{a_2b_2 K_2(0)}} (\word{1} + \word{2}) \sum_{n=0}^\infty \word{1} \conpow{n} \int_{[0,\infty)} x^n (\mu_1(\d x) + \mu_2(\d x))
        \\ &
        \dominated \bra{\abs{a_1} + \abs{a_2} + \half \abs{a_2b_2 K_2(0)}} (\word{1} + \word{2}) \shuexp{M \word{1}} + \abs{y} \emptyword,
        \\ \qvol &
        = b_1 \word{1} \int_{[0, \infty)} \shuexp{-x \word{1}} \mu_1({\d x}) + b_2 \bra{\word{2} - \half K_2(0) b_2 \word{1}} \int_{[0, \infty)} \shuexp{-x \word{1}} \mu_2(\d x)
        \\ &
        \dominated \bra{\abs{b_1} + \abs{b_2} + \half \abs{b_2^2 K_2(0)}} (\word{1} + \word{2}) \shuexp{M \word{1}}.
    \end{align}
    By Corollary~\ref{coro:pq_Aexp_implies_ell_Aexp}, $\lvol$ belongs to $\Aexp$ and in particular to $\I(\widehat{W})$. The process  
    \begin{align} \label{eq:barY}
       \bar{Y}_t :=\bracketsig{\lvol} 
    \end{align}
    is then well-defined and admits the Itô decomposition
    \begin{align} \label{eq:vol_decomposition}
        \bar{Y}_t &
        = \bra{\lvol}^\emptyword
        + \int_0^t \bracketsig[s]{\lvol \proj{1} + \tfrac{1}{2} \lvol \proj{22}} \d s
        + \int_0^t \bracketsig[s]{\lvol \proj{2}} \d W_s.
    \end{align}

    It then remains to prove that $\bar{Y}$ solves the equation
    \begin{align} \label{eq:barYequation}
        \begin{split}
            \d \bar{Y}_t &
            = K_1(0) \bra{a_1 + b_1 \bar{Y}_t} \d t + \int_0^t K_1'(t-s) \bra{a_1 + b_1 \bar{Y}_s} \d s \d t
            \\ &
            \quad + K_2(0) \bra{a_2 + b_2 \bar{Y}_t} \d W_t + \int_0^t K_2'(t-s) \bra{a_2 + b_2 \bar{Y}_s} \d W_s \d t,
            \\ \bar{Y}_0 &
            = y,
        \end{split}
    \end{align}
    which is the differential version of \eqref{eq:volterra} obtained from Itô's lemma, since the kernels $K_1, K_2$ are smooth. Then, we immediately deduce that $Y_t = \bar{Y}_t = \bracketsig{\lvol}$ by uniqueness of the strong solution, which will complete the proof of Theorem~\ref{thm:volterra}.
    }

    \begin{lemma} \label{lem:lvolbrackets}
        We have that 
        \begin{align}
            \bracketsig{\lvol \proj{1} + \tfrac{1}{2} \lvol \proj{22}} &
            = K_1(0) \bra{a_1 + b_1 \bar{Y}_t}
            \\ &
            \quad + \int_0^t K_1'(t-s) \bra{a_1 + b_1 \bar{Y}_s} \d s
            + \int_0^t K_2'(t-s) \bra{a_2 + b_2 \bar{Y}_s} \d W_s, \label{eq:lvol1bracket}
            \\ \bracketsig{\lvol \proj{2}} &
            = K_2(0) (a_2 + b_2 \bar{Y}_t). \label{eq:lvol2bracket}
        \end{align}
    \end{lemma}
    
     
    

\subsection{Proof of Lemma~\ref{lem:lvolbrackets}} \label{S:proofLemmalvolbrackets}

    Now, as we will be using integral of elements of $\eTA[2]$, recall the Definition~\ref{def:integral_sig}, we need to make sure its bracket with the signature and concatenation make sense. This is the goal of the next two lemmas:
    
    \begin{lemma} \label{lem:concat}
        Let $\bp \in L^1(\eTA[2], \mu)$, then $\bell \bp, \bp \bell \in L^1(\eTA[2], \mu)$ for every $\bell \in \eTA[2]$,
        $$ \int_{[0, \infty)} \bell \bp(x) \mu(\d x) = \bell \bra{\int_{[0, \infty)} \bp(x) \mu (\d x)} $$
        and 
        $$ \int_{[0, \infty)} \bp(x) \bell \mu(\d x) = \bra{\int_{[0, \infty)} \bp(x) \mu (\d x)} \bell. $$
    \end{lemma}
    
    \begin{proof}
        Obvious.
    \end{proof}

    \begin{lemma} \label{lem:commutation_sig_integral}
        Let $\bp \in L^1(\eTA[2], \mu)$ such that $\int_{[0, \infty)} \normA{\bp(x)} \mu(\d x) < \infty$ for every $t \in [0, T]$, then
        $$ \bracketsig{\int_{[0, \infty)} \bp(x) \mu(\d x)} = \int_{[0, \infty)} \bracketsig{\bp(x)} \mu(\d x). $$ 
    \end{lemma}
    
    \begin{proof}
        By assumption
        \begin{align}
            \int_{[0, \infty)} \normA{\bp(x)} \mu(\d x) &
            =\int_{[0,\infty)} \sum_{n=0}^\infty \abs{\sum_{\wv \in V_n} \bp^\wv(x) \sig^\wv} \mu(\d x)
            < \infty.
        \end{align}
        This implies that $\bp(x) \in \A$ $\mu$-a.e., so by the dominated convergence theorem
        \begin{align*}
            \bracketsig{\int_{[0, \infty)} \bp(x) \mu(\d x)} &
            = \sum_{n=0}^\infty \sum_{\wv \in V_n} \bra{\int_{[0, \infty)} \bp^\wv(x) \mu(\d x)} \sig^\wv
            \\ &
            = \int_{[0, \infty)} \sum_{n=0}^\infty \sum_{\wv \in V_n} \bp^\wv(x) \sig^\wv \mu(\d x)
            \\ &
            = \int_{[0, \infty)} \bracketsig{\bp(x)} \mu(\d x).
        \end{align*}
    \end{proof}

    We now introduce two important propositions, which establish the commutativity of the kernel integral and the bracket.

    \begin{proposition} \label{prop:combined_commute}
        Let $\bell \in \Ah$. Assume that $\mu$ is a measure on $[0, \infty)$ satisfying $\int_{[0, \infty)} x^n \mu(\d x) < M^n$ for every $n \in \N$ and a positive constant $M$. We then have for $\word{i} = \word{1}, \word{2}$
        \begin{align}
            \bracketsig{\bell \wi \bra{\int_{[0, \infty)} x \shuexp{-x \word{1}} \mu(\d x)}} &
            = \bracketsig{\int_{[0, \infty)} \bell \wi x \shuexp{-x \word{1}} \mu(\d x)} \label{eq:prop5.6_i}
            \\ &
            = \int_{[0, \infty)} \bracketsig{\bell \wi x \shuexp{-x \word{1}}} \mu(\d x). \label{eq:prop5.6_ii}
        \end{align}
    \end{proposition}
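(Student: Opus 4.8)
The plan is to derive the first equality \eqref{eq:prop5.6_i} from Lemma~\ref{lm:concat} and the second equality \eqref{eq:prop5.6_ii} from Lemma~\ref{lm:commutation_sig_integral}, so that the whole argument reduces to verifying the integrability hypotheses of these two lemmas. The tool used repeatedly will be the sub-multiplicativity of $\normh[T]{\cdot}$ from Proposition~\ref{prop:submultiplicativity_Sh}, together with the standing moment bound $\int_{[0,\infty)}x^n\mu(dx)<M^n$.

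I would first dispose of the elementary points. Since $\shuexp{-x\word{1}}=\res{-x\word{1}}=\sum_{n\geq0}(-x)^n\word{1}\conpow{n}$, for each fixed word $\wv$ of length $m$ the coordinate $\bra{\bell\wi x\shuexp{-x\word{1}}}^\wv$ is a polynomial in $x$ of degree at most $m$ whose coefficients are finite combinations of the $\bell^\wu$; hence $\int_{[0,\infty)}\abs{\bra{\bell\wi x\shuexp{-x\word{1}}}^\wv}\mu(dx)<\infty$ by the moment bound, so both $x\mapsto x\shuexp{-x\word{1}}$ and $x\mapsto\bell\wi x\shuexp{-x\word{1}}$ lie in $L^1(\eTA[2],\mu)$, and Lemma~\ref{lm:concat} gives $\int_{[0,\infty)}\bell\wi x\shuexp{-x\word{1}}\mu(dx)=\bell\wi\bra{\int_{[0,\infty)}x\shuexp{-x\word{1}}\mu(dx)}$ as elements of $\eTA[2]$. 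To turn this into \eqref{eq:prop5.6_i} I must know both sides lie in $\A$: writing $\int_{[0,\infty)}x\shuexp{-x\word{1}}\mu(dx)=\sum_{n\geq0}(-1)^n\bra{\int_{[0,\infty)}x^{n+1}\mu(dx)}\word{1}\conpow{n}$ and using $h_T(\word{1}\conpow{n})=\frac{C}{(n+1)^{1/4}}\frac{(2T)^n}{\sqrt{(2n-1)!}}$ one gets $\normh[T]{\int_{[0,\infty)}x\shuexp{-x\word{1}}\mu(dx)}\leq CM\sum_{n\geq0}\frac{(2MT)^n}{\sqrt{(2n-1)!}}<\infty$, so this element is in $\Sh$, and Proposition~\ref{prop:submultiplicativity_Sh} makes its concatenation with $\bell\wi$ again an element of $\Sh\subset\A$.

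For \eqref{eq:prop5.6_ii} I would apply Lemma~\ref{lm:commutation_sig_integral} to $\bp(x):=\bell\wi x\shuexp{-x\word{1}}$; besides $\bp\in L^1(\eTA[2],\mu)$, shown above, the lemma asks for $\int_{[0,\infty)}\normA[t]{\bp(x)}\mu(dx)<\infty$ a.s.\ for each $t\leq T$. This is the one genuine estimate. By Proposition~\ref{prop:submultiplicativity_Sh},
$$ \normh[T]{\bell\wi x\shuexp{-x\word{1}}}\leq x\,\normh[T]{\bell}\,h_T(\wi)\,\normh[T]{\shuexp{-x\word{1}}}, $$
and specializing the Stirling computation in the proof of Lemma~\ref{lem:shuexp_in_Sh} (to $\bm{a}=-x\word{1}$, $\bm{b}=0$) bounds $\normh[T]{\shuexp{-x\word{1}}}$ by $C(1+2Tx)e^{2Tx}$; hence $\normh[T]{\bp(x)}\leq C' x(1+2Tx)e^{2Tx}$ with $C'$ depending only on $T$, $\bell$ and $\wi$. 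Integrating against $\mu$, expanding the exponential and using $\int_{[0,\infty)}x^n\mu(dx)<M^n$ term by term gives $\int_{[0,\infty)}\normh[T]{\bp(x)}\mu(dx)<\infty$. Then, since $t\mapsto\normh{\cdot}$ is nondecreasing, the first item of Proposition~\ref{Prop:Sh} yields $\E\normA[t]{\bp(x)}\leq\E\sqbra{\sup_{s\leq t}\normA[s]{\bp(x)}}\leq\normh[T]{\bp(x)}$, so by Tonelli $\E\int_{[0,\infty)}\normA[t]{\bp(x)}\mu(dx)\leq\int_{[0,\infty)}\normh[T]{\bp(x)}\mu(dx)<\infty$ and the inner integral is finite a.s.; Lemma~\ref{lm:commutation_sig_integral} now applies and delivers \eqref{eq:prop5.6_ii}. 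The main obstacle is exactly this uniform-in-$x$ $\Sh$-estimate for $\bell\wi x\shuexp{-x\word{1}}$ and its $\mu$-integrability; the remainder is bookkeeping with Definition~\ref{D:integralsig} and the two commutation lemmas, and the argument does not distinguish between $\wi=\word{1}$ and $\wi=\word{2}$.
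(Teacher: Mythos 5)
Your proof is correct and follows essentially the same route as the paper's: Lemma~\ref{lm:concat} together with the $\Sh$-membership of $\bell\wi\bra{\int_{[0,\infty)}x\shuexp{-x\word{1}}\mu(dx)}$ (via Proposition~\ref{prop:submultiplicativity_Sh}) for \eqref{eq:prop5.6_i}, and Lemma~\ref{lm:commutation_sig_integral} with the same sub-multiplicative estimate $\normh[T]{\bell\wi x\shuexp{-x\word{1}}}\leq C'x(1+2Tx)e^{2Tx}$ integrated against $\mu$ for \eqref{eq:prop5.6_ii}. If anything, you are slightly more careful than the paper at the final step, passing from the deterministic bound on $\normh[T]{\bp(x)}$ to the a.s.\ finiteness of $\int_{[0,\infty)}\normA{\bp(x)}\mu(dx)$ via expectation and Tonelli, rather than treating $\normA{\cdot}\leq\normh{\cdot}$ as a pointwise inequality.
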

    
    \begin{proof}
        We start by proving the first equality \eqref{eq:prop5.6_i}. Fix $\word{i} \in \set{\word{1}, \word{2}}$. By assumption of $\mu$, it is easy to show that $\int_{[0, \infty)} x \shuexp{-x \word{1}} \mu(\d x) \in \Ah$ by direct computation. In particular $x \shuexp{-x \word{1}} \in L^1(\A, \mu)$. A direct application of Lemma~\ref{lem:concat} shows that
        $$ \bell \wi \bra{\int_{[0, \infty)} x \shuexp{-x \word{1}} \mu(\d x)} = \int_{[0, \infty)} \bell \wi x \shuexp{-x \word{1}} \mu(\d x) \in \eTA[2]. $$ 
        
        Moreover, Proposition~\ref{prop:submultiplicativity_Sh} ensures that it actually belongs to $\Ah$ so that the bracket is well-defined, which proves the first equality. \\
        
        For the second equality \eqref{eq:prop5.6_ii}, it suffices to verify that $\int_{[0, \infty)} \normA{\bell \wi x \shuexp{-x \word{1}}} \mu(\d x) < \infty$ for every $t \in [0, T]$: 
        \begin{align*}
            \int_{[0, \infty)} \normA{\bell \wi x \shuexp{-x \word{1}}} \mu(\d x) &
            \leq \int_{[0, \infty)} \normh{\bell \wi x \shuexp{-x \word{1}}} \mu(\d x)
            \leq \int_{[0, \infty)} \normh{\bell} \normh{\wi} \normh{x \shuexp{-x \word{1}}} \mu(\d x),
        \end{align*}
        where the second equality comes from Proposition~\ref{prop:submultiplicativity_Sh}. It then follows
        \begin{align*}
            \int_{[0, \infty)} \normA{\bell \wi x \shuexp{-x \word{1}}} \mu(\d x) &
            \leq \normh{\bell} \normh{\wi} \int_{[0, \infty)} \normh{x \shuexp{-x \word{1}}} \mu(\d x)
            \\ & 
            \leq C \normh{\bell} \normh{\wi} \int_{[0, \infty)} \bra{\sum_{n=0}^\infty x^{n+1} (n+1)^{-\frac{1}{4}} \frac{(2t)^n}{\sqrt{(2n-1)!}}} \mu(\d x)
            \\ &
            \leq C
            \normh{\bell} \normh{\wi} \sum_{n=0}^\infty M^{n+1} (n+1)^{-\frac{1}{4}} \frac{(2t)^n}{\sqrt{(2n-1)!}}
             \\ &
            \leq C
            \normh{\bell} \normh{\wi} M \bra{\sum_{n=1}^\infty M^{n}  \frac{(2t)^n}{(n-1)!} + 1}
            \\ &
            = C \normh{\bell} \normh{\wi} M \bra{2 t M e^{2 t M} + 1}
            \\ &
            < \infty.
        \end{align*}
        
        An application of Lemma~\ref{lem:commutation_sig_integral} completes the proof.
    \end{proof}

    \begin{proposition} \label{prop:prop6.5}
        $\mu$ is a measure on $[0, \infty)$ satisfying $\int_{[0, \infty)} x^n \mu(\d x) < M^n$ for every $n \in \N$ and a positive constant $M$, then
        \begin{align}
            \int_{[0, \infty)} \bracketsig{\lvol \word{1} x \shuexp{-x \word{1}}} \mu(\d x) &
            = - \int_0^t J'(t-s) \bar{Y}_s \d s \label{eq:prop_5.7_eq1}
        \intertext{and}
            \int_{[0, \infty)} \bracketsig{\lvol \word{2} x \shuexp{-x \word{1}}} \mu(\d x) &
            = - \int_0^t J'(t-s) \bar{Y}_s \d W_s
            - \half K_2(0) \int_0^t J'(t-s) (a_2 + b_2 \bar{Y}_s) \d s \label{eq:prop_5.7_eq2}
        \end{align}
        where $J(t) = \int_{[0, \infty)]} e^{-xt} \mu(\d x)$.
    \end{proposition}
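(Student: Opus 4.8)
The plan is to evaluate $\bracketsig{\lvol\wi x\shuexp{-x\word{1}}}$ explicitly for each fixed $x\ge 0$ and each $\wi\in\set{\word{1},\word{2}}$, and then integrate the resulting two identities against $\mu(dx)$, pushing the $\mu$-integral through the inner time-integrals. The interchange of the $\mu$-integral with the bracket itself is already granted by Proposition~\ref{prop:combined_commute}, which in particular shows that $x\mapsto\lvol\wi x\shuexp{-x\word{1}}$ lies in $L^1(\A,\mu)$, so both left-hand sides of \eqref{eq:prop_5.7_eq1}--\eqref{eq:prop_5.7_eq2} are well defined; what remains is the explicit computation.

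First I would unwind the algebra. Lemma~\ref{lem:exponential-identity} with $\bm{b}=-x\word{1}$ gives, for $\wi\in\set{\word{1},\word{2}}$,
$$\lvol\wi x\shuexp{-x\word{1}} = \bra{x\shuexp{-x\word{1}}}\shuprod\bra{\bra{\shuexp{x\word{1}}\shuprod\lvol}\wi}.$$
Since $\shuexp{\pm x\word{1}}\in\Sh$ (Lemma~\ref{lem:shuexp_in_Sh}) and $\lvol\in\Sh$ (Lemma~\ref{L:volterra-in-I}), sub-multiplicativity of $\norm{\cdot}^\Sh$ (Proposition~\ref{prop:submultiplicativity_Sh}) makes every factor above an element of $\Sh\subset\A$, so the extended shuffle property (Proposition~\ref{prop:shuffle_property}) yields $\bracketsig{\lvol\wi x\shuexp{-x\word{1}}} = x\,\bracketsig{\shuexp{-x\word{1}}}\cdot\bracketsig{\bra{\shuexp{x\word{1}}\shuprod\lvol}\wi}$. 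Because $\bracketsig[s]{\shuexp{\pm x\word{1}}}=\sum_{n\ge 0}(\pm x)^n s^n/n!=e^{\pm xs}$, the first factor is $xe^{-xt}$. For the second factor, set $\bell:=\shuexp{x\word{1}}\shuprod\lvol\in\Sh$; the Leibniz-type rule $(\bp\shuprod\bq)\proj{i}=(\bp\proj{i})\shuprod\bq+\bp\shuprod(\bq\proj{i})$ (a short induction on word length from Definition~\ref{def:shuffleprod}), together with $\shuexp{x\word{1}}\proj{2}=0$, gives $\bell\proj{2}=\shuexp{x\word{1}}\shuprod\bra{\lvol\proj{2}}\in\Sh$, while $\lvol\proj{2}=K_2(0)\bra{a_2\emptyword+b_2\lvol}$ by \eqref{eq:lvol_proj_2}. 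As $\bell$, $\bell\word{1}$, $\bell\word{2}$, $\bell\proj{2}$ all lie in $\Sh$, Proposition~\ref{Prop:Sh} gives $\sup_{t\le T}\bra{\normA{\bell}+\normA{\bell\proj{2}}}<\infty$ a.s., so the hypotheses of Lemma~\ref{thm:sig_semimartingale} hold for $\bell$ and both letters (recall $\mathcal{Q}_\word{1}(\widehat{W})=\emptyset$, $\mathcal{Q}_\word{2}(\widehat{W})=\set{\word{2}}$, $\widehat{W}^{\word{1}}_s=s$, $\widehat{W}^{\word{2}}=W$). That lemma produces $\bracketsig{\bell\word{1}}=\int_0^t\bracketsig[s]{\bell}\,ds$ and $\bracketsig{\bell\word{2}}=\int_0^t\bracketsig[s]{\bell}\,dW_s+\thalf\int_0^t\bracketsig[s]{\bell\proj{2}}\,ds$, while the extended shuffle property and \eqref{eq:lvol_proj_2} identify $\bracketsig[s]{\bell}=e^{xs}\bar{Y}_s$ and $\bracketsig[s]{\bell\proj{2}}=e^{xs}K_2(0)\bra{a_2+b_2\bar{Y}_s}$. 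Collecting terms,
$$\bracketsig{\lvol\word{1} x\shuexp{-x\word{1}}}=xe^{-xt}\int_0^t e^{xs}\bar{Y}_s\,ds,$$
$$\bracketsig{\lvol\word{2} x\shuexp{-x\word{1}}}=xe^{-xt}\bra{\int_0^t e^{xs}\bar{Y}_s\,dW_s+\frac{K_2(0)}{2}\int_0^t e^{xs}\bra{a_2+b_2\bar{Y}_s}\,ds}.$$

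Finally I would integrate these identities against $\mu(dx)$. Since $\int_{[0,\infty)}x\,\mu(dx)<M$, one may differentiate $J(u)=\int_{[0,\infty)}e^{-xu}\mu(dx)$ under the integral sign to get $J'(u)=-\int_{[0,\infty)}xe^{-xu}\mu(dx)$; hence, after the $\mu$-integral is pushed inside the inner time-integrals, the claimed formulas \eqref{eq:prop_5.7_eq1}--\eqref{eq:prop_5.7_eq2} drop out. For the $ds$-terms this interchange is a pathwise Fubini argument, trivially justified by $\int_{[0,\infty)}xe^{-x(t-s)}\mu(dx)\le M$ and continuity of $\bar{Y}$. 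The one step requiring genuine care, and the main obstacle, is the interchange for the $dW$-term, a stochastic Fubini argument whose hypothesis, of the form $\int_{[0,\infty)}\bra{\E\int_0^t x^2e^{-2x(t-s)}\bar{Y}_s^2\,ds}^{1/2}\mu(dx)<\infty$, is checked using $xe^{-x(t-s)}\le x$, $\int_{[0,\infty)}x\,\mu(dx)<M$, and the moment bound $\E\sqbra{\sup_{t\le T}\bar{Y}_t^2}\le\bra{\normh[T]{\lvol}}^2<\infty$ from Proposition~\ref{Prop:Sh}. Everything else is routine bookkeeping with the $\Sh$-estimates.
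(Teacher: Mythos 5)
Your proposal is correct and follows essentially the same route as the paper: both use Lemma~\ref{lem:exponential-identity} to commute $\shuexp{-x\word{1}}$ past the concatenation, the (extended) shuffle property to factor the bracket, Lemma~\ref{thm:sig_semimartingale} together with \eqref{eq:lvol_proj_2} to obtain the explicit $ds$/$dW_s$ integrals $x\int_0^t e^{-x(t-s)}\bar Y_s\,ds$ and $x\int_0^t e^{-x(t-s)}\bar Y_s\,dW_s+\thalf K_2(0)x\int_0^t e^{-x(t-s)}(a_2+b_2\bar Y_s)\,ds$, and then deterministic and stochastic Fubini (justified via $\sup_{t\le T}\E[\bar Y_t^2]\le(\normh[T]{\lvol})^2$ from Proposition~\ref{Prop:Sh}) to pull the $\mu$-integral inside and identify $-J'$. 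The only cosmetic differences are your explicit Leibniz rule for $(\shuexp{x\word{1}}\shuprod\lvol)\proj{2}$ and a slightly different but equivalent form of the stochastic Fubini integrability condition.
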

    
    \begin{proof}
        By Lemma~\ref{lem:exponential-identity} and the shuffle property, we derive
        \begin{align}\label{eq:fubini_1}
            \bracketsig{\lvol \word{1} x \shuexp{-x \word{1}}} 
            &
            = x \bracketsig{\shuexp{-x \word{1}}} \int_0^t \bracketsig[s]{\shuexp{x \word{1}} \shuprod \lvol} \d s
            = x \int_0^t e^{-x(t-s)} \bar{Y}_s \d s,
        \end{align}
        
        Similarly, using Lemma~\ref{lem:exponential-identity} and the shuffle property,
        \begin{align}\label{eq:fubini_2}
            \bracketsig{\lvol \word{2} x \shuexp{-x \word{1}}} &
            = x \bracketsig{\shuexp{-x \word{1}}} \bracketsig{\bra{\shuexp{x \word{1}} \shuprod \lvol} \word{2}}
            \\ &
            = x \bracketsig{\shuexp{-x \word{1}}} \int_0^t \bracketsig[s]{\shuexp{x \word{1}} \shuprod \lvol} \d W_s
            \\ &
            \quad + \half x \bracketsig{\shuexp{-x \word{1}}} \int_0^t \bracketsig[s]{\shuexp{x \word{1}} \shuprod \lvol \proj{2}} \d s
            \\ &
            = x \int_0^t e^{-x(t-s)} \bar{Y}_s \d W_s + \half K_2(0) x \int_0^t e^{-x(t-s)} (a_2 + b_2 \bar{Y}_t) \d s,
        \end{align}
        where the second and third equalities come from Theorem~\ref{thm:sig_semimartingale} and \eqref{eq:lvol_proj_2} respectively.
        
        We will now show that we can apply the deterministic and stochastic Fubini theorem. First, noticing that for all $t \in [0, T]$,
        $$ \E \abs{\bar{Y}_t} = \E \abs{\bracketsig{\lvol}} \leq \normh{\lvol}, $$
        and thus
        $$ \sup_{t \in [0, T]} \E \abs{\bar{Y}_t} < \infty. $$
        
        We can now apply Tonelli's theorem:
        \begin{align*}
            \E \sqbra{\int_{[0, \infty)} \int_0^t e^{-x(t-s)} \abs{\bar{Y}_s} \d s ~\mu(\d x)} &
            = \int_{[0, \infty)} \int_0^t e^{-x(t-s)} \E \abs{\bar{Y}_s} \d s ~\mu(\d x)
            \\ &
            \leq \int_{[0, \infty)} \int_0^t \E \abs{\bar{Y}_s} \d s ~\mu(\d x)
            \\ &
            \leq \mu([0, \infty)) \int_0^t \E \abs{\bar{Y}_s} \d s
            \\ &
            \leq \mu([0, \infty)) ~ T \sup_{t \in [0, T]} \E \abs{\bar{Y}_t}
            < \infty.
        \end{align*}
        
        This leads to
        \begin{equation} \label{eq:fubini_condition1}
            \int_{[0, \infty)} \int_0^t e^{-x(t-s)} \abs{\bar{Y}_s} \d s ~\mu(\d x) < \infty \textnormal{ a.s.}.
        \end{equation} 

        Now we can apply Fubini theorem on equation \eqref{eq:fubini_1} and derive:
        $$ \int_{[0, \infty)} \bracketsig{\lvol \word{1} \shuexp{-x \word{1}}} \mu(\d x) = \int_0^t \bra{\int_{[0, \infty)} x e^{-x(t-s)} \mu(\d x)} \bar{Y}_s \d s = -\int_0^t J'(t-s) \bar{Y}_s \d s, $$
        thus proving \eqref{eq:prop_5.7_eq1}. On the other hand, by Proposition~\ref{prop:Ah}~\ref{lem:to_verify_stochastic_fubini}, 
        $$ \sup_{t \in [0, T]} \E \sqbra{\bar{Y}_t^2} \leq \bra{\normh[T]{\lvol}}^2 < \infty $$
        and
        \begin{align*}
            \E \sqbra{\int_{[0, \infty)} \sqrt{\int_0^t e^{-2x(t-s)} \abs{\bar{Y}_s}^2 \d s} ~\mu(\d x)} &
            \leq \mu([0, \infty)) \E \sqbra{\sqrt{\int_0^t \abs{\bar{Y}_s}^2 \d s}}
            \\ &
            \leq \mu([0, \infty)) \sqrt{\E \sqbra{\int_0^t \abs{\bar{Y}_s}^2 \d s}}
            \\ &
            \leq \mu([0, \infty)) \sqrt{T \sup_{t \in [0, T]} \E \abs{\bar{Y}_t}^2}
            < \infty,
        \end{align*}
        
        which leads to
        \begin{equation} \label{eq:fubini_condition2}
            \int_{[0, \infty)} \sqrt{\int_0^t e^{-2x(t-s)} \abs{\bar{Y}_s}^2 \d s} ~\mu(\d x) < \infty \textnormal{ a.s.}
        \end{equation}
        
        Note that \eqref{eq:fubini_condition2} is actually the condition for stochastic Fubini theorem to hold, see \cite[Theorem 2.2]{veraar2012stochastic}. Applying it, we derive
        \begin{align*}
            \int_{[0,\infty)} \bracketsig{\lvol \word{2} x \shuexp{-x \word{1}}} \mu(\d x) &
            = -\int_0^t J'(t-s) \bar{Y}_s \d W_s - \half K_2(0) \int_0^t J'(t-s) (a_2 + b_2 \bar{Y}_s) \d s,
        \end{align*}
        thus completing the proof.
    \end{proof}

    Now, taking $\mu = \mu_1, \mu_2$, we combine Proposition~\ref{prop:combined_commute} with Proposition~\ref{prop:prop6.5} and derive, for $i = 1, 2$:
    \begin{align} \label{eq:for_the_proof_of_lemma_5.2}
        \bracketsig{\lvol \word{1} \int_{[0, \infty)} x \shuexp{-x \word{1}} \mu_i(\d x)} & = -\int_0^t K_i'(t-s) \bar{Y}_s \d s, \\
        \label{eq:for_the_proof_of_lemma_5.2_2}\bracketsig{\lvol \word{2} \int_{[0, \infty)} x \shuexp{-x \word{1}} \mu_i(\d x)} &= -\int_0^t K_i'(t-s) \bar{Y}_s \d W_s - \half K_2(0) \int_0^t K_i'(t-s) (a_2 + b_2 \bar{Y}_s) \d s. \quad 
    \end{align}
    
    Finally, we can now compute the projections $\lvol \proj{1}, \lvol \proj{2}, \lvol \proj{22}$. First recall
    \begin{align}
        \pvol &
        = a_1 \word{1} \int_{[0, \infty)} \shuexp{-x \word{1}} \mu_1(\d x) + a_2 \bra{\word{2} - \half K_2(0) b_2 \word{1}} \int_{[0, \infty)} \shuexp{-x \word{1}} \mu_2(\d x) + y \emptyword,
        \\ \qvol &
        = b_1 \word{1} \int_{[0, \infty)} \shuexp{-x \word{1}} \mu_1({\d x}) + b_2 \bra{\word{2} - \half K_2(0) b_2 \word{1}} \int_{[0, \infty)} \shuexp{-x \word{1}} \mu_2(\d x).
    \end{align}
    We then have 
    \begin{align} \label{eq:projpqvol}
        \begin{split}
            \pvol \proj{1} &
            = a_1 \int_{[0, \infty)} \shuexp{-x \word{1}} \mu_1(\d x) - a_2 \int_{[0, \infty)} \bra{x \word{2} + \half K_2(0) b_2 \emptyword} \shuexp{-x \word{1}} \mu_2(\d x),
            \\ \qvol \proj{1} &
            = b_1 \int_{[0, \infty)} \shuexp{-x \word{1}} \mu_1(\d x) - b_2 \int_{[0, \infty)} \bra{x \word{2} + \half K_2(0) b_2 \emptyword} \shuexp{-x \word{1}} \mu_2(\d x),
            \\ \pvol \proj{2} &=\int_{[0,\infty)} \mu_2(dx) a_2 \emptyword 
            = K_2(0) a_2 \emptyword,
            \qquad
            \pvol \proj{22} = 0,
            \\ \qvol \proj{2} &= \int_{[0,\infty)} \mu_2(dx) b_2 \emptyword 
            = K_2(0) b_2 \emptyword,
            \qquad
            \qvol \proj{22} = 0.
        \end{split}
    \end{align}
       
    Then, using Lemma~\ref{lem:proj-resolvent} and noticing $(\qvol)^\emptyword = 0$, it immediately follows that
    \begin{align}
        \lvol \proj{2} &
        = \pvol \proj{2} + \lvol \bra{\qvol \proj{2}} = K_2(0) (a_2 \emptyword + b_2 \lvol), \label{eq:lvol_proj_2}
        \\ \lvol \proj{22} &
        = K_2(0) b_2 \lvol \proj{2} = K_2^2(0) b_2 (a_2 \emptyword + b_2 \lvol), \label{eq:lvol_proj_22}
    \end{align}
    and that
    \begin{align}
        \lvol \proj{1} &
        = \pvol \proj{1} + \lvol \bra{\qvol \proj{1}}
        \\ &
        = \bra{a_1 \emptyword + b_1 \lvol} \int_{[0, \infty)} \shuexp{-x \word{1}} \mu_1(\d x)
        - \bra{a_2 \emptyword + b_2 \lvol} \int_{[0, \infty)} \bra{x \word{2} + \half K_2(0) b_2 \emptyword} \shuexp{-x \word{1}} \mu_2(\d x)
        \\ &
        = K_1(0) \bra{a_1 \emptyword + b_1 \lvol} - \half K_2^2(0) b_2 \bra{a_2 \emptyword + b_2 \lvol} \label{eq:lvol_proj_1}
        \\ &
        \quad + \bra{a_1 \emptyword + b_1 \lvol} \word{1} \int_{[0, \infty)} -x \shuexp{-x \word{1}} \mu_1(\d x)
        \\ &
        \quad + \bra{a_2 \emptyword + b_2 \lvol} \bra{\word{2} - \half K_2(0) b_2 \word{1}} \int_{[0, \infty)} -x \shuexp{-x \word{1}} \mu_2(\d x).
    \end{align}
    
    \begin{proof}[Proof of Lemma~\ref{lem:lvolbrackets}]
        Following directly from \eqref{eq:lvol_proj_2}-\eqref{eq:lvol_proj_22}, we get
        \begin{align}
            \bracketsig{\lvol \proj{2}} &
            = K_2(0) (a_2 + b_2 \bar{Y}_t)
            \quad \text{and} \quad \bracketsig{\lvol \proj{22}} 
            = K_2^2(0) b_2 (a_2 + b_2 \bar{Y}_t).
        \end{align}
        
        It now only remains to compute $\bracketsig{\lvol \proj{1}}$. For this, recall \eqref{eq:lvol_proj_1}
        \begin{align}
            \lvol \proj{1} &
            = K_1(0) \bra{a_1 \emptyword + b_1 \lvol} - \half K_2^2(0) b_2 \bra{a_2 \emptyword + b_2 \lvol}
            \\ &
            \quad + \bra{a_1 \emptyword + b_1 \lvol} \word{1} \int_{[0, \infty)} -x \shuexp{-x \word{1}} \mu_1(\d x)
            \\ &
            \quad + \bra{a_2 \emptyword + b_2 \lvol} \bra{\word{2} - \half K_2(0) b_2 \word{1}} \int_{[0, \infty)} -x \shuexp{-x \word{1}} \mu_2(\d x).
        \end{align}
        A direct application of equation \eqref{eq:for_the_proof_of_lemma_5.2} and \eqref{eq:for_the_proof_of_lemma_5.2_2} yields:
        \begin{align}
            \bracketsig{\lvol \proj{1}} &
            = K_1(0)\bra{a_1 + b_1 \bar{Y}_t} - \half K_2^2(0) b_2 (a_2 + b_2 \bar{Y}_t)
            \\ &
            \quad + \int_0^t K_1'(t-s) \bra{a_1 + b_1 \bar{Y}_s} \d s
            + \int_0^t K_2'(t-s) \bra{a_2 + b_2 \bar{Y}_s} \d W_s, 
        \end{align}
        which concludes the proof of Lemma~\ref{lem:lvolbrackets} and thus of Theorem~\ref{thm:volterra}.
    \end{proof}

\section{Proof of Theorem \ref{thm:delayed}} \label{app:delayed}
    {\revtwo
    It is straightforward to see that $\pde \in \Aexp$ as 
    $$ \pde = \bra{a_1 - \thalf b_2 a_2} \word{1} + a_2 \word{2} + z \emptyword. $$
    Using Proposition~\ref{prop:Aexp_closed}~\ref{prop:Aexp_closed_concat}-\ref{prop:Aexp_closed_linear} we can easily see $\qde \in \Aexp$ as
    $$ \qde = \bra{\bra{b_1 - \thalf b_2^2} \emptyword + \sum_{m=1}^{k_1} c_1^m \word{1} \shuexp{\alpha_1^m \word{1}} - \thalf b_2 \sum_{m=1}^{k_2} c_2^m \word{1} \shuexp{\alpha_2^m \word{1}}} \word{1} + \bra{b_2 \emptyword + \sum_{m=1}^{k_2} c_2^m \word{1} \shuexp{\alpha_2^m \word{1}}} \word{2}. $$
    Then applying Corollary~\ref{coro:pq_Aexp_implies_ell_Aexp} we have $\lde \in \Aexp$ and in particular $\lde \in \I(\widehat{W})$.
    }
    Now recalling Lemma~\ref{lem:proj-resolvent}, i.e. for all $\bp, \bq \in \eTA$ such that $\bq^\emptyword = 0$, 
    \begin{align}
        \bra{\bp \inverse{\emptyword - \bq}} \proj{i} &
        = \bp \proj{i} + \bp \inverse{\emptyword - \bq} (\bq \proj{i}),
    \end{align}

    it directly follows that    
    \begin{align} \label{eq:de_proj}
        \lde \proj{1} &
        = \lde \bra{\bra{b_1 - \thalf b_2^2} \emptyword + \sum_{m=1}^{k_1} c_1^m \word{1} \shuexp{\alpha_1^m \word{1}} - \thalf b_2 \sum_{m=1}^{k_2} c_2^m \word{1} \shuexp{\alpha_2^m \word{1}}} + \bra{a_1 - \thalf b_2 a_2} \emptyword,
        \\ \lde \proj{2} &
        = \lde \bra{b_2 \emptyword + \sum_{m=1}^{k_2} c_2^m \word{1} \shuexp{\alpha_2^m \word{1}}} + a_2 \emptyword,
        \\ \lde \proj{22} &
        = b_2 \lde \proj{2}.
    \intertext{Finally,}
        \lde \proj{1} + \thalf \lde \proj{22} &
        = \lde \bra{b_1 \emptyword + \sum_{m=1}^{k_1} c_1^m \word{1} \shuexp{\alpha_1^m \word{1}}} + a_1 \emptyword.
    \end{align}
    
    Now ready to apply Theorem~\ref{thm:sig_ito_formula} on $\lde$, we get
    \begin{align}
        \bracketsig{\lde} &
        = \bra{\lde}^\emptyword + \int_0^t \bracketsig[s]{\lde \proj{1} + \thalf \lde \proj{22}} \d s + \int_0^t \bracketsig[s]{\lde \proj{2}} \d W_s
        \\ &
        = z + \int_0^t \bracketsig[s]{\lde \bra{b_1 \emptyword + \sum_{m=1}^{k_1} c_1^m \word{1} \shuexp{\alpha_1^m \word{1}}} + a_1 \emptyword} \d s
        \\ &
        \qquad + \int_0^t \bracketsig[s]{\lde \bra{b_2 \emptyword + \sum_{m=1}^{k_2} c_2^m \word{1} \shuexp{\alpha_2^m \word{1}}} + a_2 \emptyword} \d W_s
        \\ &
        = z + \int_0^t \bra{a_1 + b_1 \bracketsig[s]{\lde} + \sum_{m=1}^{k_1} c_1^m \bracketsig[s]{\lde \word{1} \shuexp{\alpha_1^m \word{1}}}} \d s
        \\ &
        \qquad + \int_0^t \bra{a_2 + b_2 \bracketsig[s]{\lde} + \sum_{m=1}^{k_2} c_2^m \bracketsig[s]{\lde \word{1} \shuexp{\alpha_2^m \word{1}}}} \d W_s.
    \end{align}

    Finally, using Lemma~\ref{lem:exponential-identity}, i.e. $\bell \word{i} \shuexp{\gamma \word{j}} = \shuexp{\gamma \word{j}} \shuprod \bra{\bra{\shuexp{-\gamma \word{j}} \shuprod \bell} \word{i}}$ for all $\gamma \in \R$ and $\word{i}, \word{j} \in \alphabet$, and the fact that $\lde \word{1} \shuexp{\alpha_i^m \word{1}} \in \A$ for all $m$ and $i = 1, 2$, it is then straightforward to see that
    \begin{align}
        \bracketsig{\lde \word{1} \shuexp{\alpha_i^m \word{1}}} &
        = \int_0^t e^{\alpha_i^m (t-s)} \bracketsig[s]{\lde} \d s.
    \end{align}
    This concludes the proof of Theorem \ref{thm:delayed} that $\bracketsig{\lde}$ is solution to \eqref{eq:sde-DE-exp} and hence $Z_t=\bracketsig{\lde}$ by strong uniqueness.

\section{Proof of Theorem \ref{thm:gaussian}} \label{app:gaussian}
    We first illustrate that $\bracketsig{\lgv}$ with $\lgv$ defined in \eqref{eq:lgv} is a well-defined stochastic process.
    \begin{align}
        \E \bigg[\int_0^T \sum_{n=0}^\infty \abs{K^{(n)}(t) \bracketsig{\word{1} \conpow{n} \word{2}}} \d t \bigg] &=  \int_0^T \sum_{n=0}^\infty \abs{K^{(n)}(t)} \E \abs{\bracketsig{\word{1} \conpow{n} \word{2}}} \d t \\
        &= \int_0^T\sum_{n=0}^\infty \abs{K^{(n)}(t)} \E \abs{\int_0^t \frac{s^n}{n!} dW_s} \d t
        \\ &
        \leq \int_0^T \sum_{n=0}^\infty \abs{K^{(n)}(t)} \frac{t^{n + \half}}{n! \sqrt{2n + 1}} ~\d t
    \end{align}

    The last inequality is derived from Jensen inequality. Then by assumption we know that 
    $$ \E \bigg[\int_0^T \sum_{n=0}^\infty \abs{K^{(n)}(t) \bracketsig{\word{1} \conpow{n} \word{2}}} \d t \bigg] <\infty, $$
    thus
    $$ \sum_{n=0}^\infty \abs{K^{(n)}(t)\bracketsig{\word{1} \conpow{n} \word{2}}}<\infty, ~\P(\d \omega)\otimes \d t-a.e. $$
    
    Consequently,
    $\sum_{n=0}^\infty K^{(n)}(t)\bracketsig{\word{1} \conpow{n} \word{2}}$ converges $\P(\d \omega)\otimes \d t-a.e.$, ensuring $\bracketsig{\lgv}$ to be a $\P(\d \omega)\otimes \d t-a.e.$-well-defined map. Notice that for every $N\in\N$, $\sum_{n=0}^N K^{(n)}(t)\bracketsig{\word{1} \conpow{n} \word{2}}$ is progressively measurable, so $\bracketsig{\lgv}$ is a also progressively measurable. It remains to prove that for every fixed $t$,
    \begin{align} \label{eq:proofgvequality}
        \bracketsig{\lgv} = \int_0^t K(t-s) \d W_s.
    \end{align}

    For this we write 
    \begin{align*}
        \bracketsig{\lgv} &
        =\sum_{n=0}^\infty K^{(n)}(t) (-1)^n \bracketsig{\word{1} \conpow{n} \word{2}}
        = \sum_{n=0}^{\infty}K^{(n)}(t) \int_{0}^t \frac{(-s)^n}{n!} \d W_s.
    \end{align*}
    
    Since $\int_0^t \left( \sum_{n=0}^\infty \abs{K^{(n)}(t)} \frac{s^n}{n!} \right)^2 \d s < \infty$ by assumption \eqref{eq:assKgaussian2}, the dominated convergence theorem of stochastic integrals yields
    
    $$ \sum_{n=0}^N K^{(n)}(t) \int_0^t \frac{(-s)^n}{n!} \d W_s \longrightarrow \int_0^t \sum_{n=0}^\infty K^{(n)}(t) \frac{(-s)^n}{n!} \d W_s = \int_0^t K(t-s) \d W_s, $$
    
    in probability, as $N \to \infty$. While the previous analysis ensures that
    
    $$ \sum_{n=0}^N K^{(n)}(t) \int_0^t \frac{(-s)^n}{n!} \d W_s \overset{\text{a.s.}}{\longrightarrow} \sum_{n=0}^\infty K^{(n)}(t) \int_0^t \frac{(-s)^n}{n!} \d W_s = \bracketsig{\lgv}, $$
    which proves \eqref{eq:proofgvequality} and completes the proof.

\appendix
\section{Properties of the resolvent} \label{sec:pf_control_of_l}

    We start by deriving a transformation formula, which is simple but crucial to simplify the algebraic expressions.

    \begin{lemma} \label{lem:exponential-identity}
        Let $\bell \in \eTA$, $\word{i} \in \alphabet$ and $\bm{b} := \sum_{\word{j} \in \alphabet} \bm{b}^\word{j} \word{j}$ for $\bm{b}^\word{j} \in \R$, then
        $$ \bell \word{i} \shuexp{\bm{b}} = \shuexp{\bm{b}} \shuprod \left( (\shuexp{-\bm{b}} \shuprod \bell) \word{i} \right). $$
    \end{lemma}
     
    \begin{proof}
        Let $\bgamma := \bell \word{i} \shuexp{\bm{b}} - \shuexp{\bm{b}} \shuprod \bra{\bra{\shuexp{-\bm{b}} \shuprod \bell} \word{i}}$, then
        \begin{align*}
            \bgamma &
            = \bell \word{i} \bra{\emptyword + \shuexp{\bm{b}} \bm{b}} - \bra{\emptyword + \shuexp{\bm{b}} \bm{b}} \shuprod \bra{\bra{\shuexp{-\bm{b}} \shuprod \bell} \word{i}}
            \\ &
            = \bell \word{i} + \bell \word{i} \shuexp{\bm{b}} \bm{b}
            - \sqbra{\emptyword \shuprod \bra{\shuexp{-\bm{b}} \shuprod \bell}} \word{i}
            - \sqbra{(\shuexp{\bm{b}} \bm{b}) \shuprod \bra{\shuexp{-\bm{b}} \shuprod \bell}} \word{i}
            - \sqbra{\shuexp{\bm{b}} \shuprod {\bra{\shuexp{-\bm{b}} \shuprod \bell} \word{i}}} \bm{b}
            \\ &
            = \sqbra{\bell - \bra{\shuexp{\bm{b}} \shuprod \shuexp{-\bm{b}} \shuprod \bell}} \word{i}
            + \sqbra{\bell \word{i} \shuexp{\bm{b}} - \shuexp{\bm{b}} \shuprod \bra{\bra{\shuexp{-\bm{b}} \shuprod \bell} \word{i}}} \bm{b}
            \\ &
            = 0 + \bgamma \bm{b},
        \end{align*}
        implying that $\bgamma = 0$.
    \end{proof}

    We then provide the useful decomposition of the unique solution of the linear equation $\bell = \bp + \bell \bq$.

    \begin{lemma} \label{lem:proj-resolvent}
        Let $\bp, \bq \in \eTA$ such that $\bq^\emptyword \neq 1$ and $\word{i} \in \alphabet$. Then
        $$ \bra{\bp \res{\bq}} \proj{i} = \frac{1}{1 - \bq^\emptyword} \sqbra{\bp \proj{i} + \bp \res{\bq} (\bq \proj{i})}. $$
    \end{lemma}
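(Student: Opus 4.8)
The plan is to first derive a convenient closed form for the projection of the resolvent $\res{\bq}\proj{i}$ on its own, and then feed in the concatenation $\bp$ on the left using the elementary identity that concatenating a word $\bp$ on the left commutes with the right-projection in a controlled way. Recall from Remark~\ref{rem:proj-decomposition} that for any $\bm{m}\in\eTA$ one has the decomposition $\bm{m}=\bm{m}^\emptyword\emptyword+\sum_{\word{j}\in\alphabet}\bm{m}\proj{j}\word{j}$; applying the projection operator $\cdot\proj{i}$, which satisfies $(\bm{m}\word{j})\proj{i}=\bm{m}\,\delta_{ij}$ and $(\bm{m}+\bm{n})\proj{i}=\bm{m}\proj{i}+\bm{n}\proj{i}$ together with the Leibniz-type rule $(\bm{m}\bm{n})\proj{i}=\bm{m}(\bm{n}\proj{i})+\bm{m}^\emptyword\cdot(\text{lower order correction})$ — more precisely $(\bm{m}\bm{n})\proj{i}=\bm{m}\,(\bm{n}\proj{i})$ whenever $\bm{n}$ has no $\emptyword$-component contribution — I will work directly from the resolvent relation.

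Concretely, the first step is to exploit the fixed-point characterization of the resolvent from Remark~\ref{rmk:resolvent_is_inverse}, or equivalently Proposition~\ref{prop:resolvent-solution}: $\res{\bq}=\emptyword+\res{\bq}\,\bq$. Projecting this identity by $\cdot\proj{i}$ and using $\emptyword\proj{i}=0$ yields
\begin{align}
\res{\bq}\proj{i} &= (\res{\bq}\,\bq)\proj{i}.
\end{align}
Now decompose $\bq=\bq^\emptyword\emptyword+\sum_{\word{j}\in\alphabet}\bq\proj{j}\word{j}$, so that $\res{\bq}\,\bq=\bq^\emptyword\res{\bq}+\sum_{\word{j}\in\alphabet}\res{\bq}(\bq\proj{j})\word{j}$, and apply $\cdot\proj{i}$: the term $\bq^\emptyword\res{\bq}$ contributes $\bq^\emptyword\,\res{\bq}\proj{i}$, while the sum contributes only the $\word{j}=\word{i}$ term, namely $\res{\bq}(\bq\proj{i})$. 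Hence
\begin{align}
\res{\bq}\proj{i} &= \bq^\emptyword\,\res{\bq}\proj{i} + \res{\bq}(\bq\proj{i}),
\end{align}
and since $\abs{\bq^\emptyword}<1$ this rearranges to $\res{\bq}\proj{i}=\frac{1}{1-\bq^\emptyword}\res{\bq}(\bq\proj{i})$.

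The second step handles the left factor $\bp$. Write $\bell:=\bp\,\res{\bq}$ so that $\bell=\bp+\bell\,\bq$ (this is exactly Proposition~\ref{prop:resolvent-solution}). Projecting by $\cdot\proj{i}$ gives $\bell\proj{i}=\bp\proj{i}+(\bell\,\bq)\proj{i}$, and repeating the decomposition of $\bq$ exactly as above yields $(\bell\,\bq)\proj{i}=\bq^\emptyword\,\bell\proj{i}+\bell(\bq\proj{i})$. Therefore $\bell\proj{i}=\bp\proj{i}+\bq^\emptyword\,\bell\proj{i}+\bell(\bq\proj{i})$, and solving for $\bell\proj{i}$ using $\abs{\bq^\emptyword}<1$ gives precisely
\begin{align}
\bra{\bp\res{\bq}}\proj{i}=\frac{1}{1-\bq^\emptyword}\sqbra{\bp\proj{i}+\bp\res{\bq}(\bq\proj{i})},
\end{align}
which is the claim. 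I do not anticipate a genuine obstacle here: the only point requiring a little care is the bookkeeping for the projection of a concatenation — one must check that $(\bm{m}\bm{n})\proj{i}$ splits as $\bm{m}^\emptyword\,\bm{n}\proj{i}+(\text{terms from }\bm{m}\proj{j})$ correctly, i.e. that when $\bm{m}$ itself may carry lower-order words the last letter of any word in $\bm{m}\bm{n}$ comes either from $\bm{n}$ (when $\bm{n}\ne\emptyword$ in that summand) or from $\bm{m}$ (when the $\bm{n}$-part is the empty word), and in the computations above the relevant $\bm{n}$ is $\bq$ whose $\emptyword$-part has been split off explicitly, so the formula $(\bell\,\bq)\proj{i}=\bq^\emptyword\bell\proj{i}+\bell(\bq\proj{i})$ is exact. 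Everything else is a one-line rearrangement using $\abs{\bq^\emptyword}<1$, which guarantees $1-\bq^\emptyword\ne 0$ and that all series involved converge by Lemma~\ref{apn:resolvent}.
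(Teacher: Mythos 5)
Your proposal is correct and follows essentially the same route as the paper: both rest on the fixed-point identity $\bell=\bp+\bell\,\bq$ from Proposition~\ref{prop:resolvent-solution}, the decomposition $\bq=\bq^\emptyword\emptyword+\sum_{\word{j}}\bq\proj{j}\word{j}$ of Remark~\ref{rem:proj-decomposition}, and the rule $(\bm{a}\word{j})\proj{i}=\bm{a}\,\delta_{ij}$, differing only in that the paper solves for $\res{\bq}$ before multiplying by $\bp$ and projecting, whereas you project the fixed-point equation directly and then solve for $\bell\proj{i}$. Your preliminary computation of $\res{\bq}\proj{i}$ is redundant for the final claim but harmless.
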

    
    \begin{proof}
        First recall the decomposition in Remark~\ref{rem:proj-decomposition}, i.e.
        $$ \bq = \bq^\emptyword \emptyword + \sum_{\word{i} \in \alphabet} \bq \proj{i} \word{i}, $$
        and Proposition~\ref{prop:resolvent-solution}, i.e.
        $$ \res{\bq} = \emptyword + \res{\bq} \bq. $$
        One can then easily get that
        \begin{align}
            \res{\bq} &
            = \emptyword + \res{\bq} \bra{\bq^\emptyword \emptyword + \sum_{\word{i} \in \alphabet} \bq \proj{i} \word{i}}
            = \frac{1}{1 - \bq^\emptyword} \bra{\emptyword + \res{\bq} \sum_{\word{i} \in \alphabet} \bq \proj{i} \word{i}}.
        \end{align}

        It then follows that 
        \begin{align}
            \bp \res{\bq} &
            = \frac{1}{1 - \bq^\emptyword} \bra{\bp + \bp \res{\bq} \sum_{\word{i} \in \alphabet} \bq \proj{i} \word{i}}.
        \end{align}
        
        Taking the projection ends the proof.
    \end{proof}

\section{Proof of Proposition~\ref{prop:uniform_bound_L2}} \label{app:gen_results}

    In this section, we are specifically interested in the case $X_t = \widehat{W}_t = (t, W_t^2, \dots, W_t^d)$ where $W = (W^2, \dots, W^d)$ is a ($d-1$)-dimensional Brownian motion. We will denote $n(\wv)$ the size of $\wv$, i.e. its number of letters, and $x(\wv)$ the number of $\word{1}$s specifically. If it does not cause ambiguity, we will write $n$ for $n(\wv)$ and $x$ for $x(\wv)$. \\
    
    Let $g^p: [0, T] \times V \to \R$ be defined for all even $p \in \N^*$ by
    \begin{align}
        g_t^p(\wv) = \sqbra{\prod_{j=2}^p \binom{jn}{n}} \frac{t^{\frac{p}{2} (n+x)}}{\bra{\frac{p}{2}(n+x)}!} 2^{\frac{p}{2} (x-n)}
    \end{align}
    and for all odd $p \in \N$ by 
    \begin{align}
        g_t^p(\wv) = \left( g_t^{p+1}(\wv) \right)^{\frac{p}{p+1}},
    \end{align}
    for all $t \in [0, T]$ and all $\wv \in V$. Notice that $g_t^p(\wv)$ only depends on $\wv$ through its size and the number of its $\word{1}$s, i.e. $n(\wv)$ and $x(\wv)$.

    \begin{proposition} \label{prop:uniform_boundary}
        Fix  $t \in [0, T]$. We have
        \begin{align}
            \E \sqbra{\abs{\bracketsig{\wv}}^p} \leq g_t^p(\wv), \quad \wv \in V, \quad p \in \N.
        \end{align}
    \end{proposition}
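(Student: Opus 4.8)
The plan is to prove the bound $\E|\bracketsig{\wv}|^p \le g_t^p(\wv)$ by induction on the length $n = n(\wv)$ of the word, treating the even exponents $p$ first and then deducing the odd case by Jensen (Lyapunov) inequality. For the base case $n = 0$, the word is the empty word, $\bracketsig{\emptyword} = 1$, and $g_t^0(\wv)$ equals $1$ by the empty-product convention, so the inequality holds. For the inductive step with $p$ even, write $\wv = \wu\word{k}$ where $\wu \in V_{n-1}$ and $\word{k}$ is the last letter, and use the iterated-integral representation from Remark~\ref{rmk:sig_iteration_def}, namely $\sig[t]^\wv = \int_0^t \sig[s]^\wu \circ d\widehat{W}_s^\word{k}$. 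The key is to split into two cases according to whether $\word{k} = \word{1}$ (integration against $ds$, a finite-variation term) or $\word{k} \in \set{\word{2},\dots,\word{d}}$ (integration against $dW^\word{k}$, a genuine stochastic term), since the Stratonovich and Itô integrals coincide against $dt$ but differ against Brownian motion.

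For the time-integration case $\word{k} = \word{1}$, we have $\sig[t]^\wv = \int_0^t \sig[s]^\wu\,ds$, so by Jensen's inequality $|\sig[t]^\wv|^p \le t^{p-1}\int_0^t |\sig[s]^\wu|^p\,ds$; taking expectations and applying the inductive hypothesis $\E|\sig[s]^\wu|^p \le g_s^p(\wu) = g_t^p(\wu)(s/t)^{\frac p2(n(\wu)+x(\wu))}$ (the time-dependence of $g$ is an explicit power of $t$), the $s$-integral is computed explicitly and one checks that the resulting expression equals $g_t^p(\wv)$, using that $n(\wv) = n(\wu)+1$ and $x(\wv) = x(\wu)+1$ and that the binomial products $\prod_{j=2}^p\binom{jn}{n}$ are monotone in $n$. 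For the Brownian case $\word{k}\in\set{\word{2},\dots,\word{d}}$, first convert the Stratonovich integral to Itô via Lemma~\ref{lem:stratoito}: $\sig[t]^\wv = \int_0^t \sig[s]^\wu\,dW_s^\word{k} + \thalf\int_0^t \sig[s]^{\wu\proj{k}}\,ds$ (only the $\word{k}$-projection survives since $[\widehat W^\word{j},\widehat W^\word{k}] = 0$ for $\word{j}\ne\word{k}$ among the Brownian coordinates and $[\,\cdot\,,W^\word{k}]$ with the time coordinate vanishes). Then estimate the Itô part using the Burkholder--Davis--Gundy inequality (or rather, since we want a sharp constant matching Fawcett's formula, the precise moment bound for iterated Stratonovich integrals — this is exactly where the structure of $g^p$ with the binomial factors $\binom{jn}{n}$ comes from), bound the drift part by the previous (time-integration) case applied to $\wu\proj{k}$, and combine via Minkowski's inequality in $L^p$. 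The recursion on the binomial products is designed precisely so that $\left(\E|\int_0^t \sig[s]^\wu dW_s^\word{k}|^p\right)^{1/p}$ contributes the extra factor $\binom{pn}{n}^{1/p}$ (up to the $t$-power and $2$-power bookkeeping), which telescopes into $\prod_{j=2}^p\binom{jn}{n}$.

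The main obstacle will be obtaining the \emph{sharp} constants in the Brownian case — a crude BDG bound would give an extra universal constant $C_p$ that does not match the clean product form of $g_t^p$. The fix is to use the Stratonovich-to-Itô conversion together with an inductive moment identity for iterated integrals (essentially unwinding Fawcett's exact formula $\E[\sig[t]\otimes\sig[t]]$ for the symmetrized second moment and its higher analogues), so that the moments are controlled by a \emph{deterministic} recursion rather than a probabilistic inequality. Concretely, I expect the cleanest route is: (i) prove the even-$p$ case by establishing, via the iterated representation and repeated application of the conditional Jensen/Itô-isometry-type bounds, that the $p$-th moments satisfy the same recursion in $n$ that $g_t^p$ satisfies by construction; (ii) handle the mixed finite-variation/martingale decomposition carefully with Minkowski so no stray constants appear; (iii) deduce the odd-$p$ case immediately from $\E|Z|^p = \E(|Z|^{p+1})^{p/(p+1)} \le (\E|Z|^{p+1})^{p/(p+1)} \le (g_t^{p+1}(\wv))^{p/(p+1)} = g_t^p(\wv)$ by Jensen. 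The special case $p = 2$ then recovers \eqref{eq:uniform_bound_L2} in Proposition~\ref{prop:uniform_bound_L2}, and the supremum bound \eqref{eq:uniform_bound_L2_sup} follows by an additional Doob/BDG maximal inequality argument, which is presumably carried out separately in the appendix.
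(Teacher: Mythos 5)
Your reduction of the odd case to the even case via Jensen is exactly what the paper does, and your instinct that a crude BDG bound cannot produce the constant-free product $\prod_{j=2}^p\binom{jn}{n}$ is correct. But that is precisely where your plan has a genuine gap: the inductive scheme on the word length, with a Stratonovich-to-It\^o conversion, Minkowski in $L^p$, and a moment bound for the martingale part, is left resting on an unproved "inductive moment identity for iterated integrals" that you only gesture at. Two concrete obstructions: (i) for even $p\geq 4$ there is no isometry-type identity for $\E\abs{\int_0^t H_s\,dW_s}^p$, so any probabilistic inequality you substitute (BDG, hypercontractivity) carries a $p$-dependent universal constant that does not telescope into $\prod_{j=2}^{p}\binom{jn}{n}$; (ii) even granting sharp bounds on the martingale and drift parts separately, Minkowski gives $\bigl(\|M\|_p+\|A\|_p\bigr)^p$, whose cross terms must be absorbed into a target $g_t^p(\wv)$ that has no slack built in, and you do not verify this. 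So the even-$p$ case, which is the whole content of the proposition, is not actually established by the plan.

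The missing idea is that no stochastic estimate is needed at all: by the (extended) shuffle property, $\bracketsig{\wv}^{2p}=\bracketsig{\wv\shupow{2p}}$, so the $2p$-th moment is the \emph{first} moment of a single signature element, and Fawcett's formula gives the expectation of every word exactly --- it is zero unless the word is a concatenation of blocks $\word{1}$ and $\word{jj}$, in which case it equals $\tfrac{t^{(n+x)/2}}{((n+x)/2)!}2^{(x-n)/2}$. Since $\wu\shuprod\ww$ for $\wu\in V_k$, $\ww\in V_m$ is a sum of $\binom{k+m}{k}$ words, each of length $k+m$ with $x(\wu)+x(\ww)$ occurrences of $\word{1}$, iterating gives that $\wv\shupow{2p}$ has $\prod_{j=2}^{2p}\binom{jn}{n}$ summands, each of length $2pn$ with $2px$ ones, and bounding each expectation by the maximal Fawcett value yields $g_t^{2p}(\wv)$ in one step, with no induction on the word and no decomposition into drift and martingale parts. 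Your closing remarks about recovering Proposition~\ref{prop:uniform_bound_L2} and the supremum bound are consistent with the paper (the $\sup$ bound is indeed proved separately, by Doob plus the iterated representation), but the core even-moment bound needs the shuffle-power linearization, not the stochastic induction you propose.
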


    \begin{proof}
        We first use Fawcett's formula \cite[Proposition 4.10]{CubatureWienerSpace} to write 
        $$ \E \sqbra{\sig} = \sum_{m=0}^\infty \frac{t^m}{m!} \bra{\word{1} + \half \sum_{\word{j} \in \set{\word{2}, \dots, \word{d}}} \word{jj}} \conpow{m}. $$
        
        We then define for all $n \in \N$
        \begin{equation} \label{eq:sig_basis}
            \widehat{V}_n := \set{\wv \in V_n : \exists j \in \N \text{ such that } \wv = \word{u_1 \cdots u_j} \text{ for some } \word{u_k} \in \set{\word{1}, \word{22}, \dots, \word{dd}} \text{ with } k = 1, 2, \dots, j}.
        \end{equation}
        
        This set corresponds to the coordinate of every signature element with non-zero expectation, i.e.~for every $n \in \N$ and $\wv \in V_n$, 
        \begin{equation}
            \E \sqbra{\bracketsig{\wv}}
            = \begin{cases}
                \frac{t^{\frac{1}{2}(n+x)}}{\left( \frac{1}{2}(n+x) \right)!} \cdot 2^{\frac{1}{2}(x-n)} &
                \text{if } \wv \in \widehat{V}_n,
                \\ 0 &
                \text{else}.
            \end{cases}
        \end{equation}
        
        We are now ready to compute even moments of every signature element. First, remark that for all $\wu \in V_k$ and $\wv \in V_m$, $\wu \shuprod \wv$ is a sum of $\binom{k+m}{k}$ elements $\ww \in V_{k+m}$ of length $n(\ww) = k+m$ each, having exactly $x(\ww) = x(\wu)+x(\wv)$ number of $\word{1}$s, so that 
        $$ \E \sqbra{\bracketsig{\word{u} \shuprod \word{v}}} \leq \binom{k+m}{k} \max_{\substack{\word{w} \in V_{k+m} \\ \textnormal{s.t. } x(\word{w}) = x(\word{u}) + x(\word{v})}} \E \bracketsig{\word{w}}. $$
        
        Consequently, for every $p \in \N$, we can iteratively apply the shuffle product $p$-times, and obtain the claimed upper bound for the even powers of the signature:
        \begin{align}
            \E \left[ \bracketsig{\wv}^{2p} \right] &
            = \E \bracketsig{\wv \shupow{2p}}
            \leq \left[ \prod_{j=2}^{2p} \binom{jn}{n} \right] \max_{\substack{\ww \in V_{2p n} \\ s.t. \: x(\ww) = 2p \: x(\wv)}} \E \bracketsig{\ww}
            = \left[ \prod_{j=2}^{2p} \binom{jn}{n} \right] \frac{t^{p(n+x)}}{(p(n+x))!} 2^{p(x-n)}.
        \end{align}
        
        We can finally treat the odd moments by applying Jensen's inequality on the convex function $y \to y^{\frac{2p}{2p-1}}$ on $\R_+$:
        \begin{align}
            \E \sqbra{\abs{\bracketsig{\wv}}^{2p - 1}}^{\frac{1}{2p - 1}} \leq \E \sqbra{\bracketsig{\wv}^{2p}}^{\frac{1}{2p}} \leq \bra{h_t^{2p}(\wv)}^{\frac{1}{2p}} = \bra{h_t^{2p-1}(\wv)}^{\frac{1}{2p-1}},
        \end{align}
        concluding the proof of the proposition.
    \end{proof}

    We are now ready to prove Proposition~\ref{prop:uniform_bound_L2}:

    \begin{proof}[Proof of Proposition~\ref{prop:uniform_bound_L2}]
        Proposition~\ref{prop:uniform_bound_L2}-\ref{prop:uniform_bound_L2:item1} follows directly from an application of the above Proposition~\ref{prop:uniform_boundary}. In order to prove \eqref{eq:uniform_bound_L2_sup} we will consider two cases: either when word $\wv$ ends with a $\word{1}$ or it ends with a $\word{j}$ in $\set{\word{2}, \dots, \word{d}}$.
    
        \begin{enumerate}[label=\textbf{(\arabic*)}]
            \item
            \textbf{Case $\wv = \word{u1}$ for some word $\wu \in V$.}
            We apply Cauchy-Schwarz inequality to get 
            \begin{align}
                \E \sqbra{\sup_{s \in [0, t]} \bracketsig[s]{\wv}^2} &
                = \E \sqbra{\sup_{s \in [0, t]} \bra{\int_0^s \bracketsig[u]{\wu} \d u}^2}
                \\ &
                \leq \E \sqbra{\bra{\int_0^t \abs{\bracketsig[u]{\wu}} \d u}^2}
                \\ &
                \leq \E \sqbra{t \int_0^t \bracketsig[u]{\wu}^2 \d u}
                \\ &
                = t \int_0^t \E \sqbra{\bracketsig[u]{\wu}^2} \d u.
            \end{align}
            
            Thus, using \eqref{eq:uniform_bound_L2} with $n = n(\wv) = n(\wu) + 1$ and $x = x(\wv) = x(\wu) + 1$, we have
            \begin{align} \label{eq:bound_part_1}
                \E \sqbra{\sup_{s \in [0, t]} \bracketsig[s]{\word{v}}^2} &
                \leq \binom{2(n-1)}{n-1} \frac{t}{2^{n-x} (n+x-2)!} \int_0^t u^{n+x-2} \d u
                \\ &
                = \binom{2(n-1)}{n-1} \frac{t^{n+x}}{2^{n-x} (n+x-1)!}
                \\ &
                \leq \frac{2^{n+x}}{\sqrt{n}} \frac{t^{n+x}}{(n+x-1)!}, \label{eq:apn:bound_for_Esup_u1}
            \end{align}
            where the last inequality comes from an application of Stirling's approximation that reads $\binom{2m}{m} \leq \frac{2^{2m}}{\sqrt{m+1}}$ for all $m \in \N$.
            
            \item
            \textbf{Case $\wv = \word{uj}$ for some word $\wu \in V$ and some letter $\word{j} \in \set{\word{2}, \dots, \word{d}}$.} First we apply Theorem~\ref{thm:sig_ito_formula} to get
            \begin{align}
                \bracketsig[s]{\wv}^2 &
                = \bra{\int_0^s \bracketsig[u]{\wu} \d W_u^\word{j} + \half \int_0^s \bracketsig[u]{\wu \proj{j}} \d u}^2
                \\ &
                \leq 2 \bra{\int_0^s \bracketsig[u]{\wu} \d W_u^\word{j}}^2 + \half \bra{\int_0^s \bracketsig[u]{\wu \proj{j}} \d u}^2. \label{eq:apn:item:v=u2:1st_decompo}
            \end{align}
            
            Now, since $\int_0^s \bracketsig[u]{\wu} dW_u^\word{j}$ is a true martingale, we apply Doob's inequality followed by the previous result \eqref{eq:apn:bound_for_Esup_u1} with $n = n(\wv) = n(\wu) + 1$ and $x = x(\wv) = x(\wu)$ and get
            \begin{align}
                \E \sqbra{\sup_{s \in [0, t]} \bra{\int_0^s \bracketsig[u]{\wu} \d W_u^\word{j}}^2} 
                \leq 4~\E \sqbra{\int_0^t \bracketsig[u]{\wu} \d W_u^\word{j}}^2
                = 4 \int_0^t \E \sqbra{\bracketsig[u]{\wu}^2} \d u
                \leq 4 \frac{2^{n+x}}{\sqrt{n}} \frac{t^{n+x}}{(n+x)!}.
            \end{align}
            Then, again applying \eqref{eq:apn:bound_for_Esup_u1} to the right-most element in \eqref{eq:apn:item:v=u2:1st_decompo}, we get
            \begin{align}
                \E \sqbra{\sup_{s \in [0, t]} \bra{\int_0^s \bracketsig[u]{\wu \proj{j}} \d u}^2} 
                = \E \sqbra{\sup_{s \in [0, t]} \bracketsig[s]{\wu \proj{j} \word{1}}^2}
                \leq \frac{2^{n+x}}{\sqrt{n}} \frac{t^{n+x}}{(n+x-1)!}.
            \end{align}
            Note that if $\wu \proj{j} = 0$, the left-hand side is 0 and the equation still holds. \\
            
            Combining the above inequalities, we derive
            \begin{align}
                \E \sqbra{\sup_{s \in [0, t]} \bracketsig[s]{\wv}^2} &
                \leq C_1 \frac{2^{n+x-1}}{\sqrt{n}} \frac{t^{n+x}}{(n+x-1)!}, \label{eq:apn:bound_for_Esup_u2}
            \end{align}
            for some $C_1 \in \R$ independent of $t$ and $\wv$.
        \end{enumerate}
        
        Combining the two equalities \eqref{eq:apn:bound_for_Esup_u1}-\eqref{eq:apn:bound_for_Esup_u2} above, we have that for every $\wv \in V_n$, $n \geq 1$, 
        \begin{equation}
            \E \sqbra{\sup_{s \in [0, t]} \abs{\bracketsig[s]{\wv}}^2}
            \leq C_2 \frac{2^{n+x-1}}{\sqrt{n}} \frac{t^{n+x}}{(n+x-1)!}.
        \end{equation}
        for some $C_2 \in \R$ independent of $t$ and $\wv$. \\
        
        Moreover, as $n+1 \sim n$, there exists a constant $C$ such that
        \begin{equation} \label{eq:Appendix_B_last}
            \E \sqbra{\sup_{s \in [0, t]} \abs{\bracketsig[s]{\wv}}^2}
            \leq C \frac{2^{n+x-1}}{\sqrt{n+1}} \frac{t^{n+x}}{(n+x-1)!}.
        \end{equation}
        This includes the case $\wv = \emptyword \in V_0$ and proves \eqref{eq:uniform_bound_L2_sup}. 
        
        It remains to prove the sub-multiplicative property \eqref{eq:prop:h-uv}.
        First, in the case $\wu = \emptyword$ or $\ww = \emptyword$, \eqref{eq:prop:h-uv} holds trivially. Now we can easily see that
        \begin{align} \label{eq:proof_new_h}
            \bra{\frac{h_t(\wu) h_t(\wv)}{h_t(\wu \wv)}}^2 &
            = C^2 A(\wu, \wv) B(\wu, \wv),
        \end{align}
        where 
        \begin{align}
            A(\wu, \wv) :
            = \frac{(n(\wu) + x(\wu) + n(\wv) + x(\wv) - 1)!}{(n(\wu) + x(\wu) - 1)! (n(\wv) + x(\wv) - 1)!} \quad 
        \text{and} \quad 
            B(\wu, \wv) :
            = \sqrt{\frac{n(\wu) + n(\wv) + 1}{(n(\wu) + 1) (n(\wv) + 1)}}.
        \end{align}
        
        We now only need to show that $C^2 A(\wu, \wv) B(\wu, \wv) \geq 1$ for $\wu, \wv \neq \emptyword$. Without loss of generality, we assume $\wu \neq \emptyword$, i.e. $n(\wu) > 0$, otherwise we could simply swap $\wu$ and $\wv$ in the following equations. Let us then remark that        
        \begin{align}
            A(\wu, \wv) &
            = \binom{n(\wu) + x(\wu) + n(\wv) + x(\wv) - 1}{n(\wu) + x(\wu)} (n(\wu) + x(\wu))
        \end{align}
        and that 
        \begin{align}
            \sqrt{n(\wu) + 1} &
            \leq C^2 (n(\wu) + x(\wu))
            \\ \sqrt{n(\wv) + 1} \sqrt{n(\wu) + 1} &
            \leq C^2 \sqrt{n(\wv) + 1} (n(\wu) + x(\wu))
            \leq C^2 \sqrt{n(\wu) + n(\wv) + 1} (n(\wu) + x(\wu)),
        \end{align}
        which implies
        \begin{align}
            B(\wu, \wv) \geq \frac{1}{C^2 (n(\wu) + x(\wu))}
        \end{align}
        and thus
        \begin{align}
            \bra{\frac{h_t(\wu) h_t(\wv)}{h_t(\wu \wv)}}^2 &
            \geq \binom{n(\wu) + x(\wu)) + n(\wv) + x(\wv) - 1}{n(\wu) + x(\wu)} \geq 1.
        \end{align}
    \end{proof}

\section{Proof of Proposition~\ref{prop:shuffle_property}} \label{A:proofshuffle}

    Let $n \leq k \in \N$, then for all $\wu \in V_n$, $\wv \in V_{k-n}$, each element of $\wu \shuprod \wv$ is of size $k$. This leads to the fact that $\bracket{\wu \shuprod \wv}{\ww} = 0$ whenever $\ww \notin V_k$, i.e.
    \begin{align} \label{eq:first_observation}
        \bracketsigX{\wu \shuprod \wv} = \sum_{\ww \in V_k} \bracket{\wu \shuprod \wv}{\ww} \sigX^\ww.
    \end{align}

    Moreover, by bilinearity of the shuffle product and for $\bell, \bphi \in \eTA$,
    \begin{align}
        \langle \bell \shuprod \bphi, \ww \rangle &
        = \sum_{m=0}^\infty \sum_{n=0}^\infty \sum_{\wu \in V_n} \sum_{\wv \in V_m} \bell^\wu \bphi^\wv \langle \wu \shuprod \wv, \ww \rangle
       = \sum_{n=0}^k \sum_{\wu \in V_n} \sum_{\wv \in V_{k-n}} \bell^\wu \bphi^\wv \langle \wu \shuprod \wv, \ww \rangle.
    \end{align}
    
    Hence, it is easy to see that for all $k \in \N$
    \begin{align}
        \sum_{\ww \in V_k} \bracket{\bell \shuprod \bphi}{\ww} \sigX^\ww = \sum_{n=0}^k \sum_{\wu \in V_n} \sum_{\wv \in V_{k-n}} \bell^\wu \bphi^\wv \bracketsigX{\wu \shuprod \wv}.
    \end{align}

    It then follows that the semi-norm of $\bell \shuprod \bphi$ is of the form
    \begin{align*}
        \normA{\bell \shuprod \bphi} &
        = \sum_{k=0}^\infty \abs{\sum_{\ww \in V_k} \bracket{\bell \shuprod \bphi}{\ww} \sigX^\ww}
        \\ &
        = \sum_{k=0}^\infty \abs{\sum_{n=0}^k \sum_{\wu \in V_n} \sum_{\wv \in V_{k-n}} \bell^\wu \bphi^\wv \bracketsigX{\wu \shuprod \wv}}
        \\ &
        \leq \sum_{k=0}^\infty \sum_{n=0}^k \abs{\sum_{\wu \in V_n} \sum_{\wv \in V_{k-n}} \bell^\wu \bphi^\wv \bracketsigX{\wu \shuprod \wv}}.
    \end{align*}
    
    Finally, using the shuffle product property on the right-hand side, recall Lemma~\ref{lem:shuffleproperty}, the semi-norm can be bounded
    \begin{align*}
        \normA{\bell \shuprod \bphi} &
        \leq \sum_{k=0}^\infty \sum_{n=0}^k \abs{\sum_{\wu \in V_n} \sum_{\wv \in V_{k-n}} \bell^\wu \bphi^\wv \sigX^\wu \sigX^\wv}
        \leq \sum_{m=0}^\infty \sum_{n=0}^\infty \abs{\sum_{\wu \in V_n} \bell^\wu \sigX^\wu \sum_{\wv \in V_m} \bphi^\wv \sigX^\wv}
        \leq \normA{\bell} \cdot \normA{\bphi}
        < \infty.
    \end{align*}
    
    It can then be argued very similarly that $\bracketsigX{\bell} \bracketsigX{\bphi} = \bracketsigX{\bell \shuprod \bphi}$, ending the proof.

\bibliographystyle{plainnat}
\bibliography{lin_sig.bib}

\end{document}